\theoremstyle{plain}
\newtheorem{theorem}{Theorem}[section]
\newtheorem{corollary}[theorem]{Corollary}
\newtheorem{lemma}[theorem]{Lemma}
\newtheorem{proposition}[theorem]{Proposition}
\theoremstyle{definition}
\newtheorem*{acknowledgement}{Acknowledgement}
\newtheorem{claim}{Claim}
\numberwithin{equation}{section}
\begin{document}
\title[Alternating sign multibump solutions]{Alternating sign multibump solutions of nonlinear elliptic equations in
expanding tubular domains}
\author{Nils Ackermann}
\address{Instituto de Matem\'{a}ticas, Universidad Nacional Aut\'{o}noma de M\'{e}xico,
Circuito Exterior, C.U., 04510 M\'{e}xico D.F., Mexico.\smallskip}
\author{M\'{o}nica Clapp}
\address{Instituto de Matem\'{a}ticas, Universidad Nacional Aut\'{o}noma de M\'{e}xico,
Circuito Exterior, C.U., 04510 M\'{e}xico D.F., Mexico.\smallskip}
\author{Filomena Pacella}
\address{Dipartimento di Matematica, Universit\`{a} "La Sapienza" di Roma, P.le A. Moro
2, 00185 Roma, Italia}
\thanks{This research was partially supported by CONACYT grant 129847 and
PAPIIT-DGAPA-UNAM grants IN101209 and IN106612 (Mexico), and by exchange funds
of the Universit\`{a} \textquotedblleft La Sapienza\textquotedblright\ di Roma (Italy).}
\date{October, 2012}

\begin{abstract}
Let $\Gamma$ denote a smooth simple curve in $\mathbb{R}^{N}$, $N\geq2$,
possibly with boundary. Let $\Omega_{R}$ be the open normal tubular
neighborhood of radius $1$ of the expanded curve $R\Gamma:=\{Rx\mid x\in
\Gamma\smallsetminus\partial\Gamma\}$. Consider the superlinear problem
$-\Delta u+\lambda u=f(u)$ on the domains $\Omega_{R}$, as $R\rightarrow
\infty$, with homogeneous Dirichlet boundary condition. We prove the existence
of multibump solutions with bumps lined up along $R\Gamma$ with alternating
signs. The function $f$ is superlinear at $0$ and at $\infty$, but it is not
assumed to be odd.

If the boundary of the curve is nonempty our results give examples of
contractible domains in which the problem has multiple sign changing solutions.

\end{abstract}
\maketitle

\section{Introduction}

\label{sec:introduction}

Let $\gamma\in C^{3}([0,1],\mathbb{R}^{N})$, $N\geq2$, be a curve without
self-intersections except possibly for $\gamma(0)=\gamma(1)$. In this case we
also assume that $\dot{\gamma}(0)=\dot{\gamma}(1)$. For $R>0$ define
\begin{equation}
\Omega_{R}:=\text{int}{\bigcup_{t\in\lbrack0,1]}}\{R\gamma(t)+v\mid
v\in\mathbb{R}^{N},\ \lvert v\rvert<1,\ \dot{\gamma}(t)\cdot v=0\},
\label{eq:72}
\end{equation}
where int$(X)$ denotes the interior of $X$ in $\mathbb{R}^{N}$. Thus, for $R$
large enough, $\Omega_{R}$ is the tubular neighborhood of radius $1$ of the
$1$-dimensional submanifold $\Gamma_{R}$ of $\mathbb{R}^{N}$ defined as
\begin{equation*}
\Gamma_{R}:=
\begin{cases}
\{R\gamma(t)\mid t\in\lbrack0,1]\}, & \qquad\text{if }\gamma(0)=\gamma(1),\\
\{R\gamma(t)\mid t\in(0,1)\}, & \qquad\text{if }\gamma(0)\neq\gamma(1).
\end{cases}
\end{equation*}
We are interested in finding solutions to the problem
\begin{equation}
\left\{
\begin{array}
[c]{ll}
-\Delta u+\lambda u=f(u) & \text{in }\Omega_{R},\\
u=0 & \text{on }\partial\Omega_{R},
\end{array}
\right.  \label{eq:1}
\end{equation}
for $R$ large enough.

Let $\lambda_{1,1}$ be the first eigenvalue of the Laplace operator $-\Delta$
in the unit ball in $\mathbb{R}^{N-1}$ with Dirichlet boundary conditions. Set
$p_{S}:=\infty$ if $N=1,2$ and $p_{S}:=(N+2)/(N-2)$ if $N\geq3$. We make the
following assumptions:

\begin{enumerate}
\item[(H1) ] $\lambda>-\lambda_{1,1}$.

\item[(H2) ] $f\in C^{1}(\mathbb{R})\cap C^{3}(\mathbb{R}\backslash\{0\})$.

\item[(H3)] There are $C>0$ and $p_{1},p_{2}\in(1,p_{S})$ such that $p_{1}\leq
p_{2}$ and
\begin{equation*}
\lvert f^{(k)}(u)\rvert\leq C(\lvert u\rvert^{p_{1}-k}+\lvert u\rvert
^{p_{2}-k})
\end{equation*}
for $k\in\{0,1,2,3\}$ and $u\neq0$.

\item[(H4)] $f(u)u>0$ for all $u\neq0$.
\end{enumerate}

Note that
\begin{equation}
f(0)=f^{\prime}(0)=0. \label{eq:50}
\end{equation}

For example, the standard nonlinearity $f(u):=\lvert u\rvert^{p-1}u$ satisfies
(H1)-(H4) if $p\in(1,p_{S})$.

We write a point in $\mathbb{R}^{N}$ as $(\xi,\eta)$, with $\xi\in\mathbb{R}$
and $\eta\in\mathbb{R}^{N-1}$, and denote the cylinder in $\mathbb{R}^{N}$ of
radius $1$ around the $\xi$-axis by
\begin{equation*}
\mathbb{L}:=\{(\xi,\eta)\in\mathbb{R}^{N}\mid\lvert\eta\rvert<1\}.
\end{equation*}
Locally, $\mathbb{L}$ is the limit domain of $\Omega_{R}$ as $R\rightarrow
\infty$. So we consider the limit problem
\begin{equation}
\left\{
\begin{array}
[c]{l}
-\Delta u+\lambda u=f(u),\\
u\in H_{0}^{1}(\mathbb{L)}.
\end{array}
\right.  \label{eq:2}
\end{equation}
By Lemma~\ref{lem:schroed-invertible} below, the operator $-\Delta+\lambda$
with Dirichlet boundary conditions in $L^{2}(\mathbb{L})$ has a positive
spectrum. If $f$ satisfies an Ambrosetti-Rabinowitz type condition the
mountain pass theorem, together with the translation invariance in the $\xi
$-direction and concentration compactness, yields a positive and a negative
solution to \eqref{eq:2}, having minimal energy in their respective cones. We
add the following assumption:

\begin{enumerate}
\item[(H5)] Problem \eqref{eq:2} has a positive solution $U^{+}$ and a
negative solution $U^{-}$ which are nondegenerate, in the sense that the
solution space of the linearized problem
\begin{equation*}
-\Delta u+\lambda u=f^{\prime}(U^{\pm})u,\qquad u\in H_{0}^{1}(\mathbb{L}),
\end{equation*}
has dimension one.
\end{enumerate}

Note that the solution space of the linearized problem\ must have at least
dimension one, due to the invariance under translations. Hypothesis (H5)
requires that these are the only elements in the kernel of the linearization.
This condition is not easy to check, even for the standard nonlinearity
$f(u):=u^{p}.$ For this $f,$ Dancer showed in \cite{MR1962054} that (H5) holds true
either for $\lambda=0$ and almost every $p\in(1,p_{S}),$ or for almost every
$\lambda\in(0,\infty)$ and every $p\in(1,p_{S}).$

By \cite[Theorem~1.2]{MR1395408} the solutions $U^{\pm}$ are radially symmetric in
$\eta$ and decreasing in $\lvert\eta\rvert$. Moreover, by \cite[Theorem~6.2]
{MR1039342}, after a translation in the $\xi$-direction, we may assume that they are
also even in $\xi$ and decreasing in $\left\vert \xi\right\vert $. It follows
that they have a unique extremal point at $0$. We extend $U^{\pm}$ to all of
$\mathbb{R}^{N}$ by setting them as $0$ outside of $\mathbb{L}$.

For each $x\in\Gamma_{R}$ we choose a linear isometry $A_{x}$ which maps the
tangent space of $\Gamma_{R}$ at $x$ onto $\mathbb{R}\times\{0\}$ and its
orthogonal complement onto $\{0\}\times\mathbb{R}^{N-1}$, and we define
\begin{equation}
U_{x,R}^{\pm}(y):=U^{\pm}(A_{x}(y-x))\qquad\text{for all }y\in\mathbb{R}^{N}.
\label{eq:3}
\end{equation}
Since $U^{\pm}$ is radially symmetric in $\xi$ and in $\eta$, the function
$U_{x,R}^{\pm}$ is independent of the choice of $A_{x}$.

The parametrization $\gamma$ induces an orientation on $\Gamma_{R}$ which
allows to give an order to every finite set of points in $\Gamma_{R}.$ We
shall say that $(x_{1},\ldots,x_{n})\in\left(  \Gamma_{R}\right)  ^{n}$ is an
$n$-\emph{chain} in $\Gamma_{R}$ if there exist $0\leq t_{1}<t_{2}
<\cdots<t_{n}<1$ such that
\begin{equation}
x_{i}=R\gamma(t_{i})\qquad\text{for }i=1,2,\ldots,n. \label{eq:33}
\end{equation}
If $\gamma(0)=\gamma(1)$ a circular shift $(x_{i},\ldots,x_{n},x_{1}
,\ldots,x_{i-1})$ of an $n$-chain will also be called an $n$-chain. We shall
prove the following results.

\begin{theorem}
\label{thm:no-boundary} Assume that $\gamma(0)=\gamma(1)$. Suppose also that
\emph{(H1)-(H5)} hold. For each $k\in\mathbb{N}$ there exists $R_{k}>0$ such
that for every $R\geq R_{k}$ there are a $2k$-chain $(x_{R,1},x_{R,2}
,\ldots,x_{R,2k})\in\left(  \Gamma_{R}\right)  ^{2k}$ and a solution $u_{R}$
of \eqref{eq:1} such that
\begin{equation}
u_{R}=\sum_{i=1}^{k}(U_{x_{R,2i-1},R}^{+}+U_{x_{R,2i},R}^{-})+o(1)
\label{eq:4}
\end{equation}
in $H^{1}(\mathbb{R}^{N})$ as $R\rightarrow\infty$. Moreover, $\lvert
x_{R,i}-x_{R,j}\rvert\rightarrow\infty$ as $R\rightarrow\infty$, if $i\neq j$.
\end{theorem}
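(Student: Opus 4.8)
The plan is a Lyapunov--Schmidt finite dimensional reduction anchored at the nondegenerate limit bumps $U^{\pm}$, exploiting the periodicity forced by $\gamma(0)=\gamma(1)$. First I would straighten the tube: for $R$ large, normal (Fermi) coordinates along the closed curve $\Gamma_{R}$ provide a diffeomorphism $\Theta_{R}$ of $\Omega_{R}$ onto the straight periodic cylinder $\mathbb{C}_{R}:=(\mathbb{R}/\ell_{R}\mathbb{Z})\times\{\eta\in\mathbb{R}^{N-1}:|\eta|<1\}$, with $\ell_{R}\asymp R\to\infty$, carrying $\partial\Omega_{R}$ onto the lateral boundary. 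Pulling \eqref{eq:1} back through $\Theta_{R}$ replaces $-\Delta+\lambda$ by a uniformly elliptic operator whose coefficients converge, in $C^{1}$ and uniformly on $\mathbb{C}_{R}$, to those of $-\Delta+\lambda$ as $R\to\infty$, because the curvature of $\Gamma_{R}$ is $O(1/R)$. By (H1), Lemma~\ref{lem:schroed-invertible} and the Poincar\'{e} inequality in the $\eta$--slice the corresponding quadratic form on $H_{0}^{1}(\mathbb{C}_{R})$ is positive definite, so by (H2)--(H3) the transported energy $J_{R}$ is of class $C^{2}$ on $H_{0}^{1}(\mathbb{C}_{R})$ and its critical points are exactly the solutions of \eqref{eq:1}.

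Next I would set up the reduction. A $2k$-chain $(x_{1},\dots,x_{2k})$ in $\Gamma_{R}$ is encoded by the cyclically ordered arc-length positions $s=(s_{1},\dots,s_{2k})$ in $\mathbb{R}/\ell_{R}\mathbb{Z}$ of the points $\Theta_{R}(x_{i})$, and the approximate solution is
\[
W_{s}(\xi,\eta):=\sum_{i=1}^{k}\bigl(U^{+}(\xi-s_{2i-1},\eta)+U^{-}(\xi-s_{2i},\eta)\bigr)\in H_{0}^{1}(\mathbb{C}_{R}),
\]
the (suitably periodized) $\xi$--translates of the even, radially decreasing, exponentially decaying, zero-extended profiles $U^{\pm}$. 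By (H5) the limit linearizations $L^{\pm}:=-\Delta+\lambda-f^{\prime}(U^{\pm})$ on $H_{0}^{1}(\mathbb{L})$ have one-dimensional kernels $\mathbb{R}\,\partial_{\xi}U^{\pm}$, and since $f^{\prime}(U^{\pm})\to0$ at infinity and the essential spectrum of $-\Delta+\lambda$ on $\mathbb{L}$ equals $[\lambda+\lambda_{1,1},\infty)\subset(0,\infty)$, the eigenvalue $0$ of $L^{\pm}$ is isolated with a fixed spectral gap. Writing $K_{s}$ for the $2k$--dimensional span of the translates $\partial_{\xi}U^{+}(\xi-s_{2i-1},\eta)$, $\partial_{\xi}U^{-}(\xi-s_{2i},\eta)$ and $P_{s}^{\perp}$ for the $H^{1}$--orthogonal projection onto $K_{s}^{\perp}$, I look for $u=W_{s}+\phi$ with $\phi\in K_{s}^{\perp}$, splitting $J_{R}^{\prime}(W_{s}+\phi)=0$ into the auxiliary equation $P_{s}^{\perp}J_{R}^{\prime}(W_{s}+\phi)=0$ and the bifurcation equation $(I-P_{s}^{\perp})J_{R}^{\prime}(W_{s}+\phi)=0$. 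Fixing a large constant $\rho_{0}=\rho_{0}(k)$ and restricting to configurations with $\mu(s):=\min_{i}\operatorname{dist}(s_{i},s_{i+1})\ge\rho_{0}$ (indices mod $2k$), one checks that the error $\|P_{s}^{\perp}J_{R}^{\prime}(W_{s})\|$ is $O(1/R)+O(e^{-c\rho_{0}})$ (coefficient perturbation plus bump interaction), that the restricted linearization $P_{s}^{\perp}J_{R}^{\prime\prime}(W_{s})|_{K_{s}^{\perp}}$ is, in operator norm, an $O(1/R)+O(e^{-c\rho_{0}})$ perturbation of $\bigoplus_{i}L^{\pm}|_{(\ker L^{\pm})^{\perp}}$, hence uniformly invertible once $\rho_{0}$ and $R$ are large --- this is where (H5) enters decisively --- and that (H2)--(H3) give a uniform Lipschitz bound on the quadratic remainder. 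The contraction mapping principle then yields, for $R$ large, a unique small $C^{1}$ family $\phi_{s}=\phi_{s,R}\in K_{s}^{\perp}$ solving the auxiliary equation, with $\|\phi_{s}\|\le C\bigl(1/R+\sum_{i}e^{-c\operatorname{dist}(s_{i},s_{i+1})}\bigr)$.

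The reduced functional $\mathcal{J}_{R}(s):=J_{R}(W_{s}+\phi_{s})$ is then continuous on the compact set $\{\mu(s)\ge\rho_{0}\}$ and $C^{1}$ near it, and by the standard argument (again through (H5) and the bifurcation equation) every interior critical point $s_{R}$ produces a solution $u_{R}=W_{s_{R}}+\phi_{s_{R}}$ of \eqref{eq:1}; since $\Theta_{R}\to\mathrm{id}$ locally and $\|\phi_{s_{R}}\|\to0$ there, transporting back through $\Theta_{R}$ and extending by zero gives \eqref{eq:4} with $x_{R,i}$ the $i$-th node. So the heart of the matter is to exhibit an interior critical point. An expansion gives
\[
\mathcal{J}_{R}(s)=k(c^{+}+c^{-})+\sum_{i=1}^{2k}\Psi\bigl(\operatorname{dist}(s_{i},s_{i+1})\bigr)+G_{R}(s),
\]
where $c^{\pm}$ are the energies of $U^{\pm}$ for the limit problem \eqref{eq:2}, the leading two-bump interaction $\Psi(d)=-\int_{\mathbb{L}}f\bigl(U^{+}(\xi,\eta)\bigr)\,U^{-}(\xi-d,\eta)\,d\xi\,d\eta$ is \emph{positive} for $d$ large (here (H4) enters, since $f(U^{+})>0$ and $U^{-}<0$), strictly decreasing there and exponentially small, while $\|G_{R}\|_{C^{1}(\{\mu\ge\rho_{0}\})}\to0$ as $R\to\infty$ ($G_{R}$ collects the $O(1/R^{2})$ tube--curvature correction, the contribution of $\phi_{s}$, and the higher order bump interactions). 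Choose $\rho_{0}$ also larger than the (finite) distance beyond which $\Psi$ is strictly decreasing, so that $\Psi>0$ on $[\rho_{0},\infty)$.

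Because $\sum_{i}\operatorname{dist}(s_{i},s_{i+1})=\ell_{R}$ is fixed, on the boundary $\{\mu(s)=\rho_{0}\}$ one has $\mathcal{J}_{R}\ge k(c^{+}+c^{-})+\Psi(\rho_{0})-\|G_{R}\|_{\infty}$, whereas at the equally spaced configuration $\mathcal{J}_{R}=k(c^{+}+c^{-})+2k\Psi(\ell_{R}/2k)+G_{R}=k(c^{+}+c^{-})+o(1)$; since $\Psi(\rho_{0})>0$ is a fixed constant, for $R$ large the minimum of $\mathcal{J}_{R}$ over the compact set $\{\mu\ge\rho_{0}\}$ is attained at an interior point $s_{R}$, which is therefore a critical point and produces the desired solution. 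Finally, its Euler--Lagrange equations take the form $\Psi^{\prime}(\operatorname{dist}(s_{i-1},s_{i}))=\Psi^{\prime}(\operatorname{dist}(s_{i},s_{i+1}))+o(1)$ for all $i$; since $\sum_{i}\operatorname{dist}(s_{i},s_{i+1})=\ell_{R}\to\infty$ and $\Psi^{\prime}<0$ stays bounded away from $0$ on bounded distance ranges above $\rho_{0}$, propagating the bound around the cycle shows that if one gap stayed bounded all of them would, contradicting $\ell_{R}\to\infty$; hence $\operatorname{dist}(s_{i},s_{i+1})\to\infty$ for every $i$, so $|x_{R,i}-x_{R,j}|\to\infty$ whenever $i\neq j$. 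I expect the two main obstacles to be the uniform invertibility of the linearization in the reduction step, which genuinely relies on (H5), and the control of the lower order terms $G_{R}$ in the last step --- in particular the curvature contribution, which is of order $1/R^{2}$ and thus \emph{dominates} $\Psi$ at well-separated configurations --- sharply enough to rule out a boundary minimum and to force the inter-bump distances to diverge.
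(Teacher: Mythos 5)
Your route is genuinely different from the paper's and is viable in outline. The paper never straightens the tube: it stays in $\Omega_R$, replaces each translated profile $U^{\pm}_{x,R}$ by the domain-adapted bump $V^{\pm}_{x,R}$ solving \eqref{V}, builds the manifold $\Sigma_R$ of sums of such bumps, and reduces along the normal bundle of $\Sigma_R$; the domain error then enters as $O(R^{-1/2})$ in the gradient and $O(R^{-2/3})$ in the energy (Lemmas \ref{lem:u-v-est} and \ref{lem:nearness-u-v}), and the configuration set carries an $R$-dependent minimal separation $g_1(R)=\frac{1}{2\mu}\log R$, chosen so that the boundary gain $\beta\mathrm{e}^{-\mu g_1(R)}=\beta R^{-1/2}$ beats $O(R^{-2/3})$ and so that divergence of the mutual distances is built into $\mathcal{U}_{1,R}$. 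Your scheme (Fermi straightening, exact profiles, translation kernels via (H5), a fixed threshold $\rho_0$) buys a cleaner comparison --- a fixed gain $\Psi(\rho_0)>0$ against errors that are either $O(R^{-2})$ or of higher order in $\mathrm{e}^{-\mu\rho_0}$ --- at the price of extra work at the end to show the gaps diverge, plus technicalities you gloss over (periodization of $U^{\pm}$ on $\mathbb{R}/\ell_R\mathbb{Z}$, a periodic normal frame for a closed curve in $\mathbb{R}^N$, the symmetry cancellation behind the $O(R^{-2})$ curvature correction). Both arguments rest on the same pillars: (H5) for invertibility, (H4) for the positive sign of adjacent opposite-sign interactions, and the fact that interaction remainders are superlinear in $\mathrm{e}^{-\mu d}$ (the paper's Lemma \ref{lem:splitting-f} with $\alpha>1/2$ together with the triple-interaction geometry of Lemmas \ref{lem:triangle-geometry} and \ref{lem:chains}).

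There is, however, one concrete gap. Writing $m(s)$ for your minimal gap, the claim $\lVert G_R\rVert_{C^1(\{m\ge\rho_0\})}\to0$ as $R\to\infty$ is false for fixed $\rho_0$: on configurations with $m(s)=\rho_0$ the higher-order bump interactions and the contribution of $\phi_s$ are of size comparable to $\mathrm{e}^{-2\alpha\mu\rho_0}$ (and to $\mathrm{e}^{-\frac{4}{3}\mu\rho_0}$ for triples), fixed positive constants, not $o_R(1)$. What you actually need, and what is true, is a bound of the form $|G_R|+|\nabla_s G_R|\le C\bigl(R^{-2}+\mathrm{e}^{-(1+\delta)\mu m(s)}\bigr)$ for some $\delta>0$, i.e.\ superlinear in the smallest pair interaction; with that, the boundary-versus-interior comparison still works after choosing $\rho_0$ large. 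The false ``$o(1)$'' also undermines your last step: the relation $\Psi'(d_{i-1})=\Psi'(d_i)+o(1)$ does not hold uniformly, so the cyclic propagation as written is not justified. It is simpler to drop it: at the minimizer, if the smallest gap $m_R$ stayed bounded along a subsequence, then $\mathcal{J}_R(s_R)\ge E_n+\beta\mathrm{e}^{-\mu m_R}-C\bigl(\mathrm{e}^{-(1+\delta)\mu m_R}+R^{-2}\bigr)\ge E_n+\frac{\beta}{2}\mathrm{e}^{-\mu m_R}-CR^{-2}$, contradicting the test value $E_n+o(1)$; hence $m_R\to\infty$, which simultaneously gives $\lVert\phi_{s_R}\rVert\to0$, so \eqref{eq:4} holds and all $|x_{R,i}-x_{R,j}|\to\infty$. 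Alternatively, adopt the paper's device of a slowly growing threshold $g_1(R)\to\infty$ with $\mathrm{e}^{-\mu g_1(R)}\gg R^{-2}$, which builds the divergence into the configuration set from the start.
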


\begin{theorem}
\label{thm:with-boundary} Assume that $\gamma(0)\neq\gamma(1).$ Suppose also
that \emph{(H1)-(H5)} hold. For each $n\in\mathbb{N}$, $n\geq2$, there exists
$R_{n}>0$ such that for every $R\geq R_{n}$ there are an $n$-chain
$(x_{R,1},x_{R,2},\ldots,x_{R,n})\in\left(  \Gamma_{R}\right)  ^{n}$ and a
solution $u_{R}$ of \eqref{eq:1} such that
\begin{equation}
u_{R}=\sum_{i=1}^{k}(U_{x_{R,2i-1},R}^{+}+U_{x_{R,2i},R}^{-})+(n-2k)U_{x_{R,n}
,R}^{+}+o(1) \label{eq:5}
\end{equation}
in $H^{1}(\mathbb{R}^{N})$ as $R\rightarrow\infty$, where $k$ is the largest
integer smaller than or equal to $n/2.$ Moreover, as $R\rightarrow\infty$,
$\lvert x_{R,i}-x_{R,j}\rvert\rightarrow\infty$ if $i\neq j$, and
\emph{dist}$(x_{R,i},\partial\Gamma_{R})\rightarrow\infty$ for all $i $.
\end{theorem}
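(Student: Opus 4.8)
The plan is to obtain solutions of \eqref{eq:1} as critical points of the energy functional $J_{R}(u):=\tfrac12\int_{\Omega_{R}}(\lvert\nabla u\rvert^{2}+\lambda u^{2})-\int_{\Omega_{R}}F(u)$, $F(s):=\int_{0}^{s}f$, on $H_{0}^{1}(\Omega_{R})$, which is well defined and $C^{2}$ by (H2)--(H3) and positive definite in its quadratic part by (H1) together with Lemma~\ref{lem:schroed-invertible} (applied to $\Omega_{R}$ for $R$ large). I would first fix, for each admissible $n$-chain, an approximate solution $W_{\mathbf t}$ of the form prescribed by \eqref{eq:5}: transplant $U^{\pm}$ to $\Omega_{R}$ along $\Gamma_{R}$ using Fermi (normal exponential) coordinates around $\Gamma_{R}$, so that the transplanted bumps genuinely lie in $H_{0}^{1}(\Omega_{R})$ and differ from the $U_{x,R}^{\pm}$ of \eqref{eq:3} only by controlled errors, and sum them with alternating signs, with an extra $+$ bump at $x_{n}$ when $n$ is odd. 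By the translation invariance of \eqref{eq:2} in the $\xi$-direction and (H5), the kernel of the limit linearized operator at each $U^{\pm}$ is exactly $\mathbb{R}\,\partial_{\xi}U^{\pm}$; the corresponding approximate kernel $Z_{\mathbf t}$ at $W_{\mathbf t}$ is the $n$-dimensional space spanned by the tangential derivatives of the $n$ transplanted bumps.

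Next comes the standard Lyapunov--Schmidt reduction. Using (H5) and the spectral structure of the limit problem (whose essential spectrum is positive by (H1)), $J_{R}''(W_{\mathbf t})$ restricted to $Z_{\mathbf t}^{\perp}$ is uniformly invertible, with bound independent of the chain, whenever the chain is \emph{admissible}, i.e.\ the bumps are pairwise well separated and each is far from $\partial\Gamma_{R}$; this uses a quantitative version of (H5) and the exponential decay of $U^{\pm}$ (from (H3)). By the implicit function theorem one solves the equation projected onto $Z_{\mathbf t}^{\perp}$, obtaining for each admissible chain a unique small $\phi_{\mathbf t}\in Z_{\mathbf t}^{\perp}$, smooth in $\mathbf t$, with $\lVert\phi_{\mathbf t}\rVert_{H^{1}}\to0$ and $J_{R}'(W_{\mathbf t}+\phi_{\mathbf t})\in Z_{\mathbf t}$. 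Setting $\Phi_{R}(\mathbf t):=J_{R}(W_{\mathbf t}+\phi_{\mathbf t})$, a standard argument shows that a critical point $\mathbf t$ of $\Phi_{R}$ in the \emph{interior} of the admissible set yields a solution $u_{R}:=W_{\mathbf t}+\phi_{\mathbf t}$ of \eqref{eq:1}, which by construction has the form \eqref{eq:5} up to $o(1)$ in $H^{1}(\mathbb{R}^{N})$.

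The heart of the matter is to locate an interior critical point of $\Phi_{R}$. For this I would prove an expansion $\Phi_{R}(\mathbf t)=\sum_{i}c_{\infty}^{\sigma_{i}}+\tfrac{1}{R^{2}}\sum_{i}C_{\sigma_{i}}\kappa(t_{i})^{2}+\sum_{\text{adjacent }i}\Psi_{i}(\mathbf t)+\sum_{\text{end bumps}}\Psi_{i}^{\partial}(\mathbf t)+(\text{lower order})$, where $\sigma_{i}\in\{+,-\}$ is the sign of the $i$-th bump ($\sigma_{i}=+$ for $i$ odd), $c_{\infty}^{\pm}$ are the energies of $U^{\pm}$, $\kappa$ is the curvature of $\gamma$, $C_{\pm}>0$ are explicit constants from the Fermi-coordinate volume element, and $\Psi_{i},\Psi_{i}^{\partial}$ are the interactions of consecutive bumps, resp.\ of an end bump with the nearer cap of the tube; by (H4), so that $U^{+}>0>U^{-}$, these are \emph{positive} (consecutive bumps have opposite signs, hence repel, and a Dirichlet cap acts like a negative reflected bump, hence also repels), of exponential order $e^{-\sqrt{\lambda}\cdot(\text{distance})}$. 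I would then pick the admissible set $\mathcal A_{R}$ adapted to the geometry of $\gamma$: localized in the $t$-variable near a minimizer of $\kappa^{2}$ (a one-sided neighborhood if this minimizer is an endpoint of $[0,1]$), with the minimal allowed separations between consecutive $t_{i}$ and between the extreme $t_{i}$ and $0,1$ of order $R^{-1}\log R$, and minimize $\Phi_{R}$ over $\mathcal A_{R}$. The expansion shows that on $\partial\mathcal A_{R}$ some repulsive term $\Psi_{i}$ or $\Psi_{i}^{\partial}$ is activated at its largest value while the geometric term is essentially minimal, so $\Phi_{R}|_{\partial\mathcal A_{R}}$ strictly exceeds $\Phi_{R}$ at a conveniently chosen interior chain; hence the minimum is interior, producing $u_{R}$, and the construction of $\mathcal A_{R}$ forces $\lvert x_{R,i}-x_{R,j}\rvert\to\infty$ and $\operatorname{dist}(x_{R,i},\partial\Gamma_{R})\to\infty$.

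The main obstacle is the interplay, in this last step, between the geometric terms and the interactions: the curvature contribution is only of polynomial order $1/R^{2}$ whereas the interactions decay exponentially in the inter-bump and bump-to-cap distances, so one cannot use an admissible region of fixed size, on which $\nabla\Phi_{R}$ is dominated by $\nabla\bigl(\sum\kappa(t_{i})^{2}\bigr)$, possibly pointing toward $\partial\Gamma_{R}$, so that the minimizer would escape. The region $\mathcal A_{R}$ must be shrunk (in the $t$-variable) at precisely the rate $\sim R^{-1}\log R$ at which curvature gradient and bump repulsion balance, while the true distances, of order $\log R$, still diverge; making this quantitative, i.e.\ establishing the expansion of $\Phi_{R}$ with remainders small \emph{relative to} the exponentially small interaction terms, in particular controlling the contribution of $\phi_{\mathbf t}$ and of the Fermi-coordinate errors to this precision, is the technically demanding part. (If $\gamma$ is a segment then $\kappa\equiv0$, the geometric term disappears, a fixed-size region works, and the minimizing chain is the equally spaced, centered one, with distances of order $R$.) The case $\gamma(0)=\gamma(1)$ of Theorem~\ref{thm:no-boundary} is handled in the same way, the caps being replaced by the cyclic structure, which is what forces the number of bumps to be even there.
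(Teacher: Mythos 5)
Your overall framework --- bumps $U^{\pm}$ transplanted along $\Gamma_{R}$ with alternating signs, a Lyapunov--Schmidt reduction whose approximate kernel comes from the translation invariance together with (H5), and then minimization of the reduced energy over chains of well-separated points away from the ends, using that adjacent opposite-sign bumps and the Dirichlet ends both raise the energy --- is the same as the paper's. The genuine gap is in the step where you locate an interior minimum. You posit an expansion of the reduced energy containing a curvature term $R^{-2}\sum_{i}C_{\sigma_{i}}\kappa(t_{i})^{2}$ with a remainder small relative both to this term and to the exponentially small interactions, and you then shrink the admissible set to a window of width $\sim R^{-1}\log R$ around a minimizer of $\kappa^{2}$. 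Nothing you outline justifies that expansion, and it is not attainable at the stated precision with your ansatz: the energy correction produced by $\phi_{\mathbf t}$ is of order $\lVert J_{R}'(W_{\mathbf t})\rVert^{2}$, and even for Fermi-coordinate transplants the residual $J_{R}'(W_{\mathbf t})$ is itself of order $\kappa/R$ (metric distortion) plus boundary-adjustment terms, so this correction is at best of the same order $R^{-2}$ as the term whose size and sign your localization argument needs; with the uniform estimates one can actually prove for bumps corrected to vanish on $\partial\Omega_{R}$ (cf.\ Lemmas~\ref{lem:u-v-est}, \ref{lem:nearness-u-v}, \ref{lem:w-v-est-open}, \ref{lem:nearness-w-v-open}, where the discrepancy is only $O(R^{-1/2})$ in $H^{1}$), the reduction error is $O(R^{-1})$ and swamps $R^{-2}$ outright. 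Hence the ``geometric term'' on which your interior-minimum argument rests is not controlled. (A small further point: the interactions decay like $\mathrm{e}^{-\mu d}$ with $\mu=\sqrt{\lambda+\lambda_{1,1}}$, not $\mathrm{e}^{-\sqrt{\lambda}\,d}$; $\lambda$ may be negative.)

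What you missed is that no curvature expansion is needed, because the admissible set can be calibrated so that the boundary penalty dominates every geometric error uniformly. In the paper the chains are only required to satisfy $\lvert x_{i}-x_{j}\rvert>g_{1}(R)$ and $2\,\mathrm{dist}(x_{i},\partial\Gamma_{R})>g_{1}(R)$ with $g_{1}(R)=\frac{1}{2\mu}\log R$. All effects of the bending of the tube, of the ends inside the admissible region, and of the reduction are uniformly $O(R^{-2/3})$ (Lemmas~\ref{lem:u-v-est}, \ref{lem:nearness-u-v}, \ref{lem:w-v-est-open}, \ref{lem:nearness-w-v-open} and estimate \eqref{eq:31}), whereas on $\partial\mathcal{U}_{1,R}$ either two adjacent opposite-sign bumps at distance exactly $g_{1}(R)$, or a bump at distance $g_{1}(R)/2$ from an end, raises the energy by at least $\beta\mathrm{e}^{-\mu g_{1}(R)}=\beta R^{-1/2}$ (Propositions~\ref{prop:jr-high-on-boundary-open} and \ref{prop:jr-low-in-interior-open}); since $R^{-1/2}\gg R^{-2/3}$, the minimum over $\overline{\mathcal{U}_{1,R}}$ is interior no matter where $\kappa^{2}$ is minimized, and the separations, being at least $g_{1}(R)\to\infty$, already give all conclusions of the theorem. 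So the balancing of curvature gradient against exponential repulsion that you describe as the main obstacle never has to be confronted, and, as written, your proof of the key interior-minimum step is incomplete.
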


All solutions constructed in Theorems~\ref{thm:no-boundary} and
\ref{thm:with-boundary} change sign. If $\gamma$ is a closed curve these
solutions have an even number of bumps with alternating signs along the curve,
whereas in the open-end case $\gamma(0)\neq\gamma(1)$ the number of
alternating bumps may be even or odd. Note that the term $(n-2k)$ in
Theorem~\ref{thm:with-boundary} is $0$ if $n$ is even, and it is $1$ if $n$ is
odd. In the first case we have a positive bump at one end and a negative bump
at the other end of the domain, and in the second case we have positive bumps
at both ends. Of course, applying Theorem~\ref{thm:with-boundary} with $f(u)$
replaced by $-f(-u)$ and then multiplying the obtained multibump solution by
$-1$, we obtain a solution with negative bumps at both ends, as well.

Observe that in the open-end case the domains $\Omega_{R}$ are contractible,
and they are even convex if $\Gamma$ is a segment. This means that to get
multiplicity of sign changing solutions neither topological nor particular
geometrical assumptions are needed. This stands in contrast with the case of
positive solutions where it has been conjectured that for some power-type
nonlinearities only one positive solution exists in any convex
domain~\cite{MR949628}, as it does in a ball. Of course this difference between
multiplicity of positive and sign changing solutions can be easily understood
by looking at odd nonlinearities. In fact, if $f$ is odd (for example, if
$f(u)=\lvert u\rvert^{p-1}u$, $p\in(1,p_{S})$) it is well known that
infinitely many sign changing solutions exist in any bounded domain. Our
results \emph{do not assume that} $f$ \emph{is odd}, therefore multiplicity of
sign changing solutions is not so obvious. In fact, if $f$ is not odd only few
multiplicity results are available, see e.g. \cite{MR2037264, MR1627654}.

Dancer exhibited positive solutions with multiple bumps for \textquotedblleft
dumbbell shaped domains\textquotedblright\ \cite{MR949628,MR1072904}. Sign changing
solutions may also be constructed in domains of this type. On the other hand,
if $\Gamma$ is a segment, Theorem~\ref{thm:with-boundary} yields examples of
\emph{convex} domains in which problem \eqref{eq:1} has at least $k$ nodal
solutions with up to $k+1$ peaks, for any given $k$, without assuming that $f$
is odd. We believe this is the first result of this type.

As in other similar problems, the procedure to prove
Theorems~\ref{thm:no-boundary} and \ref{thm:with-boundary} is to consider
approximate solutions to problem \eqref{eq:1} and then show that near them a
true solution exists. So, to start, we need to make a good guess as to what
the approximate solutions should be. The geometry of our expanding domains
suggests looking at functions of the form
\begin{equation*}
U_{x_{R,1},R}^{+}+U_{x_{R,2},R}^{-}+U_{x_{R,3},R}^{+}+U_{x_{R,4},R}^{-}+\cdots
\end{equation*}
for finitely many points $x_{R,1},x_{R,2},x_{R,3},x_{R,4},...$, ordered along
the curve, whose number is even if the curve is closed. Then some estimates
are needed to show that these are indeed good approximate solutions and to
compute the order of the approximation. To prove the existence of a true
solution near them we follow a well-known Lyapunov-Schmidt reduction
procedure, which relies on the contraction mapping principle. This requires
again careful estimates on the approximate solutions and their linearization.
Finally, a critical point of the reduced problem is obtained by a
minimization. Here the crucial role is played by the fact that the interaction
between a positive and a negative bump increases the value of the energy
functional. This explains why the bumps should be placed along the tube with
alternating signs and why the number of bumps must be even in the closed tube
case (Theorem~\ref{thm:no-boundary}). In the open-end case
(Theorem~\ref{thm:with-boundary}) the energy also increases as a bump
approaches an end of the tube. Therefore, in both cases, a solution to the
reduced problem is obtained by minimizing the energy.

It is harder to prove similar results when $\Gamma$ is a higher dimensional
manifold, instead of a curve. For positive solutions some results were
obtained by Dancer and Yan \cite{MR1886955} when $\Gamma$ is the boundary of a convex
domain. Positive multibump solutions in a tubular neighborhood of an expanding
compact manifold have been constructed in \cite{aclapa-2}. The problem of
constructing sign changing solutions in such domains is more subtle and
requires minimax arguments.

The outline of the paper is as follows: In section~\ref{sec:prel-estim-1} we
have collected some tools, and results about the linear problem.
Section~\ref{sec:energy-estimates} contains the essential energy estimates,
while in section~\ref{sec:finite-dimens-reduct} we describe the finite
dimensional reduction and prove our main results.

\begin{acknowledgement}
Filomena Pacella wishes to thank the Mathematics Institute at UNAM and
Nils Ackermann and M\'{o}nica Clapp wish to thank the Mathematics Department
of the Universit\`{a} \textquotedblleft La Sapienza\textquotedblright\ di Roma
for their kind and warm hospitality.
\end{acknowledgement}

\section{Preliminaries}

\label{sec:prel-estim-1}

\subsection{Algebraic and geometric tools}

\label{sec:algebr-geom-tools}We start with some elementary lemmas which will
be used later to estimate the interactions.

\begin{lemma}
\label{lem:interaction-exponential} Suppose that $\mu_{k}>\bar{\mu}\geq0$ for
$k=1,2,3$. Then there is $C>0$ such that the inequalities
\begin{equation}
\int_{\mathbb{R}^{N}}\mathrm{e}^{-\mu_{1}\lvert x-x_{1}\rvert}\mathrm{e}
^{-\mu_{2}\lvert x-x_{2}\rvert}\,\mathrm{d}x\leq C\mathrm{e}^{-\bar{\mu}\lvert
x_{1}-x_{2}\rvert} \label{eq:19}
\end{equation}
and
\begin{equation}
\int_{\mathbb{R}^{N}}\mathrm{e}^{-\mu_{1}\lvert x-x_{1}\rvert}\mathrm{e}
^{-\mu_{2}\lvert x-x_{2}\rvert}\mathrm{e}^{-\mu_{3}\lvert x-x_{3}\rvert
}\,\mathrm{d}x\leq C\exp\biggl (-\bar{\mu}\min_{x\in\mathbb{R}^{N}}\sum
_{k=1}^{3}\lvert x-x_{k}\rvert\biggr ) \label{eq:20}
\end{equation}
hold true for all $x_{1},x_{2},x_{3}\in\mathbb{R}^{N}$.
\end{lemma}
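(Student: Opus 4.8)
The plan is to use little more than the triangle inequality, after peeling off from each exponential the part that carries the rate $\bar\mu$. For $k=1,2,3$ set $\delta_k:=\mu_k-\bar\mu>0$, so that $e^{-\mu_k|x-x_k|}=e^{-\bar\mu|x-x_k|}\,e^{-\delta_k|x-x_k|}$. The gain from this decomposition is that the factors carrying the rate $\bar\mu$ combine into a quantity that is bounded, uniformly in $x$, by the right-hand side of \eqref{eq:19} or \eqref{eq:20}, while the leftover factors integrate to a finite constant that no longer sees the centers $x_k$.

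For \eqref{eq:19} I would start from
\[
\int_{\mathbb{R}^{N}}e^{-\mu_1|x-x_1|}e^{-\mu_2|x-x_2|}\,dx
=\int_{\mathbb{R}^{N}}e^{-\bar\mu(|x-x_1|+|x-x_2|)}\,e^{-\delta_1|x-x_1|}e^{-\delta_2|x-x_2|}\,dx,
\]
then bound $e^{-\bar\mu(|x-x_1|+|x-x_2|)}\le e^{-\bar\mu|x_1-x_2|}$ using $|x-x_1|+|x-x_2|\ge|x_1-x_2|$, pull this factor out of the integral, discard $e^{-\delta_2|x-x_2|}\le1$, and translate by $x_1$. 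What remains is $e^{-\bar\mu|x_1-x_2|}\int_{\mathbb{R}^{N}}e^{-\delta_1|x|}\,dx$, where the integral is finite and depends only on $\delta_1$ and $N$. This yields \eqref{eq:19}.

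For \eqref{eq:20} the computation is identical with three factors: after the decomposition the product $\exp\!\bigl(-\bar\mu\sum_{k=1}^{3}|x-x_k|\bigr)$ appears, and since $\sum_{k=1}^{3}|x-x_k|\ge\min_{z\in\mathbb{R}^{N}}\sum_{k=1}^{3}|z-x_k|$ for every $x$, this product is at most $\exp\!\bigl(-\bar\mu\min_{z\in\mathbb{R}^{N}}\sum_{k=1}^{3}|z-x_k|\bigr)$, which is again independent of $x$; bounding $e^{-\delta_2|x-x_2|}e^{-\delta_3|x-x_3|}\le1$ and translating leaves the same constant $\int_{\mathbb{R}^{N}}e^{-\delta_1|x|}\,dx$. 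There is no real obstacle in this lemma: the only point needing care is that $C$ be independent of $x_1,x_2,x_3$, and this is exactly what the splitting $\mu_k=\bar\mu+\delta_k$ secures, the residual integral being translation invariant. The degenerate case $\bar\mu=0$ is covered automatically, both inequalities then reducing to the finiteness of the integral of a product of decaying exponentials.
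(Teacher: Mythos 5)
Your proof is correct and follows essentially the same route as the paper: split each rate as $\mu_k=\bar\mu+\delta_k$, use the triangle inequality (resp.\ the fact that $x$ is admissible in the minimization) to pull out the factor $\mathrm{e}^{-\bar\mu\lvert x_1-x_2\rvert}$ (resp.\ $\exp(-\bar\mu\min_z\sum_k\lvert z-x_k\rvert)$), and integrate one remaining decaying exponential. The only cosmetic difference is which residual factor you keep (the paper retains $\mathrm{e}^{-(\mu_2-\bar\mu)\lvert x-x_2\rvert}$ and drops the other), which is immaterial.
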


\begin{proof}
Since $\bar{\mu}\lvert x_{1}-x_{2}\rvert+(\mu_{2}-\bar{\mu})\lvert
x-x_{2}\rvert\leq\bar{\mu}(\lvert x-x_{1}\rvert+\lvert x-x_{2}\rvert)+(\mu
_{2}-\bar{\mu})\lvert x-x_{2}\rvert\leq\mu_{1}\lvert x-x_{1}\rvert+\mu
_{2}\lvert x-x_{2}\rvert,$ we have that
\begin{equation*}
\int_{\mathbb{R}^{N}}\mathrm{e}^{-\mu_{1}\lvert x-x_{1}\rvert}\mathrm{e}
^{-\mu_{2}\lvert x-x_{2}\rvert}\,\mathrm{d}x\leq\int_{\mathbb{R}^{N}
}\mathrm{e}^{-\bar{\mu}\lvert x_{1}-x_{2}\rvert}\mathrm{e}^{-(\mu_{2}-\bar
{\mu})\lvert x-x_{2}\rvert}\,\mathrm{d}x=C\mathrm{e}^{-\bar{\mu}\lvert
x_{1}-x_{2}\rvert},
\end{equation*}
as claimed. The proof of the other inequality is similar.
\end{proof}

\begin{lemma}
\label{lem:splitting-f} There exists $\alpha\in(1/2,1]$ with the following
property: for any given $\widetilde{C}_{1}\geq1$ and $n\in\mathbb{N}$ there is
a constant $\widetilde{C}_{2}=\widetilde{C}_{2}(\alpha,n,\widetilde{C}_{1})>0$
such that the inequalities
\begin{equation}
\biggl \lvert f\biggl(\sum_{i=1}^{n}u_{i}\biggr)-\sum_{i=1}^{n}f(u_{i}
)\biggr \rvert\leq\widetilde{C}_{2}\sum_{i<j}\lvert u_{i}u_{j}\rvert^{\alpha},
\label{eq:21}
\end{equation}
\begin{equation}
\biggl \lvert F\biggl(\sum_{i=1}^{n}u_{i}\biggr)-\sum_{i=1}^{n}F(u_{i}
)-\sum_{i\neq j}f(u_{i})u_{j}\biggr \rvert\leq\widetilde{C}_{2}\biggl(\sum
_{i<j}\lvert u_{i}u_{j}\rvert^{2\alpha}+\sum_{i<j<k}\lvert u_{i}u_{j}
u_{k}\rvert^{2/3}\biggr), \label{eq:22}
\end{equation}
hold true for all $u_{1},u_{2},\dots,u_{n}\in\mathbb{R}$ with $\lvert
u_{i}\rvert\leq\widetilde{C}_{1}$.
\end{lemma}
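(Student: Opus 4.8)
The plan is to prove both inequalities by exploiting the polynomial growth bounds (H3) on $f$ together with a case distinction on the relative sizes of the summands. Set $F(u):=\int_0^u f(s)\,\mathrm{d}s$, and fix $n$ and $\widetilde C_1$; all constants below may depend on these. Note first that (H3) with $k=0$ gives $\lvert f(u)\rvert\leq C(\lvert u\rvert^{p_1}+\lvert u\rvert^{p_2})$ and, integrating, $\lvert F(u)\rvert\leq C(\lvert u\rvert^{p_1+1}+\lvert u\rvert^{p_2+1})$; on the bounded range $\lvert u\rvert\leq\widetilde C_1$ these simplify to $\lvert f(u)\rvert\leq C\lvert u\rvert^{p_1}$ and $\lvert F(u)\rvert\leq C\lvert u\rvert^{p_1+1}$ since $p_1\leq p_2$. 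The key algebraic point is that the left-hand sides of \eqref{eq:21} and \eqref{eq:22} are \emph{multilinear-type remainders}: they vanish whenever all but one of the $u_i$ are zero. The strategy is to bound each such remainder by a sum of ``mixed'' monomials $\lvert u_i u_j\rvert^\alpha$ (resp.\ $\lvert u_iu_j\rvert^{2\alpha}$ and $\lvert u_iu_ju_k\rvert^{2/3}$), which is exactly the form in which the energy estimates of Section~\ref{sec:energy-estimates} will later consume them.

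For \eqref{eq:21}, I would argue by induction on $n$, so it suffices to treat $n=2$: one must show $\lvert f(u+v)-f(u)-f(v)\rvert\leq \widetilde C_2\lvert uv\rvert^\alpha$ for $\lvert u\rvert,\lvert v\rvert\leq\widetilde C_1'$ (with $\widetilde C_1'$ slightly larger than $\widetilde C_1$ to absorb the inductive step, where one applies the $n=2$ case to $u_n$ and $\sum_{i<n}u_i$ and then the inductive hypothesis, using $\bigl\lvert\sum_{i<n}u_i\bigr\rvert\leq (n-1)\widetilde C_1$). By symmetry assume $\lvert v\rvert\leq\lvert u\rvert$. If $\lvert v\rvert\geq\lvert u\rvert/2$ (the ``comparable'' regime), then $\lvert uv\rvert\geq\lvert u\rvert^2/2$, so $\lvert uv\rvert^{p_1/2}$ dominates each of $\lvert u\rvert^{p_1},\lvert v\rvert^{p_1},\lvert u+v\rvert^{p_1}$ up to a constant, and we win with $\alpha\leq p_1/2$. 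If $\lvert v\rvert<\lvert u\rvert/2$ (the ``small perturbation'' regime), then $u$ and $u+v$ lie in a closed interval bounded away from $0$ of length comparable to $\lvert v\rvert$, where $f\in C^1$; by the mean value theorem $\lvert f(u+v)-f(u)\rvert\leq \lvert v\rvert\max\lvert f'\rvert$, and $\lvert f(v)\rvert\leq C\lvert v\rvert^{p_1}$, so the remainder is $\leq C'\lvert v\rvert^{\min(1,p_1)}\leq C'\lvert v\rvert\leq C''\lvert uv\rvert^{1/2}$ using $\lvert u\rvert\geq\lvert v\rvert$ and that $\lvert u\rvert$ is bounded. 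Combining the two regimes, any $\alpha\in(1/2,1]$ with $\alpha\leq p_1/2$ works; since $p_1>1$ we may take, e.g., $\alpha:=\min\{1,p_1/2\}\in(1/2,1]$.

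For \eqref{eq:22}, I would reduce similarly by induction on $n$ to the cases $n=2$ and $n=3$, exploiting that for $n\geq3$ the difference $F(\sum u_i)-\sum F(u_i)-\sum_{i\neq j}f(u_i)u_j$ telescopes, when peeling off $u_n$, into an $(n-1)$-term remainder of the same type plus terms of the form $f(u_n+\text{sum})-f(u_n)$ times a single $u_j$ and $[f(\text{sum of }u_i,i<n)-\sum_{i<n}f(u_i)]u_n$, each of which is controlled by \eqref{eq:21} and hence by mixed quadratic or cubic monomials. In the base cases one again splits into regimes by comparing the sizes of the $u_i$: when two indices, say the two largest, are comparable in absolute value one extracts a factor $\lvert u_iu_j\rvert^{2\alpha}$ (using the second-order Taylor expansion of $F$, i.e.\ $f\in C^1$, to get a quadratic-order remainder, or using the crude bound $\lvert F\rvert\leq C\lvert\cdot\rvert^{p_1+1}$ in the fully comparable case with $2\alpha\leq(p_1+1)/2$, which holds as $\alpha\leq p_1/2\leq (p_1+1)/4$ is false — so here one should instead note $(p_1+1)/2>1\geq 2\alpha$ when $\alpha\le 1/2$, which forces a slightly more careful bookkeeping); when all three are of genuinely different orders the smallest is negligible and the term is absorbed into $\lvert u_iu_ju_k\rvert^{2/3}$ after checking $2/3\cdot 3 = 2\geq p_1$-type exponent inequalities — if $p_1\geq 2$ one uses the $C^1$-regularity of $f$ away from $0$, and if $p_1<2$ one uses the growth bound directly. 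The \textbf{main obstacle} is precisely this exponent bookkeeping in \eqref{eq:22}: one must verify that the \emph{same} $\alpha$ chosen for \eqref{eq:21} makes every one of the finitely many regime-estimates close, which requires tracking how a first-order Taylor remainder ($f\in C^1$) versus the pure growth bound ($\lvert f^{(k)}(u)\rvert\lesssim\lvert u\rvert^{p_1-k}$) interact near and away from the origin; the $C^3$ regularity of $f$ off $0$ in (H2)–(H3) is available in case a second-order Taylor expansion of $F$ is needed to reach the quadratic exponent $2\alpha$. Once the right $\alpha\in(1/2,1]$ is pinned down, summing the regime bounds over the $O(n^3)$ triples and relabeling the constant as $\widetilde C_2(\alpha,n,\widetilde C_1)$ finishes the proof.
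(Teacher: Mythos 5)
Your overall architecture---reduce to the case $n=2$, induct on $n$, and argue through a case distinction based on the localized growth bound that (H3) gives on $\lvert u\rvert\le\widetilde C_1$---is the same as the paper's. But as written the proposal does not prove the lemma, for two concrete reasons. First, in your ``small perturbation'' regime for \eqref{eq:21} the chain of estimates ends at $C'\lvert v\rvert\le C''\lvert uv\rvert^{1/2}$; a bound with exponent exactly $1/2$ does not imply one with exponent $\alpha>1/2$ (for $\lvert uv\rvert\le1$ one has $\lvert uv\rvert^{1/2}\ge\lvert uv\rvert^{\alpha}$), and the strict inequality $\alpha>1/2$ is precisely what the lemma must deliver, since the later arguments need $\alpha'\in(1/2,\alpha)$ and $2\alpha>1$ to make the interaction errors $o(\mathrm{e}^{-\mu g_1(R)})$. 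The dangerous case in that regime is $u,v$ both small and comparable, say $v=u/3$: for $f(u)=\lvert u\rvert^{p_1-1}u$ the remainder is of order $\lvert u\rvert^{p_1}$ while $\lvert uv\rvert^{\alpha}\sim\lvert u\rvert^{2\alpha}$, so to beat exponent $1/2$ you must use the decay $\lvert f'(w)\rvert\le C\lvert w\rvert^{p_1-1}$ along the segment in the mean value step (giving the constraint $\alpha\le p_1/2$), not mere boundedness of $f'$; your argument as stated does not reach this.

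Second, and more seriously, the exponent you settle on, $\alpha=\min\{1,p_1/2\}$, is provably too large for \eqref{eq:22}: take $f(u)=\lvert u\rvert^{p_1-1}u$, $n=2$, $u_1=u_2=\varepsilon\to0$; the left-hand side of \eqref{eq:22} is of order $\varepsilon^{p_1+1}$ while the right-hand side is of order $\varepsilon^{4\alpha}$, so one must have $4\alpha\le p_1+1$, i.e.\ $\alpha\le(p_1+1)/4$, which is strictly smaller than $p_1/2$ when $p_1>1$. You notice this tension, but your attempted repair (``$(p_1+1)/2>1\ge2\alpha$ when $\alpha\le1/2$'') contradicts the requirement $\alpha>1/2$, and you then explicitly leave the ``exponent bookkeeping''---which is the actual content of the lemma---unresolved, so no admissible $\alpha$ is exhibited for which both inequalities hold. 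The paper resolves this by fixing $\alpha:=\min\{(p_1+1)/4,\,1\}$ at the outset; this still lies in $(1/2,1]$ because $p_1>1$, it satisfies $\alpha\le p_1/2$ so the two-variable version of \eqref{eq:21} holds with it as well, and both $n=2$ inequalities are then verified using $\lvert f^{(k)}(u)\rvert\le C\lvert u\rvert^{p_1-k}$ for $\lvert u\rvert\le\widetilde C_1$, before the induction on $n$ that you also propose. To repair your write-up: choose $\alpha=\min\{(p_1+1)/4,1\}$ from the start, redo the small-perturbation regime with the weighted mean value bound, and carry this same $\alpha$ through the case analysis for \eqref{eq:22}.
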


\begin{proof}
Observe that (H3) implies that there is a constant $C>0$ such that
\begin{equation}
\left\vert f^{(k)}(u)\right\vert \leq C\left\vert u\right\vert ^{p_{1}
-k}\qquad\text{if }\left\vert u\right\vert \leq\widetilde{C}_{1},\ u\neq0.
\label{eq:29}
\end{equation}
Set $\alpha:=\min\{(p_{1}+1)/4,1\}\in(1/2,1]$. It is tedious but elementary to
prove that the inequalities
\begin{equation}
\left\vert f(u+v)-f(u)-f(v)\right\vert \leq C\left\vert uv\right\vert
^{\alpha} \label{eq:9}
\end{equation}
and
\begin{equation}
\left\vert F(u+v)-F(u)-F(v)-f(u)v-f(v)u\right\vert \leq C\left\vert
uv\right\vert ^{2\alpha} \label{eq:11}
\end{equation}
hold true for some constant $C>0$, if $\left\vert u\right\vert ,\left\vert
v\right\vert \leq\widetilde{C}_{1}$. These are inequalities \eqref{eq:21} and
\eqref{eq:22} for $n=2$. For $n>2$ inequalities \eqref{eq:21} and
\eqref{eq:22} follow easily by induction on $n$.
\end{proof}

The right-hand side of inequality \eqref{eq:22} indicates that we will need to
consider triple interactions. The following lemma will be useful to estimate them.

\begin{lemma}
\label{lem:triangle-geometry} Consider a triangle in $\mathbb{R}^{N}$ with
vertices $x_{1},x_{2},x_{3}\in\mathbb{R}^{N}$ and side lengths $w\leq v\leq
u$. Denote $s:=\min_{x\in\mathbb{R}^{N}}\sum_{k=1}^{3}\lvert x-x_{k}\rvert$.
Then the following statements are true:

\begin{enumerate}
\item[(a)] If one of the interior angles is larger than or equal to $2\pi/3$,
then $s=v+w$.

\item[(b)] In any case, $s\geq(w+v+u)/2$.
\end{enumerate}
\end{lemma}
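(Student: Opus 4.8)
The plan is to handle the two parts separately; part~(b) is elementary, while part~(a) amounts to recognizing that the large-angle vertex is the Fermat--Torricelli point of the triangle.

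For part~(b) I would just use the triangle inequality on the three pairs of vertices: for every $x\in\mathbb{R}^{N}$ one has $\lvert x-x_{i}\rvert+\lvert x-x_{j}\rvert\geq\lvert x_{i}-x_{j}\rvert$ for each of the pairs $\{i,j\}\in\{\{1,2\},\{1,3\},\{2,3\}\}$. Adding the three inequalities gives
\[
2\sum_{k=1}^{3}\lvert x-x_{k}\rvert\ \geq\ \lvert x_{1}-x_{2}\rvert+\lvert x_{1}-x_{3}\rvert+\lvert x_{2}-x_{3}\rvert\ =\ u+v+w ,
\]
and dividing by $2$ and taking the infimum over $x$ yields $s\geq(u+v+w)/2$.

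For part~(a), I would first record the easy upper bound. An interior angle that is $\geq 2\pi/3$ must be the largest angle of the triangle, since the other two angles then sum to at most $\pi/3$; hence it faces the longest side, of length $u$, so the two sides issuing from the corresponding vertex have lengths $v$ and $w$. Evaluating $\sum_{k}\lvert x-x_{k}\rvert$ at that vertex gives the value $v+w$, whence $s\leq v+w$. For the reverse inequality, label the vertices so that the interior angle at $x_{1}$ is $\geq 2\pi/3$, set $e_{2}:=(x_{2}-x_{1})/\lvert x_{2}-x_{1}\rvert$, $e_{3}:=(x_{3}-x_{1})/\lvert x_{3}-x_{1}\rvert$, and $e_{1}:=-(e_{2}+e_{3})$. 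Since $e_{2}\cdot e_{3}$ is the cosine of the interior angle at $x_{1}$, we have $e_{2}\cdot e_{3}\leq-1/2$, so $\lvert e_{1}\rvert^{2}=2+2\,e_{2}\cdot e_{3}\leq 1$; thus $e_{1},e_{2},e_{3}$ all have norm at most $1$ and sum to $0$. By Cauchy--Schwarz, $\lvert x-x_{k}\rvert\geq e_{k}\cdot(x_{k}-x)$ for every $x$ and $k=1,2,3$, and summing over $k$ while using $e_{1}+e_{2}+e_{3}=0$ gives
\[
\sum_{k=1}^{3}\lvert x-x_{k}\rvert\ \geq\ \sum_{k=1}^{3}e_{k}\cdot x_{k}\ =\ e_{2}\cdot(x_{2}-x_{1})+e_{3}\cdot(x_{3}-x_{1})\ =\ \lvert x_{2}-x_{1}\rvert+\lvert x_{3}-x_{1}\rvert\ =\ v+w ,
\]
independently of $x$. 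Hence $s\geq v+w$, and combined with the upper bound this proves $s=v+w$.

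I do not anticipate a genuine obstacle. The only points that require a bit of care are the combinatorial bookkeeping that links the ``$\geq 2\pi/3$'' vertex to the two shorter sides $v$ and $w$, and the verification that $\lvert e_{1}\rvert\leq 1$, which is exactly the place where the hypothesis on the angle is used.
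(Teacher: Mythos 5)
Your proof is correct, and for part (a) it takes a genuinely different route from the paper. For part (b) both arguments are the same elementary summing of the three triangle inequalities (the paper sums them at the minimizing point $x_{0}$, you sum them at an arbitrary $x$ and pass to the infimum; this is immaterial). For part (a), however, the paper does not prove anything: it simply invokes the classical Fermat--Torricelli facts from the cited reference \cite{MR1573157} --- namely that the minimum is attained at a unique point and that, when an interior angle is at least $2\pi/3$, this point is the vertex carrying that angle --- and then reads off $s=v+w$. You instead give a self-contained two-sided argument: the upper bound $s\leq v+w$ by evaluating at the obtuse-angle vertex (after the correct bookkeeping that this vertex faces the longest side $u$, so the two incident sides have lengths $v$ and $w$), and the lower bound via the dual estimate $\lvert x-x_{k}\rvert\geq e_{k}\cdot(x_{k}-x)$ with unit (or sub-unit) vectors $e_{2},e_{3}$ along the two incident sides and $e_{1}:=-(e_{2}+e_{3})$, where the hypothesis on the angle enters exactly through $\lvert e_{1}\rvert^{2}=2+2\,e_{2}\cdot e_{3}\leq1$. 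This is the standard variational/duality certificate for the Fermat--Torricelli problem, and what it buys is independence from the external reference and from any existence or uniqueness statement for the minimizer; what the paper's citation buys is brevity. Both are fully adequate for the way the lemma is used later (only the inequalities matter in Lemma~\ref{lem:chains}).
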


\begin{proof}
The following facts from triangle geometry may be found in \cite{MR1573157}. The
minimum $s$ is achieved at a unique point $x_{0}$ in $\mathbb{R}^{N}$. In case
(a) that point is the vertex of the triangle with the largest interior angle,
so the claim follows immediately.

To prove (b) observe that adding up the inequalities $\lvert x_{i}-x_{0}
\rvert+\lvert x_{j}-x_{0}\rvert\geq\lvert x_{i}-x_{j}\rvert$, $i\neq j$,
yields
\begin{equation*}
2s=2\sum_{k=1}^{3}\lvert x_{0}-x_{k}\rvert\geq w+v+u\qquad\forall
x\in\mathbb{R}^{N},
\end{equation*}
as claimed.
\end{proof}

\begin{lemma}
\label{lem:est-intersection-balls} For $n\in\mathbb{N}$ there is a constant
$C=C(n)$ such that if $x_{1},x_{2}\in\mathbb{R}^{n}$ satisfy $\lvert
x_{1}-x_{2}\rvert<1$ and if $r\in\lbrack1,\lvert x_{1}-x_{2}\rvert+1]$ then
\begin{align}
\text{\emph{vol}}_{n}\left(  B_{r}(x_{2})\smallsetminus B_{1}(x_{1})\right)
&  \leq C\left(  \left\vert x_{1}-x_{2}\right\vert +r-1\right)  ,
\label{eq:53}\\begin{equation*}1ex]
\sup_{x\in\partial B_{r}(x_{2})}\text{\emph{dist}}(x,\partial B_{1}(x_{1}))
&  \leq\left\vert x_{1}-x_{2}\right\vert +r-1,\label{eq:35}\\
\sup_{x\in\partial B_{1}(x_{1})}\text{\emph{dist}}(x,\partial B_{r}(x_{2}))
&  \leq\left\vert x_{1}-x_{2}\right\vert +r-1. \label{eq:42}
\end{align}
Here $\emph{vol}_{n}$ denotes the Lebesgue measure in $\mathbb{R}^{n}$ and
$B_{r}(x):=\{y\in\mathbb{R}^{n}\mid\lvert y-x\rvert<r\}$.
\end{lemma}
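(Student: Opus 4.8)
The plan is to reduce all three estimates to elementary applications of the triangle inequality after introducing the abbreviations $d:=\lvert x_{1}-x_{2}\rvert$ and $\delta:=d+r-1$. The hypotheses $\lvert x_{1}-x_{2}\rvert<1$ and $r\in[1,\lvert x_{1}-x_{2}\rvert+1]$ give $0\le r-1\le d<1$, hence $\delta\in[0,2)$; this bound on $\delta$ is what will turn polynomial expressions in $\delta$ into linear ones.

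For \eqref{eq:53} I would first establish the inclusion $B_{r}(x_{2})\smallsetminus B_{1}(x_{1})\subseteq B_{1+\delta}(x_{1})\smallsetminus B_{1}(x_{1})$. Indeed, if $x$ lies in the left-hand set then $\lvert x-x_{1}\rvert\ge1$ and $\lvert x-x_{2}\rvert<r$, so by the triangle inequality $\lvert x-x_{1}\rvert<\lvert x-x_{2}\rvert+d<r+d=1+\delta$. The right-hand side is the spherical shell centered at $x_{1}$ with radii $1$ and $1+\delta$, whose volume is $\omega_{n}\bigl((1+\delta)^{n}-1\bigr)$, where $\omega_{n}:=\text{vol}_{n}(B_{1}(0))$. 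Since $(1+\delta)^{n}-1=\delta\sum_{j=0}^{n-1}(1+\delta)^{j}\le n\,3^{\,n-1}\delta$ for $\delta\in[0,2)$, this yields \eqref{eq:53} with $C(n):=\omega_{n}n\,3^{\,n-1}$.

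For \eqref{eq:35} and \eqref{eq:42} the key observation is that the distance of a point $x$ to a sphere $\partial B_{\rho}(z)$ (with $x\ne z$, which will always be the case here since $d<1\le r$) is the radial distance $\bigl\lvert\,\lvert x-z\rvert-\rho\,\bigr\rvert$. To prove \eqref{eq:35}, take $x\in\partial B_{r}(x_{2})$; then $\text{dist}(x,\partial B_{1}(x_{1}))=\bigl\lvert\,\lvert x-x_{1}\rvert-1\,\bigr\rvert$, and the triangle inequality gives $r-d\le\lvert x-x_{1}\rvert\le r+d$. If $\lvert x-x_{1}\rvert\ge1$ this yields $\lvert x-x_{1}\rvert-1\le r+d-1=\delta$; if $\lvert x-x_{1}\rvert<1$ then $1-\lvert x-x_{1}\rvert\le 1-(r-d)\le r+d-1=\delta$, the last inequality being equivalent to $r\ge1$. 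Estimate \eqref{eq:42} is entirely symmetric: for $x\in\partial B_{1}(x_{1})$ one has $\text{dist}(x,\partial B_{r}(x_{2}))=\bigl\lvert\,\lvert x-x_{2}\rvert-r\,\bigr\rvert$ with $1-d\le\lvert x-x_{2}\rvert\le 1+d$, and the same two-case argument, using only $r\ge1$ and $d<1$, again gives the bound $\delta$.

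I do not expect a genuine obstacle; the statement is elementary. The only points that require a little care are (i) keeping track that $\delta<2$ so that the shell volume is linear, not merely polynomial, in $\delta$, and (ii) in the case analyses for \eqref{eq:35} and \eqref{eq:42}, invoking the hypotheses $r\ge1$ and $d<1$ precisely in the "interior" case where the nearest point on the target sphere lies on the far side of its center — that is exactly where these bounds are consumed.
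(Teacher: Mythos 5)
Your proof is correct, and for \eqref{eq:53} it takes a genuinely different (though equally elementary) route from the paper. You absorb the translation and the radius change in one step via the single inclusion $B_{r}(x_{2})\smallsetminus B_{1}(x_{1})\subseteq B_{1+\delta}(x_{1})\smallsetminus B_{1}(x_{1})$ with $\delta:=\lvert x_{1}-x_{2}\rvert+r-1$, and then bound the volume of that shell, $\omega_{n}\bigl((1+\delta)^{n}-1\bigr)\leq n\,3^{\,n-1}\omega_{n}\,\delta$ (with $\omega_{n}$ the volume of the unit ball), using $\delta<2$. The paper instead splits $B_{r}(x_{2})\smallsetminus B_{1}(x_{1})\subset\bigl(B_{1}(x_{2})\smallsetminus B_{1}(x_{1})\bigr)\cup\bigl(B_{r}(x_{2})\smallsetminus B_{1}(x_{2})\bigr)$, treating the pure-translation piece by a symmetry/Fubini argument (its volume equals that of the slab $\{x\in B_{1}(x_{1})\mid\lvert t\rvert\leq d/2\}$, hence is at most $\omega_{n-1}d$) and the pure-dilation piece by $\omega_{n}(r^{n}-1)\leq\omega_{n}(2^{n}-1)(r-1)$. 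What each buys: your version is a one-shot argument with a single constant and no case distinction between the two sources of discrepancy, while the paper's separation of translation and dilation gives a slightly sharper constant; since only the linear dependence on $\lvert x_{1}-x_{2}\rvert+r-1$ is ever used, the difference is immaterial. For \eqref{eq:35} and \eqref{eq:42} the paper merely invokes ``an obvious geometric argument,'' and your explicit computation via $\operatorname{dist}(x,\partial B_{\rho}(z))=\bigl\lvert\,\lvert x-z\rvert-\rho\,\bigr\rvert$ together with the two-case triangle-inequality analysis (consuming $r\geq1$ exactly where you say) is precisely the intended argument, so there you are simply supplying the omitted details.
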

\begin{proof}
Let $\omega_{k}$ denote the volume of the unit ball in $\mathbb{R}^{k}$. Set
$d:=\left\vert x_{1}-x_{2}\right\vert $. Without loss of generality we may
suppose that $x_{1}=0$ and $x_{2}=(d,0,\dots,0)$. Set $B_{1}:=B_{1}(0).$ Since
$B_{r}(x_{2})\smallsetminus B_{1}\subset(B_{1}(x_{2})\smallsetminus B_{1}
)\cup(B_{r}(x_{2})\smallsetminus B_{1}(x_{2}))$ and $r\in\lbrack1,2], $ we
have that
\begin{align*}
\text{vol}_{n}(B_{r}(x_{2})\smallsetminus B_{1})  &  \leq\text{vol}_{n}
(B_{1}(x_{2})\smallsetminus B_{1})+\omega_{n}(r^{n}-1)\\
&  \leq\text{vol}_{n}(B_{1}\smallsetminus B_{1}(x_{2}))+\omega_{n}
(2^{n}-1)(r-1).
\end{align*}
Write $x=(t,y)\in\mathbb{R}^{n}$ with $t\in\mathbb{R}$ and $y\in
\mathbb{R}^{n-1}.$ By symmetry considerations,
\begin{equation*}
\text{vol}_{n}(B_{1}\smallsetminus B_{1}(x_{2}))=\text{vol}_{n}\{x\in
B_{1}\mid\left\vert t\right\vert \leq d/2\}\leq\omega_{n-1}d.
\end{equation*}
Together with the previous inequality, this proves \eqref{eq:53}. An obvious
geometric argument proves \eqref{eq:35} and \eqref{eq:42}.
\end{proof}
\subsection{Analysis of linear operators and the limit problem}
\label{sec:analys-line-oper}Next we will show that $-\Delta+\lambda$ satisfies
the strong maximum principle on $\mathbb{L}$ and $\Omega_{R}$ for $R$ large if
$\lambda>-\lambda_{1,1}$.
For $r>0$ let $\lambda_{1,r}$ denote the smallest Dirichlet eigenvalue of
$-\Delta$ in the open ball $B_{r}^{N-1}:=\{\eta\in\mathbb{R}^{N-1}\mid
\lvert\eta\rvert<r\}$ of radius $r$ in $\mathbb{R}^{N-1}$, and let
$\vartheta_{1,r}$ be the positive eigenfunction corresponding to
$\lambda_{1,r}$, normalized by $\lVert\vartheta_{1,r}\rVert_{L^{2}}=1$. The
following result is well known.
\begin{lemma}
\label{lem:schroed-invertible} If $\lambda_{1}(\mathbb{L})$ denotes the bottom
of the spectrum of $-\Delta$ in $L^{2}(\mathbb{L})$ with Dirichlet boundary
conditions, then $\lambda_{1}(\mathbb{L})=\lambda_{1,1}$.
\end{lemma}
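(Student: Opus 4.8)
The plan is to compute the bottom of the spectrum directly from its variational characterization,
\[
\lambda_{1}(\mathbb{L})=\inf\Bigl\{\,\textstyle\int_{\mathbb{L}}\lvert\nabla u\rvert^{2}\,\Big/\,\int_{\mathbb{L}}u^{2}\ :\ u\in H_{0}^{1}(\mathbb{L}),\ u\neq0\,\Bigr\},
\]
and to prove the two inequalities $\lambda_{1}(\mathbb{L})\geq\lambda_{1,1}$ and $\lambda_{1}(\mathbb{L})\leq\lambda_{1,1}$ by separating the $\xi$- and the $\eta$-variables. Since $\mathbb{L}=\mathbb{R}\times B_{1}^{N-1}$, this is the standard product-domain computation, which is why the statement is quoted as well known.

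For the lower bound I would take $u\in C_{c}^{\infty}(\mathbb{L})$ and write
\[
\int_{\mathbb{L}}\lvert\nabla u\rvert^{2}=\int_{\mathbb{R}}\int_{B_{1}^{N-1}}\bigl(\lvert\partial_{\xi}u\rvert^{2}+\lvert\nabla_{\eta}u\rvert^{2}\bigr)\,\mathrm{d}\eta\,\mathrm{d}\xi\geq\int_{\mathbb{R}}\int_{B_{1}^{N-1}}\lvert\nabla_{\eta}u\rvert^{2}\,\mathrm{d}\eta\,\mathrm{d}\xi.
\]
For a.e.\ fixed $\xi$ the slice $u(\xi,\cdot)$ lies in $H_{0}^{1}(B_{1}^{N-1})$, so the variational characterization of $\lambda_{1,1}$ gives $\int_{B_{1}^{N-1}}\lvert\nabla_{\eta}u(\xi,\cdot)\rvert^{2}\geq\lambda_{1,1}\int_{B_{1}^{N-1}}\lvert u(\xi,\cdot)\rvert^{2}$; integrating in $\xi$ (Fubini) yields $\int_{\mathbb{L}}\lvert\nabla u\rvert^{2}\geq\lambda_{1,1}\int_{\mathbb{L}}u^{2}$. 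Density of $C_{c}^{\infty}(\mathbb{L})$ in $H_{0}^{1}(\mathbb{L})$ then gives $\lambda_{1}(\mathbb{L})\geq\lambda_{1,1}$.

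For the upper bound I would exhibit an almost-minimizing sequence of product form. Fix $\psi\in C_{c}^{\infty}(\mathbb{R})\setminus\{0\}$, set $\varphi_{n}(\xi):=\psi(\xi/n)$, and let $u_{n}(\xi,\eta):=\varphi_{n}(\xi)\,\vartheta_{1,1}(\eta)\in H_{0}^{1}(\mathbb{L})$. Using $\lVert\vartheta_{1,1}\rVert_{L^{2}}=1$ and $\int_{B_{1}^{N-1}}\lvert\nabla_{\eta}\vartheta_{1,1}\rvert^{2}=\lambda_{1,1}$, a direct computation gives
\[
\frac{\int_{\mathbb{L}}\lvert\nabla u_{n}\rvert^{2}}{\int_{\mathbb{L}}u_{n}^{2}}=\frac{\int_{\mathbb{R}}\lvert\varphi_{n}'\rvert^{2}}{\int_{\mathbb{R}}\lvert\varphi_{n}\rvert^{2}}+\lambda_{1,1}=\frac{n^{-2}\int_{\mathbb{R}}\lvert\psi'\rvert^{2}}{\int_{\mathbb{R}}\lvert\psi\rvert^{2}}+\lambda_{1,1}\xrightarrow[n\to\infty]{}\lambda_{1,1},
\]
so $\lambda_{1}(\mathbb{L})\leq\lambda_{1,1}$. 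Combined with the lower bound this proves $\lambda_{1}(\mathbb{L})=\lambda_{1,1}$.

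There is no genuine obstacle here. The only two points deserving a word of care are that for $u\in H_{0}^{1}(\mathbb{L})$ the slices $u(\xi,\cdot)$ belong to $H_{0}^{1}(B_{1}^{N-1})$ for a.e.\ $\xi$, so that the slice-wise eigenvalue inequality may legitimately be applied and integrated, and that $\vartheta_{1,1}\in H_{0}^{1}(B_{1}^{N-1})$ makes each $u_{n}$ an admissible test function in $H_{0}^{1}(\mathbb{L})$.
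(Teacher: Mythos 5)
Your proof is correct: the slicing argument gives $\lambda_{1}(\mathbb{L})\geq\lambda_{1,1}$, the product test functions $\psi(\xi/n)\vartheta_{1,1}(\eta)$ give the reverse inequality, and the scaling computation of the Rayleigh quotient is right. The paper offers no proof at all here --- it simply records the lemma as well known --- and what you wrote is exactly the standard separation-of-variables argument that this citation implicitly relies on, so there is nothing to reconcile between the two.
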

Next, we construct a positive superharmonic function for $-\Delta+\lambda$ in
$\Omega_{R}$ for $R$ large. This allows to estimate the bottom of the spectrum
of $-\Delta$ in $L^{2}(\Omega_{R})$ from below and provides a maximum
principle for $-\Delta+\lambda.$
As before, we write a point in $\mathbb{R}^{N}$ as $(\xi,\eta)$, where $\xi
\in\mathbb{R}$ and $\eta\in\mathbb{R}^{N-1}.$
\begin{lemma}
\label{lem:existence-positive-supersolution} If $\lambda>-\lambda_{1,1},$
there exists a superharmonic function for $-\Delta+\lambda$ in $C^{2}
(\mathbb{L})\cap C(\overline{\mathbb{L}})$ which is positive on $\overline
{\mathbb{L}}$. If $R$ is large enough then there exists a superharmonic
function for $-\Delta+\lambda$ in $C^{2}(\Omega_{R})\cap C(\overline
{\Omega_{R}})$ which is positive on $\overline{\Omega_{R}}$.
\end{lemma}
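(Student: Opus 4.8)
The plan is to construct an explicit positive supersolution on $\mathbb{L}$ first, and then perturb it to obtain one on $\Omega_R$ for $R$ large. On the cylinder $\mathbb{L}$, Lemma~\ref{lem:schroed-invertible} gives that the bottom of the Dirichlet spectrum of $-\Delta$ is $\lambda_{1,1}$, with positive first eigenfunction $\vartheta_{1,1}(\eta)$ of $-\Delta_\eta$ on the unit ball $B_1^{N-1}$, normalized so that $\vartheta_{1,1}>0$ in $B_1^{N-1}$. Since $\lambda>-\lambda_{1,1}$, I would pick $\varepsilon>0$ small enough that $\lambda_{1,1}+\lambda-\varepsilon^2>0$ and set
\begin{equation*}
\varphi(\xi,\eta):=\cosh(\varepsilon\xi)\,\vartheta_{1,1}(\eta).
\end{equation*}
A direct computation gives $-\Delta\varphi+\lambda\varphi=(\lambda_{1,1}+\lambda-\varepsilon^2)\varphi>0$ in $\mathbb{L}$. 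However, $\varphi$ vanishes on $\partial\mathbb{L}$, so it is not positive on $\overline{\mathbb{L}}$; to fix this I would add a small constant multiple of a function that is superharmonic for $-\Delta+\lambda$ and strictly positive up to the boundary in the $\eta$-directions. Concretely, since $\lambda_{1,1}$ is the first eigenvalue on the unit ball and $B_1^{N-1}\subset B_\rho^{N-1}$ for $\rho>1$ slightly larger, the function $\vartheta_{1,\rho}(\eta)$ satisfies $-\Delta\vartheta_{1,\rho}+\lambda\vartheta_{1,\rho}=(\lambda_{1,\rho}+\lambda)\vartheta_{1,\rho}$, which is $>0$ as long as $\rho$ is close enough to $1$ that $\lambda_{1,\rho}>-\lambda$ (possible by continuity of $\rho\mapsto\lambda_{1,\rho}$ and $\lambda>-\lambda_{1,1}$); moreover $\vartheta_{1,\rho}>0$ on all of $\overline{B_1^{N-1}}$. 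Then, for any choice of the still-free parameter $\varepsilon$ (chosen small enough that also $\lambda_{1,1}+\lambda-\varepsilon^2>0$), the function
\begin{equation*}
\Phi(\xi,\eta):=\vartheta_{1,\rho}(\eta)+\delta\cosh(\varepsilon\xi)\,\vartheta_{1,1}(\eta)
\end{equation*}
with any $\delta>0$ is in $C^2(\mathbb{L})\cap C(\overline{\mathbb{L}})$, is strictly positive on $\overline{\mathbb{L}}$, and satisfies $-\Delta\Phi+\lambda\Phi>0$ in $\mathbb{L}$. This settles the first assertion. (In fact $\vartheta_{1,\rho}$ alone already works for the first claim; I keep the $\cosh$ term because a variant of it is what adapts to $\Omega_R$.)

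For $\Omega_R$, the idea is to pull back an analogous construction through the tubular coordinates. For $R$ large, $\Omega_R$ is the image of $(t,v)\mapsto R\gamma(t)+v$ with $v$ in the unit disk of the normal space at $R\gamma(t)$; in these Fermi-type coordinates the flat metric on $\Omega_R$ is a perturbation, of size $O(1/R)$ in $C^1$, of the product metric $dt^2\oplus(\text{Euclidean on the unit }(N-1)\text{-disk})$, because the curvature of $R\gamma$ is $O(1/R)$ (the length of the parameter interval rescales, but the \emph{geometry} — curvature and the metric coefficients — flattens as $R\to\infty$; this uses $\gamma\in C^3$). Hence the Laplace–Beltrami operator on $\Omega_R$ in these coordinates equals $\partial_t^2+\Delta_v+$ (lower-order terms with coefficients $O(1/R)$ uniformly, plus a zeroth-order curvature term of size $O(1/R)$). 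I would then take the candidate supersolution to be $\widetilde\Phi(t,v):=\vartheta_{1,\rho}(v)$ pulled back via these coordinates (note that $\vartheta_{1,\rho}$, being defined on the slightly larger disk $B_\rho^{N-1}$ and radial, extends smoothly and positively across the unit disk, which is exactly where we need positivity up to $\partial\Omega_R$). Applying $-\Delta+\lambda$ gives $(\lambda_{1,\rho}+\lambda)\vartheta_{1,\rho}(v)+O(1/R)\cdot(\text{bounded in }C^2\text{ norm of }\vartheta_{1,\rho})$; since $\lambda_{1,\rho}+\lambda>0$ and $\vartheta_{1,\rho}$ is bounded below by a positive constant on $\overline{B_1^{N-1}}$, the main term dominates the $O(1/R)$ error once $R$ is large enough, giving $-\Delta\widetilde\Phi+\lambda\widetilde\Phi>0$ in $\Omega_R$ and $\widetilde\Phi>0$ on $\overline{\Omega_R}$. (If boundary effects near $\partial\Gamma_R$ cause trouble in the open-end case, one caps them off using the closed-curve comparison or simply notes that near the ends $\Omega_R$ still fits inside a slightly fatter tube whose first transversal eigenvalue is $>-\lambda$; the $\cosh$-type factor from the first part can be inserted to keep positivity along the axis if needed.)

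The step I expect to be the main obstacle is making rigorous the claim that the pulled-back Laplacian on $\Omega_R$ differs from the model operator $\partial_t^2+\Delta_v$ by terms whose coefficients are $O(1/R)$ uniformly in $C^2(\overline{B_1^{N-1}})$ of the test function — i.e., the curvature/metric expansion of the expanded tube. This is where $\gamma\in C^3$ and the normal-coordinate structure from \eqref{eq:72} are used, and one must check that the Jacobian of the coordinate map stays bounded away from $0$ and that its derivatives are controlled, uniformly for $R\ge R_0$. Once that expansion is in hand, the rest is the elementary spectral comparison above. Everything else — choosing $\rho$ and $\varepsilon$, verifying strict positivity on the closed tube, and the sign of the leading coefficient — is routine.
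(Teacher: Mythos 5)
Your construction is essentially the paper's own proof: on $\mathbb{L}$ the paper also takes $W(\xi,\eta)=\vartheta_{1,r}(\eta)$ with $r>1$ chosen so that $\lambda_{1,r}+\lambda>0$ (your extra $\cosh(\varepsilon\xi)$ term is harmless but unnecessary, as you note), and on $\Omega_R$ it likewise pulls this transversal eigenfunction back through tubular (Fermi-type) coordinates, writing $W(x)=\vartheta_{1,r}(\mathrm{dist}(x,\Gamma_R))$ and showing via an explicit diffeomorphism $\Phi_R$ that the curvature-induced error is $O(\lvert \mathrm{D}^2\Phi_R\rvert)\to 0$ uniformly as $R\to\infty$, so the term $(\lambda_{1,r}+\lambda)\vartheta_{1,r}$ dominates for $R$ large. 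The step you flag as the main obstacle is exactly what the paper's computation of $\mathrm{D}\Phi_R$ and $\mathrm{D}^2\Phi_R$ supplies, so the proposal is correct and follows the same route.
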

\begin{proof}
We fix $r>1$ close enough to $1$ so that $\lambda_{1,r}+\lambda>0$. Then
$W(\xi,\eta):=\vartheta_{1,r}(\eta)$ satisfies
\begin{equation*}
(-\Delta+\lambda)W=(\lambda_{1,r}+\lambda)W>0\text{ in }\mathbb{L}\text{
\ \ and \ \ }\min_{\overline{\mathbb{L}}}W>0.
\end{equation*}
This proves the first assertion.
To prove the second one note first that, for $R\geq1$ large enough, the set
$\Omega_{R,r}:=\{x\in\mathbb{R}^{N}\mid$dist$(x,\Gamma_{R})<r\}$ is a tubular
neighborhood of $\Gamma_{R}$. Since $\vartheta_{1,r}$ is radial, we may write
$\vartheta_{1,r}(\eta)=\vartheta_{1,r}(\left\vert \eta\right\vert )$ and
define
\begin{equation*}
W(x):=\vartheta_{1,r}(\text{dist}(x,\Gamma_{R}))\text{\quad for }x\in
\Omega_{R,r}.
\end{equation*}
Clearly, $\min\limits_{\overline{\Omega_{R}}}W>0$ for $R$ large enough. We
claim that
\begin{equation}
W\in C^{2}(\Omega_{R})\cap C(\overline{\Omega_{R}}) \label{eq:99}
\end{equation}
and
\begin{equation}
\min_{\Omega_{R}}\left(  {(-\Delta+\lambda)W}\right)  >0 \label{eq:100}
\end{equation}
for $R$ large enough. To prove this claims we fix $y_{0}\in\Omega_{R}$ and we
define locally, around $y_{0},$ a diffeomorphism from $\Omega_{R}$ to the unit
normal bundle of $\Gamma_{R}$ as follows: after a change of coordinates we may
assume that $0\in\Gamma_{R}$ and that dist$(y_{0},\Gamma_{R})=\left\vert
y_{0}\right\vert .$ We may also assume that the tangent space to $\Gamma_{R}$
at $0$ is $\mathbb{R}\times\{0\}.$ Then, $y_{0}\in\{0\}\times\mathbb{R}
^{N-1}.$ Let $\tau\colon(-\varepsilon,\varepsilon)\rightarrow\mathbb{R}^{N}$
be a parametrization by arc length of $\Gamma$ such that $\tau(0)=0$ and
$\tau^{\prime}(0)=(1,0).$ For $\xi\in(-R\varepsilon,R\varepsilon)$ and
$\eta\in\mathbb{R}^{N-1},$ set $\tau_{R}(\xi):=R\tau(\frac{\xi}{R})$ and let
$h_{R}(\xi,\eta)$ be the orthogonal projection of $(0,\eta)$ onto the space
$\tau^{\prime}(\xi)^{\perp}=\{x\in\mathbb{R}^{N}:x\cdot\tau^{\prime}(\frac
{\xi}{R})=0\}.$ Now define
\begin{equation*}
\Phi_{R}(\xi,\eta):=\tau_{R}\left(  \xi\right)  +\frac{\left\vert
\eta\right\vert }{\left\vert h_{R}(\xi,\eta)\right\vert }h_{R}(\xi,\eta).
\end{equation*}
Note that $\Phi_{R}(0,\eta)=(0,\eta)$. Moreover,
\begin{equation}
\mathrm{D}\Phi_{R}(0,\eta)=
\begin{pmatrix}
1-\frac{1}{R}\left[  (0,\eta)\cdot\tau^{\prime\prime}(0)\right]  & 0\\
0 & I_{N-1}
\end{pmatrix}
. \label{eq:87}
\end{equation}
Therefore, $\Phi_{R}$ is a $C^{2}$-diffeomorphism between neighborhoods of
$\overline{\{0\}\times B_{1}^{N-1}}$ for $R$ large enough. Note that, since
$h_{R}(\xi,\eta)$ is orthogonal to $\Gamma_{R}$ at $\tau_{R}(\xi),$
\begin{equation*}
\text{dist}(\Phi_{R}(\xi,\eta),\Gamma_{R})=\left\vert \Phi_{R}(\xi,\eta
)-\frac{1}{R}\tau(\xi)\right\vert =\left\vert \eta\right\vert .
\end{equation*}
This implies that
\begin{equation*}
W(\Phi_{R}(\xi,\eta))=\vartheta_{1,r}(\left\vert \eta\right\vert ).
\end{equation*}
So, since $\Phi_{R}$ is a local $C^{2}$-diffeomorphism at $y_{0}$, this
identity proves \eqref{eq:99}.
To prove \eqref{eq:100} it is enough to show that
\begin{equation}
(-\Delta+\lambda)W(0,\eta)\geq C>0 \label{eq:88}
\end{equation}
for $\eta\in B_{1}^{N-1}$ and large $R$, where $C$ is independent of $y_{0}$
and $R$. A straightforward computation shows that
\begin{equation*}
(-\Delta+\lambda)W(0,\eta)=\left(  {\lambda_{1,r}+\lambda}\right)
\vartheta_{1,r}(\eta)+O(\left\vert \mathrm{D}^{2}\Phi_{R}(0,\eta)\right\vert
),
\end{equation*}
independently of $y_{0}$, and that that
\begin{equation}
\mathrm{D}^{2}\Phi_{R}(0,\eta)\rightarrow0\qquad\text{as $R\rightarrow\infty$,
independently of $y_{0}$ and $\eta$.} \label{eq:102}
\end{equation}
Since $\vartheta_{1,r}$ is positive and continuous on $\overline{B_{1}^{N-1}}$
we may set
\begin{equation*}
C:=\frac{\lambda_{1,r}+\lambda}{2}\min_{\left\vert \eta\right\vert \leq
1}\vartheta_{1,r}(\eta)>0
\end{equation*}
and obtain \eqref{eq:88}.
\end{proof}
\begin{corollary}
\label{cor:spectrum-positive-omega-r} If $\lambda_{1}(\Omega_{R})$ denotes the
bottom of the spectrum of $-\Delta$ in $L^{2}(\Omega_{R})$ with Dirichlet
boundary conditions, then
\begin{equation*}
\liminf_{R\rightarrow\infty}\lambda_{1}(\Omega_{R})\geq\lambda_{1,1}.
\end{equation*}
\end{corollary}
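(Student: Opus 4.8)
The plan is to combine the positive supersolution furnished by Lemma~\ref{lem:existence-positive-supersolution} with the classical ground state transformation, and then to let $\lambda$ decrease to $-\lambda_{1,1}$.

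Fix $\lambda>-\lambda_{1,1}$. By the second part of Lemma~\ref{lem:existence-positive-supersolution} there is $R_{\lambda}>0$ so that for every $R\geq R_{\lambda}$ one has a function $W=W_{R}\in C^{2}(\Omega_{R})\cap C(\overline{\Omega_{R}})$ with $g:=(-\Delta+\lambda)W>0$ in $\Omega_{R}$ and $\min_{\overline{\Omega_{R}}}W>0$; in particular $W$ is bounded, since $\overline{\Omega_{R}}$ is compact. Given $\varphi\in C_{c}^{\infty}(\Omega_{R})$, put $\psi:=\varphi/W$, which is $C^{2}$ with compact support in $\Omega_{R}$ because $W$ is $C^{2}$ and bounded away from $0$ on $\operatorname{supp}\varphi$. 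Writing $\nabla\varphi=\psi\nabla W+W\nabla\psi$, expanding the square, using $\psi^{2}\lvert\nabla W\rvert^{2}+2\psi W\nabla W\cdot\nabla\psi=\nabla W\cdot\nabla(\psi^{2}W)$, and integrating by parts (which is classical here, since $\psi^{2}W$ is $C^{2}$ with compact support in $\Omega_{R}$ and $-\Delta W=g-\lambda W$ pointwise in $\Omega_{R}$), one gets
\begin{equation*}
\int_{\Omega_{R}}\bigl(\lvert\nabla\varphi\rvert^{2}+\lambda\varphi^{2}\bigr)=\int_{\Omega_{R}}g\,W\psi^{2}+\int_{\Omega_{R}}W^{2}\lvert\nabla\psi\rvert^{2}\geq0 .
\end{equation*}
By density this holds for every $\varphi\in H_{0}^{1}(\Omega_{R})$, so the variational characterization of $\lambda_{1}(\Omega_{R})$ gives $\lambda_{1}(\Omega_{R})\geq-\lambda$ for all $R\geq R_{\lambda}$, whence $\liminf_{R\to\infty}\lambda_{1}(\Omega_{R})\geq-\lambda$.

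Since $\lambda>-\lambda_{1,1}$ was arbitrary, taking the supremum over such $\lambda$ (equivalently, letting $\lambda\downarrow-\lambda_{1,1}$) yields $\liminf_{R\to\infty}\lambda_{1}(\Omega_{R})\geq\lambda_{1,1}$. The only point requiring care is the order of the quantifiers: the threshold $R_{\lambda}$ coming from Lemma~\ref{lem:existence-positive-supersolution} cannot be taken uniform in $\lambda$ (the radius $r>1$ used to build $W$ must be chosen closer to $1$ as $\lambda\downarrow-\lambda_{1,1}$, which forces $R_{\lambda}\to\infty$), so one must estimate the $\liminf$ separately for each admissible $\lambda$ before passing to the limit. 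The ground state transformation and the integration by parts themselves are routine.
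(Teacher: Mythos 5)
Your proof is correct and is precisely the ``standard argument'' the paper alludes to: it invokes Lemma~\ref{lem:existence-positive-supersolution} and converts the positive supersolution into positivity of the quadratic form of $-\Delta+\lambda$ (a Barta/ground-state-transformation inequality), then lets $\lambda\downarrow-\lambda_{1,1}$, with the quantifier order handled correctly. Nothing further is needed.
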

\begin{proof}
A standard argument, using Lemma~\ref{lem:existence-positive-supersolution},
proves this claim.
\end{proof}
The following fact will play a crucial role to obtain asymptotic estimates for
the energy functional and its gradient.
\begin{corollary}
\label{rem:maximum-principle} If $\lambda>-\lambda_{1,1}$ the operator
$-\Delta+\lambda$ satisfies the strong maximum principle in any subdomain of
$\mathbb{L}$ and in any subdomain of $\Omega_{R}$ for $R$ large enough.
\end{corollary}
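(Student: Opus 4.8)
The plan is to deduce the strong maximum principle from the existence of a positive superharmonic function, via the classical weighted-transformation argument. Fix $M$ to be $\mathbb{L}$, or $\Omega_{R}$ with $R$ large, and let $W$ be the positive superharmonic function for $-\Delta+\lambda$ provided by Lemma~\ref{lem:existence-positive-supersolution}; recall from its proof that $W$ is of class $C^{2}$ and bounded away from $0$ and from $\infty$ on a full neighborhood of $\overline{M}$, and that $g:=(-\Delta+\lambda)W>0$ there.

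First I would record the substitution identity: for any subdomain $D\subseteq M$ and any $u\in C^{2}(D)$, writing $v:=u/W$ one computes
\begin{equation*}
\frac{1}{W}\bigl((-\Delta+\lambda)u\bigr)=-\Delta v-\frac{2\nabla W}{W}\cdot\nabla v+\frac{g}{W}\,v=:\mathcal{L}v .
\end{equation*}
Since $W\in C^{2}$ and $0<\inf_{\overline{M}}W\le\sup_{\overline{M}}W<\infty$, the drift coefficient $2\nabla W/W$ is locally bounded on $M$ (and bounded up to the boundary, by the explicit formulas for $W$), and the zeroth order coefficient $g/W$ is nonnegative. Thus $\mathcal{L}$ is a uniformly elliptic operator of the form to which the classical strong maximum principle and the Hopf boundary-point lemma apply, on every subdomain.

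Next I would invoke those classical results for $\mathcal{L}$: if $\mathcal{L}v\ge 0$ in $D$ and $v$ attains a nonpositive infimum at an interior point, then $v$ is constant on the connected component; and if, in addition, $D$ has an interior sphere at a boundary point where such a value is attained, then the outward normal derivative of $v$ there is strictly negative unless $v$ is constant. Because $W>0$, the functions $u$ and $v$ have the same sign, the same zero set, and normal derivatives of the same sign, so each of these statements translates verbatim into the corresponding statement for $-\Delta+\lambda$ and $u$ on $D$. This gives the strong maximum principle for $-\Delta+\lambda$ on $D$; for bounded $D$ the same weight $W$ also yields the comparison (weak maximum) principle in the usual way.

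I do not expect any real obstacle: all the substance is in Lemma~\ref{lem:existence-positive-supersolution}, and the remainder is the standard reduction. The only points needing attention are that $W$ is genuinely $C^{2}$ and bounded below by a positive constant up to $\partial M$—both clear from the explicit expressions $W(\xi,\eta)=\vartheta_{1,r}(\eta)$ and $W(x)=\vartheta_{1,r}(\text{dist}(x,\Gamma_{R}))$ in the proof of that lemma, since $\overline{B_{1}^{N-1}}\subset B_{r}^{N-1}$—and that the strong maximum principle is a local statement, so no regularity of $\partial D$ (beyond an interior sphere where the Hopf conclusion is wanted) and no boundedness of $D$ is required.
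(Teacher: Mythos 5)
Your argument is correct and is essentially the paper's proof: the paper deduces the corollary from Lemma~\ref{lem:existence-positive-supersolution} by citing a classical theorem asserting exactly that a positive strict supersolution yields the strong maximum principle, and your weighted substitution $v=u/W$ is the standard proof of that cited result. Nothing further is needed.
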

\begin{proof}
This follows from Lemma~\ref{lem:existence-positive-supersolution} and
\cite[Theorem~1]{MR0271508}.
\end{proof}
We shall also need the following decay estimates for the solutions $U^{\pm}$
to the limit problem \eqref{eq:2}. They follow immediately from
\cite[Proposition~4.2]{MR1039342}.
\begin{lemma}
\label{decayU} There are constants $C_{1},C_{2}>0$ such that
\begin{equation*}
C_{1}\mathrm{e}^{-\mu\lvert\xi\rvert}\vartheta_{1,1}(\eta)\leq\lvert U^{\pm
}(\xi,\eta)\rvert\leq C_{2}\mathrm{e}^{-\mu\lvert\xi\rvert}\vartheta
_{1,1}(\eta)\qquad\text{for all }(\xi,\eta)\in\mathbb{L}
\end{equation*}
where $\mu:=\sqrt{\lambda+\lambda_{1,1}}.$
\end{lemma}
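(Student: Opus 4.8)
The plan is to derive both inequalities by comparison arguments, testing the solutions against exponentially weighted multiples of the cross-sectional eigenfunction $\vartheta_{1,1}$ and invoking the maximum principle of Corollary~\ref{rem:maximum-principle}. It suffices to treat $U:=U^{+}$, the case of $U^{-}$ following by replacing $f(u)$ with $-f(-u)$. Put $\mu^{2}:=\lambda+\lambda_{1,1}$. I would first collect three preliminary facts. (i) By (H2)--(H3) and elliptic bootstrapping, $U\in L^{\infty}(\mathbb{L})\cap C^{2}(\overline{\mathbb{L}})$, and since $U\in L^{2}(\mathbb{L})$, interior $L^{\infty}$--$L^{2}$ estimates on unit cubes give $\lVert U(\xi,\cdot)\rVert_{L^{\infty}}\to0$ as $\lvert\xi\rvert\to\infty$. (ii) Since $U$ and $\vartheta_{1,1}$ are $C^{1}$ up to $\partial\mathbb{L}$, vanish on $\{\lvert\eta\rvert=1\}$, and (by Hopf's lemma, valid through Corollary~\ref{rem:maximum-principle}) have nonvanishing normal derivative there, the quotient $U/\vartheta_{1,1}$ extends to a continuous, strictly positive function on $\overline{\{\lvert\xi\rvert\le s\}}\cap\overline{\mathbb{L}}$ for every $s>0$; this will let me convert pointwise bounds on $U$ into bounds of the form $C\vartheta_{1,1}$ on bounded pieces of $\mathbb{L}$. (iii) For $\xi\neq0$ and $\kappa>0$ one has
\[
(-\Delta+\lambda)\bigl(\mathrm{e}^{-\kappa\lvert\xi\rvert}\vartheta_{1,1}(\eta)\bigr)=(\mu^{2}-\kappa^{2})\,\mathrm{e}^{-\kappa\lvert\xi\rvert}\vartheta_{1,1}(\eta),
\]
so this function is a strict supersolution of $-\Delta+\lambda$ when $\kappa<\mu$, while functions of the form $(A\mathrm{e}^{-\mu\lvert\xi\rvert}-B\mathrm{e}^{-\kappa\lvert\xi\rvert})\vartheta_{1,1}$ with $\kappa>\mu$ will serve as supersolutions producing exactly the rate $\mu$.

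For the \emph{lower bound}, hypothesis (H4) gives $f(U)\ge0$, so $U$ is a positive supersolution of $-\Delta+\lambda$ in $\mathbb{L}$; fixing $s_{0}>0$, by (ii) the quotient $U/\vartheta_{1,1}$ has a positive minimum $c_{0}$ on $\{\lvert\xi\rvert\le s_{0}\}\cap\overline{\mathbb{L}}$, and on $\{\lvert\xi\rvert\le s_{0}\}$ this already yields $U\ge c_{0}\vartheta_{1,1}\ge c_{0}\mathrm{e}^{-\mu\lvert\xi\rvert}\vartheta_{1,1}$. On $\{\lvert\xi\rvert>s_{0}\}$ I would compare $U$ with $V:=c_{0}\mathrm{e}^{-\mu\lvert\xi\rvert}\vartheta_{1,1}$, which solves $(-\Delta+\lambda)V=0$: then $U-V$ is a supersolution of $-\Delta+\lambda$ on $\{s_{0}<\lvert\xi\rvert<T\}\cap\mathbb{L}$, is $\ge0$ on $\{\lvert\xi\rvert=s_{0}\}$ and on $\{\lvert\eta\rvert=1\}$, and on $\{\lvert\xi\rvert=T\}$ satisfies $(V-U)^{+}\le\lVert V\rVert_{L^{\infty}(\{\lvert\xi\rvert=T\})}\to0$; letting $T\to\infty$ and using Corollary~\ref{rem:maximum-principle} gives $U\ge V$ on $\{\lvert\xi\rvert>s_{0}\}$. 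This is the left-hand inequality, with $C_{1}=c_{0}$.

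For the \emph{upper bound} I would first establish, for each $\mu'\in(\mu/p_{1},\mu)$ (a nonempty interval since $p_{1}>1$), the almost optimal bound $U\le C(\mu')\mathrm{e}^{-\mu'\lvert\xi\rvert}\vartheta_{1,1}$. Indeed, (H3) and the boundedness of $U$ give $f(U)\le CU^{p_{1}}\le\delta(s)U$ on $\{\lvert\xi\rvert>s\}$ with $\delta(s):=C\lVert U\rVert_{L^{\infty}(\{\lvert\xi\rvert>s\})}^{p_{1}-1}\to0$, so that $-\Delta U+(\lambda-\delta(s))U\le0$ there; choosing $s$ so large that $\delta(s)<\mu^{2}-\mu'^{2}$ and $\lambda-\delta(s)>-\lambda_{1,1}$, the function $W:=\mathrm{e}^{-\mu'\lvert\xi\rvert}\vartheta_{1,1}$ is a positive supersolution of $-\Delta+(\lambda-\delta(s))$ on $\{\lvert\xi\rvert>s\}$, and comparing $U$ with $AW$ (with $A$ large enough that $AW\ge U$ on $\{\lvert\xi\rvert=s\}$, using (ii)) and letting $\lvert\xi\rvert\to\infty$ via the same truncation yields $U\le AW$ on $\{\lvert\xi\rvert>s\}$; the bounded part of $\mathbb{L}$ is handled by (ii) again. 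I would then bootstrap once: with $\kappa:=p_{1}\mu'>\mu$, this bound gives $f(U)\le CU^{p_{1}}\le C'\mathrm{e}^{-\kappa\lvert\xi\rvert}\vartheta_{1,1}$, and the function $W:=(A\mathrm{e}^{-\mu\lvert\xi\rvert}-B\mathrm{e}^{-\kappa\lvert\xi\rvert})\vartheta_{1,1}$ satisfies $(-\Delta+\lambda)W=B(\kappa^{2}-\mu^{2})\mathrm{e}^{-\kappa\lvert\xi\rvert}\vartheta_{1,1}\ge f(U)$ on $\{\xi\neq0\}$ once $B$ is large; choosing then $A$ large enough that $W\ge U$ on $\{\lvert\xi\rvert=s\}$, the maximum principle for $-\Delta+\lambda$ gives $U\le W\le A\mathrm{e}^{-\mu\lvert\xi\rvert}\vartheta_{1,1}$ on $\{\lvert\xi\rvert>s\}$, whence $U\le C_{2}\mathrm{e}^{-\mu\lvert\xi\rvert}\vartheta_{1,1}$ on $\mathbb{L}$.

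The routine parts are the elliptic regularity and the truncation at $\lvert\xi\rvert=T$. The genuinely delicate points, on which I would spend most care, are the degeneracy of the weight $\vartheta_{1,1}$ on the lateral boundary $\{\lvert\eta\rvert=1\}$ --- controlled by the boundary regularity of $U$ together with Hopf's lemma, which keep $U/\vartheta_{1,1}$ pinched between two positive constants on bounded pieces of $\mathbb{L}$ --- and the upgrade of the exponent from every $\mu'<\mu$ to the sharp value $\mu$, which is where the subcriticality $p_{1}>1$ enters, through the single extra bootstrap. (Of course, if one is willing to cite the decay theory on cylinders directly, as the paper does, the statement is immediate from \cite[Proposition~4.2]{MR1039342}.)
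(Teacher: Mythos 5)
Your proof is correct, but it follows a genuinely different route from the paper: the paper disposes of this lemma in one line, citing \cite[Proposition~4.2]{MR1039342} for the decay theory of solutions on cylinders, whereas you reprove the two-sided bound from scratch with barrier arguments. Your scheme is sound: the lower bound compares the supersolution $U^{+}$ (here (H4) gives $f(U^{+})\geq0$) with the exact solution $\mathrm{e}^{-\mu\lvert\xi\rvert}\vartheta_{1,1}$ of the linear equation away from $\xi=0$; the upper bound first extracts a suboptimal rate $\mu'<\mu$ by absorbing $f(U^{+})\leq\delta(s)U^{+}$ into a shifted operator $-\Delta+(\lambda-\delta(s))$ (legitimate since $\lambda-\delta(s)>-\lambda_{1,1}$ for $s$ large, so the supersolution construction of Lemma~\ref{lem:existence-positive-supersolution} still applies), and then one bootstrap with $\kappa=p_{1}\mu'>\mu$ and the barrier $(A\mathrm{e}^{-\mu\lvert\xi\rvert}-B\mathrm{e}^{-\kappa\lvert\xi\rvert})\vartheta_{1,1}$ reaches the sharp exponent — this is exactly where $p_{1}>1$ enters, and your use of Hopf's lemma to pinch $U^{\pm}/\vartheta_{1,1}$ between positive constants on bounded slabs correctly handles the degeneracy of the weight at $\lvert\eta\rvert=1$. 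The two steps you call routine really are, but note that the truncation at $\lvert\xi\rvert=T$ needs a word when $\lambda<0$: constants are not supersolutions of $-\Delta+\lambda$, so the small error on $\{\lvert\xi\rvert=T\}$ should be absorbed by adding $\delta_{T}\vartheta_{1,r}(\eta)$ (the positive strict supersolution already available in the paper) rather than a constant, and then $\delta_{T}\to0$. What each approach buys: the paper's citation is shortest and leans on established cylinder asymptotics; your argument is self-contained, uses only tools already present in the paper (the maximum principle of Corollary~\ref{rem:maximum-principle} and the eigenfunction barriers), and makes visible where the hypotheses $\lambda>-\lambda_{1,1}$ and $p_{1}>1$ are actually used.
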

\section{Asymptotics of the energy and its gradient}
\label{sec:energy-estimates}
We assume from now on that $\lambda>-\lambda_{1,1}.$ Let $\mathbb{L}
_{s}:=\{(\xi,\eta)\in\mathbb{R}^{1}\times\mathbb{R}^{N-1}:\lvert\eta
\rvert<s\}.$ We fix $r_{0}>1$ such that $\lambda_{1,r_{0}}+\lambda>0$ and, for
$R>0$, $x\in\Gamma_{R}$ and $s\in\lbrack1,r_{0}],$ we set
\begin{equation*}
\mathbb{L}_{s,x}:=\{x+A_{x}^{-1}(z):z\in\mathbb{L}_{s}\}
\end{equation*}
with $A_{x}$ as in (\ref{eq:3})$.$ Note that the first eigenvalue of $-\Delta$
in $H_{0}^{1}(\Omega_{R}\cap\mathbb{L}_{s,x})$ satisfies $\lambda_{1}
(\Omega_{R}\cap\mathbb{L}_{s,x})+\lambda>0$ for large $R$, because $\Omega
_{R}\cap\mathbb{L}_{s,x}$ is an open bounded subset of $\mathbb{L}_{r_{0},x}$.
We write $V_{x,s,R}^{\pm}$ for the unique solution to the problem
\begin{equation}
\left\{
\begin{array}
[c]{ll}
-\Delta u+\lambda u=f(U_{x,R}^{\pm}) & \text{in }\Omega_{R}\cap\mathbb{L}
_{s,x},\\
u=0 & \text{on }\partial\left(  \Omega_{R}\cap\mathbb{L}_{s,x}\right)  ,
\end{array}
\right.  \label{V}
\end{equation}
with $U_{x,R}^{\pm}$ as in (\ref{eq:3}). By the maximum principle and
assuption (H4), $V_{x,s,R}^{+}$ is positive and $V_{x,s,R}^{-}$ is negative
for large $R.$ We extend $V_{x,s,R}^{\pm}$ to all of $\mathbb{R}^{N}$ by
defining it as $0$ outside of $\Omega_{R}\cap\mathbb{L}_{s,x}$.\ When $s=1$ we
omit it from the notation and write $\mathbb{L}_{x},$ $V_{x,R}$ instead of
$\mathbb{L}_{1,x},$ $V_{x,1,R}.$
\subsection{The closed tube case}
In this subsection we assume that $\gamma(0)=\gamma(1).$ The following decay
estimates hold true.
\begin{lemma}
\label{lem:decay-V}For each $s\in\lbrack1,r_{0})$ there are positive constants
$c_{3},c_{4}$ and $R_{0},$ independent of $x\in\Gamma_{R},$ such that all
quantities
\begin{equation*}
\left\vert U_{x,R}^{\pm}(y)\right\vert ,\quad\left\vert \nabla U_{x,R}^{\pm
}(y)\right\vert ,\quad\left\vert V_{x,s,R}^{\pm}(y)\right\vert ,\quad
\left\vert \nabla V_{x,s,R}^{\pm}(y)\right\vert ,
\end{equation*}
are bounded by $c_{3}e^{-c_{4}\left\vert y-x\right\vert }$ for all $R\geq
R_{0}$ and almost all $y\in\mathbb{R}^{N}.$ Moreover,
\begin{equation*}
\left\vert D^{2}U_{x,R}^{\pm}(y)\right\vert \text{\quad and\quad}\left\vert
D^{2}V_{x,s,R}^{\pm}(y)\right\vert
\end{equation*}
are bounded uniformly in $\mathbb{L}_{x}$\ and $\Omega_{R}\cap\mathbb{L}
_{s,x}$\ respectively, independently of $R\geq R_{0}.$
\end{lemma}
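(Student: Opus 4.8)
\textbf{Proof proposal for Lemma~\ref{lem:decay-V}.}

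The plan is to treat the four quantities $|U_{x,R}^{\pm}|$, $|\nabla U_{x,R}^{\pm}|$, $|V_{x,s,R}^{\pm}|$, $|\nabla V_{x,s,R}^{\pm}|$ in two groups, exploiting that the $U$'s are rigid translates/rotations of the fixed profile $U^{\pm}$ while the $V$'s must be handled via the maximum principle on $\Omega_R\cap\mathbb{L}_{s,x}$. For the $U$-group, note that by definition $U_{x,R}^{\pm}(y)=U^{\pm}(A_x(y-x))$, so $|U_{x,R}^{\pm}(y)|=|U^{\pm}(z)|$ with $|z|=|y-x|$. Lemma~\ref{decayU} gives $|U^{\pm}(\xi,\eta)|\le C_2\mathrm{e}^{-\mu|\xi|}\vartheta_{1,1}(\eta)$; since $\vartheta_{1,1}$ is bounded on $\overline{B_1^{N-1}}$ and $|\xi|$, $|\eta|$ are comparable to $|z|=|y-x|$ on the bounded-radius cylinder, one gets the bound $c_3\mathrm{e}^{-c_4|y-x|}$ with constants depending only on $U^{\pm}$ (hence independent of $x$ and $R$). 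For $|\nabla U_{x,R}^{\pm}|$ and $|D^2 U_{x,R}^{\pm}|$, since $A_x$ is an isometry these equal $|\nabla U^{\pm}(z)|$, $|D^2 U^{\pm}(z)|$; elliptic interior estimates applied to $-\Delta U^{\pm}+\lambda U^{\pm}=f(U^{\pm})$ together with (H3) and Lemma~\ref{decayU} give exponential decay of $\nabla U^{\pm}$ and a uniform bound on $D^2U^{\pm}$ on $\mathbb{L}$ (one can also absorb $\nabla$ into the same exponential after possibly shrinking $c_4$). This disposes of everything concerning $U$.

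For the $V$-group the main point is a comparison argument. The function $V_{x,s,R}^{+}$ solves \eqref{V} with right-hand side $f(U_{x,R}^{+})$, which by (H3) and the already-established decay of $U_{x,R}^{+}$ satisfies $0\le f(U_{x,R}^{+}(y))\le C\,\mathrm{e}^{-c_4 p_1|y-x|}$ on $\Omega_R\cap\mathbb{L}_{s,x}$. I would build a supersolution of the form $\Psi(y):=M\,W_R(y)\,\mathrm{e}^{-\nu\,\mathrm{dist}(y,x)}$, or more simply $M\vartheta_{1,r}(\mathrm{dist}(y,\Gamma_R))\mathrm{e}^{-\nu|y-x|}$ for suitable small $\nu\in(0,\mu)$ and large $M$, using the superharmonic function $W$ of Lemma~\ref{lem:existence-positive-supersolution} to control the transverse direction and the fixed gap $\lambda_{1,r}+\lambda>0$ to absorb the first-order error terms coming from differentiating $\mathrm{e}^{-\nu|y-x|}$; since on $\mathbb{L}_{s,x}$ with $s<r_0$ we have room ($\lambda_{1,s}+\lambda$ strictly positive), this works for $\nu$ small enough and $R$ large, uniformly in $x$. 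Then $(-\Delta+\lambda)(\Psi\mp V_{x,s,R}^{\pm})\ge0$ in $\Omega_R\cap\mathbb{L}_{s,x}$ and $\Psi\mp V_{x,s,R}^{\pm}\ge0$ on the boundary, so Corollary~\ref{rem:maximum-principle} (the strong maximum principle for $-\Delta+\lambda$, valid on subdomains of $\Omega_R$ for $R$ large) yields $|V_{x,s,R}^{\pm}(y)|\le\Psi(y)\le c_3\mathrm{e}^{-c_4|y-x|}$ after renaming constants. Finally, the gradient and second-derivative bounds for $V_{x,s,R}^{\pm}$ follow from interior elliptic $L^\infty$ and Schauder estimates: on any unit ball contained in $\Omega_R\cap\mathbb{L}_{s,x}$ one controls $\|\nabla V\|_\infty$ and $\|D^2 V\|_\infty$ by $\|V\|_\infty$ plus $\|f(U_{x,R}^{\pm})\|_{C^\alpha}$ on a slightly larger ball, both of which are already bounded (the latter uniformly, the former with the exponential weight); near $\partial(\Omega_R\cap\mathbb{L}_{s,x})$ one uses boundary elliptic estimates, noting that the boundary is uniformly $C^{2}$ for $R$ large because it is either a piece of the smooth cylinder $\partial\mathbb{L}_{s,x}$ or a piece of $\partial\Omega_R$, whose curvatures are uniformly bounded (indeed $O(1/R)$ in the longitudinal directions) by the tubular-neighborhood construction.

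The main obstacle I anticipate is the construction of the supersolution $\Psi$ with the exponential weight, i.e. checking that the cross terms $-2\nabla\vartheta_{1,r}\cdot\nabla(\mathrm{e}^{-\nu|y-x|})$ and the term $-\nu^2\mathrm{e}^{-\nu|y-x|}\vartheta_{1,r}$ produced by the Laplacian are genuinely dominated by the positive zeroth-order surplus $(\lambda_{1,r}+\lambda)\vartheta_{1,r}$ \emph{uniformly in $x\in\Gamma_R$ and in $R\ge R_0$}; this requires keeping track of how $\Phi_R$ (from the proof of Lemma~\ref{lem:existence-positive-supersolution}) distorts derivatives, but since $\mathrm{D}^2\Phi_R\to0$ uniformly in $x$ and $\eta$ by \eqref{eq:102}, the distortion is negligible for $R$ large, and one only needs $\nu$ small relative to the fixed gap $\lambda_{1,r}+\lambda$. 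A secondary, purely bookkeeping, difficulty is making all constants manifestly independent of $x$: this is automatic because the profile $U^{\pm}$ is fixed, the isometries $A_x$ preserve all the relevant norms, and the geometric quantities entering the supersolution and the elliptic estimates depend on $\Gamma_R$ only through curvature bounds that are uniform in $x$ by the smoothness and compactness of $\gamma$.
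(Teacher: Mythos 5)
Your argument for the pointwise exponential bounds is sound and is essentially the route taken in the paper: the bounds for $U_{x,R}^{\pm}$ and their derivatives come from Lemma~\ref{decayU} plus elliptic regularity, and the bound for $V_{x,s,R}^{\pm}$ comes from a positive supersolution with longitudinal exponential decay combined with the maximum principle (Corollary~\ref{rem:maximum-principle}), exactly as in the paper. The one real difference is the barrier: you propose $M\,\vartheta_{1,r}(\mathrm{dist}(y,\Gamma_R))\,\mathrm{e}^{-\nu\lvert y-x\rvert}$, which forces you to track the distortion of $\Phi_R$ from the proof of Lemma~\ref{lem:existence-positive-supersolution}; the paper avoids this entirely by working in the flat coordinates of $\mathbb{L}_{s,x}$ and taking $W(\xi,\eta)=\mathrm{e}^{-\nu\lvert\xi\rvert}\vartheta_{1,r_{0}}(\eta)$, for which the computation of $(-\Delta+\lambda)W$ is exact; since $\Omega_R\cap\mathbb{L}_{s,x}\subset\mathbb{L}_{s,x}$ one has $\vartheta_{1,r_{0}}(\eta)\geq\tilde c_{s}>0$ on the domain, and no geometry of $\Omega_R$ enters except through the maximum principle itself. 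Your version can be made to work (the uniformity you worry about is exactly \eqref{eq:102}), it is simply heavier than necessary.

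The genuine gap is in your justification of the derivative estimates for $V_{x,s,R}^{\pm}$. The set $\Omega_R\cap\mathbb{L}_{s,x}$ is \emph{not} a uniformly $C^{2}$ domain: its boundary consists of a piece of $\partial\Omega_R$ and a piece of $\partial\mathbb{L}_{s,x}$, and where these meet the domain has edges, with angles that degenerate as $R\to\infty$ (for $s=1$ the two radius-one tubes are tangent along the boundary of the normal disc at $x$; for $s>1$ any crossing occurs at longitudinal distance of order $\sqrt{R(s-1)}$, where the hypersurfaces intersect at an angle of order $\lvert\xi\rvert/R$, so the interior wedge angle is close to $\pi$). Hence your appeal to boundary Schauder estimates ``because the boundary is uniformly $C^{2}$'' fails precisely where it is needed: near such an edge the Dirichlet solution behaves like $r^{\pi/\theta}$ in the wedge variable, smooth-domain boundary theory does not apply, and in particular the uniform bound on $\lvert D^{2}V_{x,s,R}^{\pm}\rvert$ up to the boundary asserted in the lemma is not a consequence of interior plus $C^{2}$-boundary estimates (a uniform Lipschitz bound for $V_{x,s,R}^{\pm}$ could still be salvaged by a barrier or uniform exterior ball argument, but that is not the argument you gave). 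This is exactly the point where the paper does not use standard estimates: after the barrier argument it invokes regularity results for such nonsmooth domains (the reference it cites for the derivative estimates). If you keep your structure, you need either a result of that type or a separate argument near the edges.
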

\begin{proof}
Lemma \ref{decayU}, together with standard regularity estimates, yields the
estimates for $U_{x,R}^{\pm}$ and its derivatives.\newline To prove the
estimates for $V_{x,s,R}^{\pm}$ we assume without loss of generality that
$x=0$ and that $\mathbb{R}\times\{0\}$ is the tangent space to $\Gamma_{R}$ at
$0.$ Then there exists $\tilde{c}_{s}>0$ such that $\vartheta_{1,r_{0}}
(\eta)\geq\tilde{c}_{s}$ for all $\eta\in B_{s}^{N-1}, $ where $\vartheta
_{1,r_{0}}$ is the positive first Dirichlet eigenfunction of $-\Delta$ in the
ball of radius $r_{0}$ (as in the beginning of subsection
\ref{sec:analys-line-oper}). We write $y\in\mathbb{L}_{s}$ as $(\xi,\eta)$
with $\xi\in\mathbb{R}$ and $\eta\in B_{s}^{N-1}, $ and set
\begin{equation*}
W(y):=e^{-\nu\left\vert \xi\right\vert }\vartheta_{1,r_{0}}(\eta)
\end{equation*}
where $\nu$ is a small positive constant, independent of $R,$ which will be
fixed next. A straightforward computation gives
\begin{align*}
-\Delta W(y)+\lambda W(y)  &  =\left(  \frac{\left(  N-1\right)  \nu
}{\left\vert \xi\right\vert }-\nu^{2}+\lambda_{1,r_{0}}+\lambda\right)  W(y)\\
&  >\left(  \lambda_{1,r_{0}}+\lambda-\nu^{2}\right)  \tilde{c}_{s}
e^{-\nu\left\vert \xi\right\vert }.
\end{align*}
Since $\lambda_{1,r_{0}}+\lambda>0$ we have that $\lambda_{1,r_{0}}
+\lambda-\nu^{2}>0$ if $\nu$ is small enough. On the other hand, assumption
(H3) on $f$ together with Lemma \ref{decayU} yield that
\begin{equation*}
f(U_{x,R}^{+})\leq b_{1}e^{-\mu p_{1}\left\vert \xi\right\vert },
\end{equation*}
for some large enough $b_{1}>0$. Since $V_{x,s,R}^{+}$ satisfies (\ref{V}) the
maximum principle implies that $V_{x,s,R}^{+}\leq b_{2}W$ with $b_{2}
:=b_{1}\tilde{c}_{s}^{-1}\left(  \lambda_{1,r_{0}}+\lambda-\nu^{2}\right)
^{-1}.$ This gives the exponential bound on $V_{x,s,R}^{+}.$ Similarly for
$V_{x,s,R}^{-}.$ Regularity estimates, using the results in \cite{MR1156467}, yield
the estimates for its derivatives.
\end{proof}
Set
\begin{equation*}
F(u):=\int_{0}^{u}f(s)\,\mathrm{d}s\qquad\text{if }u\in\mathbb{R}\text{.}
\end{equation*}
Then, by (H3),
\begin{equation}
\lvert F(u)\rvert\leq C(\lvert u\rvert^{p_{1}+1}+\lvert u\rvert^{p_{2}
+1})\qquad\text{for all }u\in\mathbb{R}\text{.} \label{eq:49}
\end{equation}
\begin{lemma}
\label{lem:u-v-est}For $s\in\lbrack1,r_{0})$ and $p\in(0,\infty)$ the
asymptotic estimates
\begin{align}
\int_{\mathbb{R}^{N}}\lvert V_{x,s,R}^{\pm}-U_{x,R}^{\pm}\rvert^{p}  &
=O(R^{-\min\{p,1\}}),\label{eq:13}\\
\int_{\mathbb{R}^{N}}\lvert\nabla V_{x,s,R}^{\pm}-\nabla U_{x,R}^{\pm}
\rvert^{2}  &  =O(R^{-1}),\label{eq:12}\\
\int_{\mathbb{R}^{N}}\lvert F(V_{x,s,R}^{\pm})-F(U_{x,R}^{\pm})\rvert &
=O(R^{-1}),\label{eq:14}\\
\int_{\mathbb{R}^{N}}\lvert f(V_{x,s,R}^{\pm})-f(U_{x,R}^{\pm})\rvert^{p}  &
=O(R^{-\min\{p,1\}}), \label{eq:15}
\end{align}
hold true as $R\rightarrow\infty,$ independently of $x\in\Gamma_{R}.$
\end{lemma}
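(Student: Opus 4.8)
\emph{The plan} is to compare $V:=V_{x,s,R}^{\pm}$ with $U:=U_{x,R}^{\pm}$, regarding both as extended by $0$ to $\mathbb{R}^{N}$, and to analyse $W:=V-U$ separately on the three sets $D\cap\mathbb{L}_{x}$, $\mathbb{L}_{x}\setminus D$ and $D\setminus\mathbb{L}_{x}$, where $D:=\Omega_{R}\cap\mathbb{L}_{s,x}$ and $(\xi,\eta)$ denote the coordinates on $\mathbb{L}_{x}$ centered at $x$, that is, $y=x+A_{x}^{-1}(\xi,\eta)$. On $D\cap\mathbb{L}_{x}$ both $U$ and $V$ solve $-\Delta u+\lambda u=f(U)$, so that $-\Delta W+\lambda W=0$ there; on $\mathbb{L}_{x}\setminus D$ one has $W=-U$, and on $D\setminus\mathbb{L}_{x}$ one has $W=V$, because the other function vanishes on those sets. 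Since $W$ is supported in $D\cup\mathbb{L}_{x}$, the integrals in \eqref{eq:13}--\eqref{eq:15} split into contributions from these three pieces.

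\emph{Geometric step.} Using the $C^{3}$ bound on $\gamma$ together with Lemma~\ref{lem:est-intersection-balls} applied in the normal slices of the tube, I would establish, uniformly in $x\in\Gamma_{R}$ and for $R$ large, that: (i) $\mathbb{L}_{x}\setminus D$ and $D\setminus\mathbb{L}_{x}$ lie in an $O((1+\xi^{2})/R)$-neighborhood of $\partial\mathbb{L}_{x}$, respectively of $\partial\Omega_{R}$, and their slices at $\xi$ have $(N-1)$-measure $O((1+\xi^{2})/R)$; and (ii) every point of $\partial D$ lying in $\overline{\mathbb{L}_{x}}$ is within $O((1+\xi^{2})/R)$ of $\partial\mathbb{L}_{x}$, and every point of $\partial\mathbb{L}_{x}$ lying in $\overline{D}$ is within $O((1+\xi^{2})/R)$ of $\partial\Omega_{R}$. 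Since $U=0$ on $\partial\mathbb{L}_{x}$ and $V=0$ on $\partial\Omega_{R}$, while by Lemma~\ref{lem:decay-V} both $U$ and $V$ are Lipschitz with $\lvert\nabla U\rvert,\lvert\nabla V\rvert\leq c_{3}\mathrm{e}^{-c_{4}\lvert\xi\rvert}$ up to the boundary, (ii) yields $\lvert W\rvert\leq\phi$ on $\partial(D\cap\mathbb{L}_{x})$ with $\phi(\xi):=C\mathrm{e}^{-c_{4}\lvert\xi\rvert}(1+\xi^{2})/R$, and on the thin sets of (i) one has $\lvert W\rvert\leq c_{3}\mathrm{e}^{-c_{4}\lvert\xi\rvert}$.

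\emph{Comparison and the $L^{p}$ bounds.} Fix $c'\in(0,\min\{c_{4},\sqrt{\lambda_{1,r_{0}}+\lambda}\})$ and put $\Psi(\xi,\eta):=(C'/R)\,\mathrm{e}^{-c'\sqrt{1+\xi^{2}}}\,\vartheta_{1,r_{0}}(\eta)$. A direct computation gives $(-\Delta+\lambda)\Psi\geq0$ on $\mathbb{L}_{x}$, and, since $\vartheta_{1,r_{0}}$ is bounded below by a positive constant on $\overline{B_{1}^{N-1}}$ and $(1+\xi^{2})\mathrm{e}^{-(c_{4}-c')\lvert\xi\rvert}$ is bounded, $\Psi\geq\phi$ on $\partial(D\cap\mathbb{L}_{x})$ for $C'$ large enough. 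Corollary~\ref{rem:maximum-principle}, applied to $W-\Psi$ and to $-W-\Psi$ on the subdomain $D\cap\mathbb{L}_{x}$ of $\Omega_{R}$, gives $\lvert W\rvert\leq\Psi$ on $D\cap\mathbb{L}_{x}$, whence $\int_{D\cap\mathbb{L}_{x}}\lvert W\rvert^{p}\leq\int_{\mathbb{L}_{x}}\Psi^{p}=O(R^{-p})$. On the two thin sets $\lvert W\rvert^{p}$ equals $\lvert U\rvert^{p}$, respectively $\lvert V\rvert^{p}$, which is $\leq c_{3}^{p}\mathrm{e}^{-pc_{4}\lvert\xi\rvert}$, and integrating against the volume element from (i) contributes $O(R^{-1})$; adding up gives \eqref{eq:13}. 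For \eqref{eq:14} and \eqref{eq:15} I would write $F(V)-F(U)$ and $f(V)-f(U)$ by the mean value theorem as integrals of $f$, respectively $f'$, over the segment joining $U$ and $V$, bound the integrand by $Cm(\xi)^{p_{1}}$, respectively $Cm(\xi)^{p_{1}-1}$, where $m(\xi):=c_{3}\mathrm{e}^{-c_{4}\lvert\xi\rvert}\geq\lvert U\rvert,\lvert V\rvert$, using \eqref{eq:29}, and then multiply the bounds $\lvert W\rvert\leq\Psi$ (and the thin-set bounds) by this extra exponential weight; the resulting $\xi$-integrals converge and yield $O(R^{-1})$ and $O(R^{-\min\{p,1\}})$, respectively.

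\emph{Energy estimate.} Green's identity on the bounded domain $D\cap\mathbb{L}_{x}$ gives $\int_{D\cap\mathbb{L}_{x}}(\lvert\nabla W\rvert^{2}+\lambda W^{2})=\int_{\partial(D\cap\mathbb{L}_{x})}W\,\partial_{n}W$; bounding $\lvert W\rvert\leq\phi$ and $\lvert\partial_{n}W\rvert\leq\lvert\nabla W\rvert\leq2c_{3}\mathrm{e}^{-c_{4}\lvert\xi\rvert}$ on the boundary, whose surface measure is $O(1)$ per unit of $\xi$, the right-hand side is $O(R^{-1})$; moving the sign-indefinite lower order term to the right and using \eqref{eq:13} with $p=2$ gives $\int_{D\cap\mathbb{L}_{x}}\lvert\nabla W\rvert^{2}=O(R^{-1})$. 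On $\mathbb{L}_{x}\setminus D$ and $D\setminus\mathbb{L}_{x}$ one has $\nabla W=-\nabla U$, respectively $\nabla W=\nabla V$, bounded by $c_{3}\mathrm{e}^{-c_{4}\lvert\xi\rvert}$, so the volume element from (i) again yields $O(R^{-1})$; summing proves \eqref{eq:12}. All estimates are uniform in $x\in\Gamma_{R}$ because $c_{3},c_{4}$ and the $O((1+\xi^{2})/R)$ bounds depend only on $\lambda$ and on $\sup\lvert\gamma''\rvert,\sup\lvert\gamma'''\rvert$. I expect the main obstacle to be the geometric step: quantifying, uniformly in $x$, how far the bent tube $\Omega_{R}$ and the boundary values of $U$ and $V$ differ from those of the straight model cylinder $\mathbb{L}_{x}$; once this is in place, the rest follows routinely from Lemma~\ref{lem:decay-V} and the maximum principle.
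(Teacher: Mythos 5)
Your overall strategy coincides with the paper's: split $\mathbb{R}^N$ into the overlap region and the two thin ``crescents'', get a boundary bound of size $\mathrm{e}^{-c_4\lvert\xi\rvert}(1+\xi^2)/R$ for $W=V-U$ from the closeness of $\partial\Omega_R$ and $\partial\mathbb{L}_x$ plus the gradient bounds of Lemma~\ref{lem:decay-V}, propagate it into the overlap by the maximum principle with a $C/R$-sized exponentially decaying supersolution (the paper uses $\frac{C}{R}\mathrm{e}^{-\nu\lvert\xi\rvert}\vartheta_{1,r_0}(\eta)$, essentially your $\Psi$), control the crescents by slice measures of order $(1+\xi^2)/R$ via Lemma~\ref{lem:est-intersection-balls}, use a Gauss--Green identity for the gradient estimate, and deduce \eqref{eq:14}--\eqref{eq:15} from pointwise/$L^p$ information (the paper gets them even more simply from \eqref{eq:13} and the uniform bounds on $U,V$).

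The genuine gap is in your geometric step, and it is exactly the issue the paper's truncation avoids. The paper never works on the full infinite cylinder: it restricts to $Q_R=(-R^{1/4},R^{1/4})\times B_s^{N-1}$ and kills everything outside $Q_R$ by pure exponential decay. Your claims (i)--(ii), as you justify them (graph representation of $\Gamma_R$ over the tangent line at $x$, with $\lvert h_R(\xi)\rvert\le C\xi^2/R$), are only valid for $\lvert\xi\rvert$ up to a fixed fraction of $R$, and, more importantly, they tacitly assume that the only part of $\Omega_R$ meeting the cylinder $\mathbb{L}_{s,x}$ is the piece of the tube around $x$. That is false in general: a simple curve can meet its own tangent line at parameters far from $x$, so $D=\Omega_R\cap\mathbb{L}_{s,x}$ (and hence $D\cap\mathbb{L}_x$, $D\setminus\mathbb{L}_x$ and $\partial(D\cap\mathbb{L}_x)$) may contain components at $\lvert\xi\rvert$ of order $R$ where the graph description, the slice-measure bound, and the boundary estimate $\lvert W\rvert\le\phi$ that feeds your comparison argument are not justified by what you wrote. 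The repair is easy and should be stated: either truncate in $\xi$ as the paper does, or note that by a chord--arc bound such re-entrant pieces lie at distance $\ge cR$ from $x$, so on them $\lvert U\rvert,\lvert V\rvert,\lvert\nabla U\rvert,\lvert\nabla V\rvert\le c_3\mathrm{e}^{-c_4cR}$ and all their volume and boundary contributions are $O(\mathrm{e}^{-cR})$, while for $\sqrt{R}\lesssim\lvert\xi\rvert$ your bound $O((1+\xi^2)/R)$ holds trivially since all the relevant distances and slice measures are $O(1)$. A smaller point: for \eqref{eq:12} the Green identity on $D\cap\mathbb{L}_x$ needs a word of justification (Lipschitz boundary and $C^2$ extensions of $U$ and $V$, as in the paper's use of the Gauss--Green theorem), since the domain is only piecewise smooth.
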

\begin{proof}
Let $x$ be a point on $\Gamma.$ After translation and rotation we may assume
that $x=0$ and that $\mathbb{R}\times\{0\}$ is the tangent space to $\Gamma$
at $0.$ Since $\Gamma$ is compact there exist $\delta,\rho>0$, independent of
$x$, and a $C^{3}$-function $h:(-\rho,\rho)\rightarrow B_{\delta}^{N-1}$ such
that
\begin{equation*}
\Gamma\cap\left(  (-\rho,\rho)\times B_{\delta}^{N-1}\right)  =\{(\xi
,h(\xi))\mid\xi\in(-\rho,\rho)\},
\end{equation*}
and the derivatives of $h$ up to the order $3$ are bounded independently of
$\xi\in(-\rho,\rho)$ and $x\in\Gamma.$ Setting $h_{R}(\xi):=Rh(\xi/R)$ we have
that
\begin{equation*}
\widetilde{\Gamma}_{R}:=\Gamma_{R}\cap\left(  (-\rho R,\rho R)\times B_{\delta
R}^{N-1}\right)  =\{(\xi,h_{R}(\xi))\mid\xi\in(-\rho R,\rho R)\}.
\end{equation*}
An easy argument using Taylor's theorem and geometric considerations shows
that there is a constant $C,$ independent of $x$, such that
\begin{equation}
\left\vert h_{R}(\xi)\right\vert \leq\frac{C\xi^{2}}{R},\text{\quad}\left\vert
h_{R}^{\prime}(\xi)\right\vert \leq\frac{C\left\vert \xi\right\vert }
{R}\text{\quad and\quad}\left\vert y-h_{R}(\xi)\right\vert \leq1+\frac
{C(1+\xi^{2})}{R^{2}} \label{eq:55}
\end{equation}
for all $\xi\in(-\rho R+1,\rho R-1)$ and $y\in\mathbb{R}^{N-1}$ with
$(\xi,y)\in\Omega_{R}$. It follows that
\begin{equation}
\{\xi\}\times B_{1}^{N-1}(h_{R}(\xi))\subset\left[  \{\xi\}\times
\mathbb{R}^{N-1}\right]  \cap{\Omega}_{R}\subset\{\xi\}\times
B_{1+C(1+\left\vert \xi\right\vert ^{2})/R^{2}}^{N-1}(h_{R}(\xi))
\label{eq:56}
\end{equation}
for all $\xi\in(-\rho R+1,\rho R-1)$ and $R$ large enough. Consider the set
\begin{equation*}
Q_{R}:=(-R^{1/4},R^{1/4})\times B_{s}^{N-1}\subset\mathbb{L}_{s}.
\end{equation*}
We express $\mathbb{R}^{N}$ as the union of the sets
\begin{equation}
\mathbb{R}^{N}\smallsetminus Q_{R},\hspace{0.25in}Q_{R}\cap\left(  \Omega
_{R}\smallsetminus\mathbb{L}\right)  ,\hspace{0.25in}Q_{R}\cap\left(
\mathbb{L}\smallsetminus\Omega_{R}\right)  ,\hspace{0.25in}Q_{R}\cap
\mathbb{L}\cap\Omega_{R}. \label{eq:63}
\end{equation}
We will show that the estimates \eqref{eq:13}, \eqref{eq:12}, \eqref{eq:14},
\eqref{eq:15}, hold true for the integrals over each one of these sets. Note
that the integrals over $Q_{R}\smallsetminus\left(  \mathbb{L}\cup\Omega
_{R}\right)  $ are zero.
\begin{claim}
\label{c1}Estimate \eqref{eq:13} holds true for the integral over
$\mathbb{R}^{N}\smallsetminus Q_{R}.$
\end{claim}
By Lemma~\ref{lem:decay-V} there are positive constants $\widetilde{C}
_{1},\widetilde{C}_{2}$ such that
\begin{equation}
\lvert V_{x,s,R}^{\pm}(\xi,\eta)-U_{x,R}^{\pm}(\xi,\eta)\rvert^{p}
\leq\widetilde{C}_{1}\mathrm{e}^{-\widetilde{C}_{2}(\left\vert \xi\right\vert
+\left\vert \eta\right\vert )} \label{eq:59}
\end{equation}
for all $(\xi,\eta)\in\mathbb{R}\times\mathbb{R}^{N-1}$. This immediately
yields Claim~\ref{c1}.
\begin{claim}
\label{c2}Estimate \eqref{eq:13} holds true for the integral over $Q_{R}
\cap\left(  \Omega_{R}\smallsetminus\mathbb{L}\right)  .$
\end{claim}
By \eqref{eq:59}, \eqref{eq:56}, Lemma~\ref{lem:est-intersection-balls} and
\eqref{eq:55} it holds that
\begin{align*}
&  \int_{Q_{R}\cap\left(  \Omega_{R}\smallsetminus\mathbb{L}\right)  }\lvert
V_{x,s,R}^{\pm}-U_{x,R}^{\pm}\rvert^{p}\text{ }\\
&  \leq\widetilde{C}_{1}\int_{-R^{1/4}}^{R^{1/4}}\mathrm{e}^{-\widetilde
{C}_{2}\left\vert \xi\right\vert }\text{vol}_{N-1}\left(  B_{1+C(1+\left\vert
\xi\right\vert ^{2})/R^{2}}^{N-1}(h_{R}(\xi))\smallsetminus B_{1}
^{N-1}(0)\right)  \mathrm{d}\xi\\
&  \leq C\int_{-R^{1/4}}^{R^{1/4}}\mathrm{e}^{-\widetilde{C}_{2}\left\vert
\xi\right\vert }(\left\vert h_{R}(\xi)\right\vert +C(1+\xi^{2})/R^{2})\text{
}\mathrm{d}\xi\\
&  \leq\frac{C}{R}\int_{-\infty}^{\infty}\mathrm{e}^{-\widetilde{C}
_{2}\left\vert \xi\right\vert }(1+\xi^{2})\text{ }\mathrm{d}\xi=O(R^{-1})
\end{align*}
as $R\rightarrow\infty.$
\begin{claim}
\label{c3}Estimate \eqref{eq:13} holds true for the integral over $Q_{R}
\cap\left(  \mathbb{L}\smallsetminus\Omega_{R}\right)  .$
\end{claim}
The proof is similar to that of Claim 2, using this time the first inclusion
in \eqref{eq:56}.
\begin{claim}
\label{c4}Estimate \eqref{eq:13} holds true for the integral over $Q_{R}
\cap\mathbb{L}\cap\Omega_{R}.$
\end{claim}
Set $D_{R}:=Q_{R}\cap\mathbb{L}\cap\Omega_{R}.$ First we prove that, for some
suitable constant $C$ independent of $x$ and $R$,
\begin{equation}
\left\vert V_{x,s,R}^{\pm}(\xi,\eta)-U_{x,R}^{\pm}(\xi,\eta)\right\vert \leq
C\mathrm{e}^{-C_{4}\left\vert \xi\right\vert }\frac{1+\xi^{2}}{R}
\label{eq:64}
\end{equation}
for all $(\xi,\eta)\in\partial D_{R}$. Let $(\xi,\eta)\in\partial D_{R}.$ If
$(\xi,\eta)\in\partial\mathbb{L}$, Lemma~\ref{lem:est-intersection-balls},
together with \eqref{eq:56}, and \eqref{eq:55}, yields
\begin{equation}
\text{dist}((\xi,\eta),\partial\Omega_{R})\leq\left\vert h_{R}(\xi)\right\vert
+C\frac{1+\xi^{2}}{R^{2}}\leq C\frac{1+\xi^{2}}{R}. \label{eq:85}
\end{equation}
Similarly, if $(\xi,\eta)\in\partial\Omega_{R}$ then
\begin{equation*}
\text{dist}((\xi,\eta),\partial\mathbb{L})\leq C\frac{1+\xi^{2}}{R}.
\end{equation*}
Since $U_{x,R}^{\pm}$ vanishes on $\partial\mathbb{L}$ and $V_{x,s,R}^{\pm}$
vanishes on $\partial\Omega_{R}$, the estimates in Lemma~\ref{lem:decay-V}
yield inequality \eqref{eq:64}. Next we set $W(y):=e^{-\nu\left\vert
\xi\right\vert }\vartheta_{1,r_{0}}(\eta)$ with $\nu\in(0,C_{4})$ as in the
proof of Lemma~\ref{lem:decay-V}. By \eqref{eq:64} there exists $C>0$ such
that
\begin{equation*}
\left\vert V_{x,s,R}^{\pm}(\xi,\eta)-U_{x,R}^{\pm}(\xi,\eta)\right\vert
\leq\frac{C}{R}W(\xi,\eta)
\end{equation*}
for all $(\xi,\eta)\in\partial D_{R}$. Since $V_{x,R}^{\pm}-U_{x,R}^{\pm}$ is
harmonic for $-\Delta+\lambda$ in $D_{R}$ the maximum principle implies that
\begin{equation*}
\left\vert V_{x,s,R}^{\pm}(\xi,\eta)-U_{x,R}^{\pm}(\xi,\eta)\right\vert
\leq\frac{C}{R}W(\xi,\eta)=\frac{C}{R}e^{-\nu\left\vert \xi\right\vert
}\vartheta_{1,r_{0}}(\eta)
\end{equation*}
for all $(\xi,\eta)\in D_{R}$, with $C$ independent of $x$ and $R$.
Therefore,
\begin{equation}
\int_{D_{R}}\left\vert V_{x,s,R}^{\pm}-U_{x,R}^{\pm}\right\vert ^{p}=O(R^{-p})
\label{eq:67}
\end{equation}
as $R\rightarrow\infty$. This proves Claim \ref{c4}.
\begin{claim}
\label{c5}Estimate \eqref{eq:12} holds true for the integrals over
$\mathbb{R}^{N}\smallsetminus Q_{R},$ $Q_{R}\cap\left(  \Omega_{R}
\smallsetminus\mathbb{L}\right)  $ and $Q_{R}\cap\left(  \mathbb{L}
\smallsetminus\Omega_{R}\right)  .$
\end{claim}
The same arguments as in the proofs of Claims \ref{c1}, \ref{c2} and \ref{c3}
yield this claim.
\begin{claim}
\label{c6}Estimate \eqref{eq:12} holds true for the integral over $Q_{R}
\cap\mathbb{L}\cap\Omega_{R}.$
\end{claim}
Set $D_{R}:=Q_{R}\cap\mathbb{L}\cap\Omega_{R}.$ The functions $U_{x,R}^{\pm}$
and $V_{x,R}^{\pm}$ can be extended to $C^{2}$-functions in neighborhoods of
$\mathbb{L}$ and $\Omega_{R}$, respectively. Denote by $Y_{R}$ the difference
of these extensions on a neighborhood of $\overline{D_{R}}$. Note that $D_{R}$
has Lipschitz boundary if $R$ is large enough. Hence we can apply the
Gauss-Green theorem (see e.g.\ \cite[Theorem~5.8.2]{MR1014685} and the remark
following it) and obtain that
\begin{equation}
\int_{D_{R}}\left\vert \nabla Y_{R}\right\vert ^{2}=\int_{\partial D_{R}}
Y_{R}\,n_{R}(x)\cdot\nabla Y_{R}\,\mathrm{d}H_{N-1}(x)-\lambda\int_{D_{R}
}Y_{R}^{2} \label{eq:105}
\end{equation}
Here $n_{R}(x)$ denotes the measure theoretic exterior normal to $\partial
D_{R}$ at $x$, and $H_{N-1}$ denotes $(N-1)$-dimensional Hausdorff measure. By
Lemma~\ref{lem:decay-V}, $\nabla Y_{R}$ is bounded uniformly and independently
of $R$. Hence \eqref{eq:105} and \eqref{eq:64} imply
\begin{align*}
\int_{D_{R}}\left\vert \nabla Y_{R}\right\vert ^{2}  &  \leq\frac{C}{R}\left(
\int_{\partial D_{R}}\mathrm{e}^{-C_{4}\left\vert \xi\right\vert
}(1+\left\vert \xi\right\vert ^{2})\,\mathrm{d}H_{N-1}(x)+\int_{D_{R}
}\mathrm{e}^{-C_{4}\left\vert \xi\right\vert }(1+\left\vert \xi\right\vert
^{2})\,\mathrm{d}\xi\,\mathrm{d}\eta\right) \\
&  =O(R^{-1}).
\end{align*}
This proves Claim \ref{c6}.
\begin{claim}
\label{c7}Estimates \eqref{eq:14} and \eqref{eq:15} hold true.
\end{claim}
These estimates follow easily from \eqref{eq:13} since $U_{x,R}^{\pm}$ and
$V_{x,R}^{\pm}$ are bounded uniformly as $R\rightarrow\infty$ and $F$ and $f$
are continuously differentiable.
\end{proof}
The energy functional for the Dirichlet problem $-\Delta u+\lambda u=f(u)$ in
a domain $\Omega\subseteq\mathbb{R}^{N}$ is given by
\begin{equation*}
J_{\Omega}(u):=\frac{1}{2}\int_{\Omega}(\lvert\nabla u\rvert^{2}+\lambda
u^{2})-\int_{\Omega}F(u),\qquad u\in H_{0}^{1}(\Omega).
\end{equation*}
By (H2), (H3) and \eqref{eq:49} $J_{\Omega}$ is well defined and twice
continuously differentiable on $H_{0}^{1}(\Omega)$, with $\mathrm{D}
^{2}J_{\Omega}$ globally H\"{o}lder continuous on bounded subsets of
$H_{0}^{1}(\Omega)$.
\begin{lemma}
\label{lem:nearness-u-v} The estimates
\begin{align}
\sup_{x\in\Gamma_{R}}\lVert V_{x,R}^{\pm}-U_{x,R}^{\pm}\rVert_{H^{1}
(\mathbb{R}^{N})}  &  =O(R^{-1/2}),\label{eq:16}\\
\sup_{x\in\Gamma_{R}}\lvert J_{\Omega_{R}}(V_{x,R}^{\pm})-J_{\mathbb{L}
}(U^{\pm})\rvert &  =O(R^{-1}),\label{eq:17}\\
\sup_{x\in\Gamma_{R}}\lVert\nabla J_{\Omega_{R}}(V_{x,R}^{\pm})\rVert
_{H_{0}^{1}(\Omega_{R})}  &  =O(R^{-1/2}), \label{eq:18}
\end{align}
hold true as $R\rightarrow\infty$.
\end{lemma}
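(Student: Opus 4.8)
My plan is to prove the three estimates separately, the first two being short and the third the heart of the matter. The inputs are Lemma~\ref{lem:u-v-est} and Lemma~\ref{lem:decay-V} for the analytic bounds on $V_{x,R}^\pm-U_{x,R}^\pm$ and the pointwise decay, the geometric bounds \eqref{eq:55}, \eqref{eq:56} and \eqref{eq:85} for the mismatch between $\Omega_R$ and $\mathbb{L}_x$, and the uniform bound $\lambda_1(\Omega_R)+\lambda\ge c_0>0$ for large $R$ (Corollary~\ref{cor:spectrum-positive-omega-r}), which makes $\|\cdot\|_{H_0^1(\Omega_R)}$ and $\|\cdot\|_{H^1(\Omega_R)}$ uniformly equivalent. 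Write $U:=U_{x,R}^\pm$, $V:=V_{x,R}^\pm$, $\Omega':=\Omega_R\cap\mathbb{L}_x$ and $\Sigma:=\partial\mathbb{L}_x\cap\Omega_R$, and recall that $U,V$ are uniformly bounded and that $J$ is invariant under the isometry $y\mapsto A_x(y-x)$, so $J_{\mathbb{L}}(U^\pm)=J_{\mathbb{L}_x}(U)$. Estimate \eqref{eq:16} is then immediate: since $U,V$ are supported in $\overline{\mathbb{L}_x}$ and $\overline{\Omega'}$, one has $\|V-U\|_{H^1(\mathbb{R}^N)}^2=\int_{\mathbb{R}^N}|V-U|^2+\int_{\mathbb{R}^N}|\nabla V-\nabla U|^2$, which is $O(R^{-1})$ by \eqref{eq:13} with $p=2$ and by \eqref{eq:12}.

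For \eqref{eq:17} I would test the equation for $U$ on $\mathbb{L}_x$ and that for $V$ on $\Omega'$ against the respective functions, turning the energies into $J_{\Omega_R}(V)=\tfrac12\int_{\Omega'}f(U)V-\int_{\Omega'}F(V)$ and $J_{\mathbb{L}_x}(U)=\tfrac12\int_{\mathbb{L}_x}f(U)U-\int_{\mathbb{L}_x}F(U)$. Subtracting and using $\Omega'\subseteq\mathbb{L}_x$ produces, besides $\tfrac12\int_{\Omega'}f(U)(V-U)$ and $-\int_{\Omega'}(F(V)-F(U))$, only integrals of $f(U)U$ and $F(U)$ over $\mathbb{L}_x\setminus\Omega_R$. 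The first two are $O(R^{-1})$ by \eqref{eq:13} with $p=1$ (using $\|f(U)\|_{L^\infty}\le C$) and by \eqref{eq:14}; for the last two I bound $|f(U)U|,|F(U)|\le C|U|^{p_1+1}\le C\mathrm{e}^{-c|\xi|}$ (Lemma~\ref{decayU}) and, by \eqref{eq:56} and Lemma~\ref{lem:est-intersection-balls}, bound the $\xi$-slice of $\mathbb{L}_x\setminus\Omega_R$ by $C(1+\xi^2)/R$ on the range where the local parametrization of $\Gamma_R$ is valid (the complement being exponentially small), so integration in $\xi$ gives $O(R^{-1})$.

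The crux is \eqref{eq:18}. Using $\|\nabla J_{\Omega_R}(V)\|_{H_0^1(\Omega_R)}=\sup\{J_{\Omega_R}'(V)[\phi]:\|\phi\|_{H_0^1(\Omega_R)}\le 1\}$, I would first reduce from $V$ to $U$: in
\[
J_{\Omega_R}'(V)[\phi]=J_{\Omega_R}'(U)[\phi]+\int_{\Omega_R}\big(\nabla(V-U)\!\cdot\!\nabla\phi+\lambda(V-U)\phi\big)-\int_{\Omega_R}(f(V)-f(U))\phi,
\]
the two difference integrals are $O(R^{-1/2})\|\phi\|_{H_0^1(\Omega_R)}$ by \eqref{eq:16} and by \eqref{eq:15} with $p=2$. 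For the leftover term I use that $U$ solves $-\Delta U+\lambda U=f(U)$ on $\mathbb{L}_x$ with $U\in H_0^1(\mathbb{L}_x)$, that $U$ is supported in $\overline{\mathbb{L}_x}$, and that $\phi$ vanishes on $\partial\Omega_R$; Green's formula on $\Omega'$ then collapses everything to a single surface integral,
\[
J_{\Omega_R}'(U)[\phi]=\int_{\Sigma}(\partial_\nu U)\,\phi\,\mathrm{d}H_{N-1},
\]
with $\nu$ the outer normal of $\Omega'$ along $\Sigma\subset\partial\mathbb{L}_x$.

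This surface term is the real obstacle. Since $\partial_\nu U$ is of order one near $x$, estimating it by $\|\partial_\nu U\|_{L^2(\Sigma)}\|\phi\|_{L^2(\Sigma)}$ only yields $O(1)$; the gain of $R^{-1/2}$ must be squeezed out of \eqref{eq:85}, which says that $\Sigma$ lies within distance $C(1+\xi^2)/R$ of $\partial\Omega_R$, where $\phi=0$. Concretely, each $y=(\xi,\eta)\in\Sigma$ can be joined to some $y^{\ast}\in\partial\Omega_R$ by a segment inside $\overline{\Omega_R}$ of length $\ell(y)\le C(1+\xi^2)/R$, so $|\phi(y)|\le\ell(y)^{1/2}\big(\int_{[y,y^{\ast}]}|\nabla\phi|^2\big)^{1/2}$. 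Combining this with $|\partial_\nu U(y)|\le c_3\mathrm{e}^{-c_4|\xi|}$ (Lemma~\ref{lem:decay-V}) and Cauchy--Schwarz over $\Sigma$, and using that the $\xi$-slices of $\Sigma$ have $H_{N-2}$-measure bounded by a constant while the segments $[y,y^{\ast}]$ cover a collar of $\Sigma$ in $\Omega_R$ with bounded multiplicity, I obtain
\[
\big|J_{\Omega_R}'(U)[\phi]\big|\le\Big(\tfrac{C}{R}\int_{\mathbb{R}}\mathrm{e}^{-2c_4|\xi|}(1+\xi^2)\,\mathrm{d}\xi\Big)^{1/2}\cdot C\,\|\nabla\phi\|_{L^2(\Omega_R)}=O(R^{-1/2})\,\|\phi\|_{H_0^1(\Omega_R)},
\]
the part of $\Sigma$ outside the range of \eqref{eq:85} contributing only an exponentially small amount via the decay of $\nabla U$ and a uniform trace bound $\|\phi\|_{L^2(\Sigma)}\le C\|\phi\|_{H_0^1(\Omega_R)}$. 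All constants are independent of $x\in\Gamma_R$, so \eqref{eq:18} follows and with it the lemma. Everything hinges on trading the non-smallness of $\partial_\nu U$ against the smallness of $\phi$ forced by the tight fit of $\mathbb{L}_x$ against $\partial\Omega_R$.
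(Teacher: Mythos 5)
Your proposal is correct, and for the crucial estimate \eqref{eq:18} it takes a genuinely different route from the paper. The paper never produces a boundary flux term at all: it splits the test function $v=v_1+v_2$ with a cut-off $\chi(\eta)$ supported in the slightly larger cylinder $\mathbb{L}_{s,x}$, $s\in(1,r_0)$, so that $v_1\in H_0^1(\Omega_R\cap\mathbb{L}_{s,x})$ and $V^{\pm}_{x,R}$ does not see $v_2$; then the defining equation of the enlarged projection $V^{\pm}_{x,s,R}$ gives exactly $DJ_{\Omega_R}(V^{\pm}_{x,s,R})v_1=\int (f(U^{\pm}_{x,R})-f(V^{\pm}_{x,s,R}))v_1$, which is $O(R^{-1/2})$ by \eqref{eq:15}, and the remaining difference $DJ_{\Omega_R}(V^{\pm}_{x,R})v_1-DJ_{\Omega_R}(V^{\pm}_{x,s,R})v_1$ is $O(R^{-1/2})$ by comparing both to $U^{\pm}_{x,R}$ via Lemma~\ref{lem:u-v-est}. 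This is precisely why the family $V^{\pm}_{x,s,R}$ with $s>1$ was introduced, and it lets the whole proof run on the already-established $L^p$ bounds with no new geometric input. You instead reduce from $V$ to $U$ (legitimate, via \eqref{eq:16} and \eqref{eq:15}), integrate by parts on $\Omega_R\cap\mathbb{L}_x$, and confront the flux term $\int_{\Sigma}(\partial_\nu U)\phi$ head on, extracting the factor $R^{-1/2}$ from the fact that $\phi$ vanishes on $\partial\Omega_R$, which by \eqref{eq:85} lies within distance $C(1+\xi^2)/R$ of $\Sigma$. This boundary-layer trace argument is sound and makes the origin of the rate $R^{-1/2}$ geometrically transparent, but it is the one place where you should be more careful than you are: the pointwise bound $\lvert\phi(y)\rvert\le\ell(y)^{1/2}\bigl(\int_{[y,y^\ast]}\lvert\nabla\phi\rvert^2\bigr)^{1/2}$ must be run as an a.e./Fubini argument over a measurable family of segments, and with nearest-point segments the claimed bounded multiplicity of the covering is not obvious; it is cleaner to extend $\phi$ by zero to $\mathbb{R}^N$ and use segments in the radial $\eta$-direction of length $C'(1+\xi^2)/R$, which by \eqref{eq:56} exit $\Omega_R$, are pairwise disjoint, and have uniformly controlled Jacobian. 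Also note that for a closed curve the straight cylinder $\mathbb{L}_x$ can re-enter the tube far from $x$, so $\Sigma$ may have pieces at distance of order $R$ from $x$; as you indicate, these are harmless because there $\lvert\nabla U^{\pm}_{x,R}\rvert\le c_3\mathrm{e}^{-c_4\lvert y-x\rvert}$ is exponentially small and a trace bound uniform in $x$ and $R$ applies, but this deserves an explicit word. Your treatment of \eqref{eq:16} and \eqref{eq:17} coincides in substance with the paper's (the equations for $U^{\pm}$ and $V^{\pm}_{x,R}$ convert the quadratic parts into terms controlled by \eqref{eq:13} with $p=1$ and \eqref{eq:14}, giving the full $O(R^{-1})$ rather than the naive $O(R^{-1/2})$).
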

\begin{proof}
Estimates \eqref{eq:16} and \eqref{eq:17} follow immediately from Lemma
\ref{lem:u-v-est}. To prove the third one we choose $s\in(1,r_{0})$ and a
cut-off function $\chi\in C^{\infty}(\mathbb{R}^{N-1})$ with $\chi(\eta)=1$ if
$\left\vert \eta\right\vert \leq1$ and $\chi(\eta)=0$ if $\left\vert
\eta\right\vert \geq s.$ Fix $R$ and $x\in\Gamma_{R}.$ Assuming that $x=0$ and
that $\mathbb{R}\times\{0\}$ is the tangent space to $\Gamma_{R}$ at $0,$ we
write $v\in H_{0}^{1}(\Omega_{R})$ as $v=v_{1}+v_{2}$ where $v_{1}(\xi
,\eta):=\chi(\eta)v(\xi,\eta).$ Then $v_{1}\in H_{0}^{1}(\Omega_{R}
\cap\mathbb{L}_{s,x}),$ supp$(v_{2})\subset\Omega_{R}\smallsetminus
\mathbb{L}_{x}$ and there exists a constant $c_{s}, $ independent of $R$ and
$x,$ such that $\left\Vert v_{1}\right\Vert _{H^{1}(\mathbb{R}^{N})}\leq
c_{s}\left\Vert v\right\Vert _{H^{1}(\mathbb{R}^{N})}$ for all $v\in H_{0}
^{1}(\Omega_{R}).$ From the definition of $V_{x,s,R}^{\pm}$ and Lemma
\ref{lem:u-v-est} we obtain
\begin{align*}
\left\vert DJ_{R}(V_{x,R}^{\pm})v\right\vert  &  =\left\vert DJ_{R}
(V_{x,R}^{\pm})v_{1}\right\vert \\
&  \leq\left\vert DJ_{R}(V_{x,s,R}^{\pm})v_{1}\right\vert +\left\vert
DJ_{R}(V_{x,R}^{\pm})v_{1}-DJ_{R}(V_{x,s,R}^{\pm})v_{1}\right\vert \\
&  \leq\left\vert \int_{\mathbb{R}^{N}}\left(  f(U_{x,R}^{\pm})-f(V_{x,s,R}
^{\pm})\right)  v_{1}\right\vert +O(R^{-1/2})\left\Vert v_{1}\right\Vert
_{H^{1}(\mathbb{R}^{N})}\\
&  \leq O(R^{-1/2})\left\Vert v\right\Vert _{H^{1}(\mathbb{R}^{N})},
\end{align*}
as claimed.
\end{proof}
For $m=1,2$ we consider functions $g_{m}\colon\mathbb{R}^{+}\rightarrow
\mathbb{R}^{+}$ (to be fixed later) satisfying
\begin{align}
g_{2}  &  <g_{1}, &  & \label{eq:24}\\
g_{m}(R)  &  \rightarrow\infty & \text{as }R  &  \rightarrow\infty,\text{ for
}m=1,2,\label{eq:10}\\
g_{m}(R)  &  =o(R) & \text{as }R  &  \rightarrow\infty,\text{ for }m=1,2.
\label{eq:25}
\end{align}
Let $D_{m,R}$ be the set of points $(x_{1},x_{2},\dots,x_{n})$ in $(\Gamma
_{R})^{n}$ such that there exist $i,j\in\{1,2,\dots,n\}$ with $i\neq j$ and
$\lvert x_{i}-x_{j}\rvert\leq g_{m}(R)$, and let
\begin{equation}
\mathcal{U}_{m,R}:=\{(x_{1},x_{2},\dots,x_{n})\in(\Gamma_{R})^{n}
\smallsetminus D_{m,R}\mid(x_{1},x_{2},\dots,x_{n})\text{ is an $n $-chain}\},
\label{Uclosed}
\end{equation}
see \eqref{eq:33} for the definition of an $n$-chain. Then $\mathcal{U}_{1,R}
$ and $\mathcal{U}_{2,R}$ are open subsets of $(\Gamma_{R})^{n}$ such that
$\overline{\mathcal{U}_{1,R}}\subset\mathcal{U}_{2,R}$. For $i,j\in
\{1,2,\dots,n\}\ $we set
\begin{equation*}
d_{n}(i,j):=\min\{\left\vert i-j\right\vert ,\left\vert i-j+n\right\vert
,\left\vert i-j-n\right\vert \}.
\end{equation*}
$d_{n}(i,j)$ is the distance from $i$ to the set of integers which are
congruent to $j$ mod $n.$
\begin{lemma}
\label{lem:chains}For $R$ large enough and every $(x_{1},x_{2},\dots,x_{n}
)\in\overline{\mathcal{U}_{1,R}}$ we have that
\begin{equation}
s(R):=\min_{x\in\mathbb{R}^{N}}\left(  \lvert x-x_{i}\rvert+\lvert
x-x_{j}\rvert+\lvert x-x_{\ell}\rvert\right)  \geq2g_{1}(R) \label{sR}
\end{equation}
for any $i,j,\ell\in\{1,2,\dots,n\},$ and
\begin{equation}
\lvert x_{i}-x_{j}\rvert\geq\frac{4}{3}g_{1}(R)\text{\qquad if }d_{n}
(i,j)\geq2. \label{d>2}
\end{equation}
\end{lemma}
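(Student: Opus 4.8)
My plan is to reduce both inequalities to two quantitative geometric properties of the fixed curve $\Gamma$, obtained after scaling. First I would reparametrize $\Gamma_R$ by arc length, setting $\sigma_R(s):=R\sigma(s/R)$ where $\sigma$ is the unit-speed parametrization of $\Gamma$ on the circle $\mathbb{R}/L_\Gamma\mathbb{Z}$, $L_\Gamma$ being the length of $\Gamma$; then $|\sigma_R'|\equiv1$ and, with $\omega$ the modulus of continuity of $\sigma'$, a one-line Taylor estimate gives $|\sigma_R(s_1)-\sigma_R(s_2)|\ge|s_1-s_2|\bigl(1-\omega(|s_1-s_2|/R)\bigr)$ and controls the angle between nearby chords. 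I would then fix once and for all a constant $\delta_0\in(0,L_\Gamma/2]$ with $\omega(\delta_0)$ small enough (e.g.\ $\le1/3$) that the following two facts hold: \emph{(A)} two points of $\Gamma_R$ joined by a sub-arc of length $\ell\le\delta_0 R$ are at mutual distance $\ge\tfrac23\ell$, and any three points lying on such a sub-arc form a triangle whose interior angle at the point that is in the middle along the arc is at least $2\pi/3$; and \emph{(B)} since $\Gamma$ is simple and $[0,1]$ is compact, $m(\delta):=\min\{\,|\sigma(t)-\sigma(t')| : \operatorname{dist}_{\mathbb{R}/L_\Gamma\mathbb{Z}}(t,t')\ge\delta\,\}>0$ for every $\delta\in(0,L_\Gamma/2]$, so two points of $\Gamma_R$ whose shorter connecting arc has length $\ge\delta R$ lie at distance $\ge m(\delta)\,R$. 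Throughout I would use that on $\overline{\mathcal{U}_{1,R}}$ all pairwise distances among $x_1,\dots,x_n$ are $\ge g_1(R)$.

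\emph{Proof of \eqref{d>2}.} Suppose $d_n(i,j)\ge2$; this forces $n\ge4$ (otherwise the claim is vacuous), and then $n-d_n(i,j)\ge2$ as well, so each of the two arcs of $\Gamma_R$ joining $x_i$ to $x_j$ contains at least one other point $x_k$ of the chain. Let $\ell$ be the length of the shorter of the two arcs and pick such an $x_k$ on it; splitting that arc at $x_k$ and using $\text{(chord)}\le\text{(arc)}$ gives $\ell\ge|x_i-x_k|+|x_k-x_j|\ge2g_1(R)$. If $\ell\le\delta_0 R$, fact (A) yields $|x_i-x_j|\ge\tfrac23\ell\ge\tfrac43 g_1(R)$. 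If $\ell>\delta_0 R$ the shorter connecting arc has length in $(\delta_0 R,L_\Gamma R/2]$, so fact (B) gives $|x_i-x_j|\ge m(\delta_0)R$, which exceeds $\tfrac43 g_1(R)$ once $R$ is large, because $g_1(R)=o(R)$.

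\emph{Proof of \eqref{sR}.} I may assume $i,j,\ell$ pairwise distinct. The three points split $\Gamma_R$ into three arcs of total length $L_\Gamma R$; choose a longest one, $A$, so $|A|\ge L_\Gamma R/3$, and let $I$ be the complementary arc, which contains all three points, has length $L_\Gamma R-|A|$, and has two of the three points as endpoints. If $|I|<\delta_0 R$, all three points lie on the short arc $I$; letting $m$ be the one of them interior to $I$, fact (A) gives that the triangle $x_i x_j x_\ell$ has its angle at $m$ at least $2\pi/3$, so by Lemma~\ref{lem:triangle-geometry}(a) $s(R)$ equals the sum of its two shortest sides; the law of cosines shows the side opposite $m$ is strictly the longest, so those two sides are the ones incident to $m$, each $\ge g_1(R)$, whence $s(R)\ge2g_1(R)$. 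If instead $|I|\ge\delta_0 R$, the two endpoints of $I$ have shorter connecting arc of length $\min(|A|,|I|)\ge\min(L_\Gamma/3,\delta_0)\,R$, so by fact (B) the triangle has a side of length $\ge cR$ with $c:=m(\min(L_\Gamma/3,\delta_0))>0$; Lemma~\ref{lem:triangle-geometry}(b) then gives $s(R)\ge\tfrac12 cR\ge 2g_1(R)$ for $R$ large.

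The only genuinely global input is fact (B), where the absence of self-intersections of $\Gamma$ enters through a compactness argument; everything else is the quantitative ``almost-straightness'' of $\Gamma_R$ on scales $o(R)$, packaged into the choice of $\delta_0$, plus bookkeeping of constants. I expect the main obstacle to be exactly calibrating $\delta_0$ so that all the $R$-independent geometric inequalities hold at once, and then checking that $g_1(R)\to\infty$ and $g_1(R)=o(R)$ make every error term negligible for $R$ large.
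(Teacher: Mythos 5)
Your proposal is correct and follows essentially the same route as the paper: both arguments rest on the quantitative almost-straightness of the rescaled curve at scales $o(R)$ (your fact (A) plays the role of the paper's compactness properties (i) and (ii), with the same $3/2$ detour constant) combined with Lemma~\ref{lem:triangle-geometry}, together with a dichotomy between triples/pairs that are close and far apart. The only cosmetic differences are that you prove \eqref{d>2} directly rather than by contradiction, and that you split cases by arc length---which is why you need the global separation fact (B)---whereas the paper splits by whether a chord length exceeds $2\varrho R$, making the ``far'' case immediate.
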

\begin{proof}
Since $\Gamma$ is compact there exists $\varrho>0$ with the following properties:
\begin{enumerate}
\item[(i)] If $x,y\in\Gamma$ and $0<\left\vert x-y\right\vert <2\varrho$ then
there exists a connected component $\mathcal{C}$ of  $\Gamma\smallsetminus
\{x,y\}$ such that $\left\vert x-z\right\vert +\left\vert z-y\right\vert
\leq\frac{3} {2}\left\vert x-y\right\vert $ for every $z\in\mathcal{C}$.
\item[(ii)] If $x,y,z$ are three different points in $\Gamma$, $\left\vert
x-y\right\vert <2\varrho$ and $\left\vert z-y\right\vert <2\varrho$ then one
of the angles of the triangle with vertices $x,y,z$ is larger that $2\pi/3.$
\end{enumerate}
Fix $R$ large enough so that $\frac{g_{1}(R)}{R}<\varrho.$
Let $(x_{1},x_{2},\dots,x_{n})\in\overline{\mathcal{U}_{1,R}}.$ If
\begin{equation*}
  \max\left(  \lvert x_{j}-x_{i}\rvert+\lvert x_{\ell}-x_{j}\rvert+\lvert
    x_{i}-x_{\ell}\rvert\right)  <2\varrho R,
\end{equation*}
the points $\frac{x_{i}}{R}
,\frac{x_{j}}{R},\frac{x_{\ell}}{R}\in\Gamma$ satisfy the hypothesis of (ii)
and, therefore, the triangle with vertices $x_{i},x_{j},x_{\ell}$ has an angle
which is larger that $2\pi/3.$ It follows from Lemma
\ref{lem:triangle-geometry} that $s(R)\geq2g_{1}(R).$ If, on the other hand,
$\lvert x_{j}-x_{i}\rvert\geq2\varrho R$ then $\lvert x_{j}-x_{i}\rvert
\geq2g_{1}(R),$ and Lemma \ref{lem:triangle-geometry} implies that
$s(R)\geq2g_{1}(R).$ This proves (\ref{sR}).
To prove (\ref{d>2}) we argue by contradiction. Assume there are $(x_{1}
,x_{2},\dots,x_{n})\in\overline{\mathcal{U}_{1,R}}$ and\ $i,j\in
\{1,2,\dots,n\}$ such that $d_{n}(i,j)\geq2$ and $\lvert x_{i}-x_{j}
\rvert<\frac{4}{3}g_{1}(R).$ Then $0<\left\vert \frac{x_{i}}{R}-\frac{x_{j}
}{R}\right\vert <2\varrho$. Since $d_{n}(i,j)\geq2$ there is a point $x_{\ell}
$ in the $n$-chain, which lies between $x_{i}$ and $x_{j},$ such that
$\frac{x_{\ell}}{R}$ belongs to the connected component of $\Gamma
\smallsetminus\{\frac{x_{i}}{R},\frac{x_{j}}{R}\}$ to which the conclusion of
(i) applies. Then, $2g_{1}(R)\leq\lvert x_{i}-x_{\ell}\rvert+\lvert x_{\ell
}-x_{j}\rvert\leq\frac{3}{2}\lvert x_{i}-x_{j}\rvert,$ a contradiction.
\end{proof}
In the rest of this subsection we assume that $n=2k.$ For $X=(x_{1}
,x_{2},\dots,x_{n})\in\mathcal{U}_{2,R}$ we define $\varphi_{R}\colon
\mathcal{U}_{2,R}\rightarrow H_{0}^{1}(\Omega_{R})$ by
\begin{equation}
\varphi_{R}(X):=
{\sum_{i=1}^{k}}
(V_{x_{2i-1},R}^{+}+V_{x_{2i},R}^{-}). \label{immersion}
\end{equation}
For fixed $X=(x_{1},x_{2},\dots,x_{n})$ it will be convenient to write
\begin{equation}
\overline{U}_{i}:=\left\{
\begin{array}
[c]{ll}
U_{x_{i},R}^{+} & \text{if }i\text{ is odd,}\\
U_{x_{i},R}^{-} & \text{if }i\text{ is even,}
\end{array}
\right.  \text{\qquad}\overline{V}_{i}:=\left\{
\begin{array}
[c]{ll}
V_{x_{i},R}^{+} & \text{if }i\text{ is odd,}\\
V_{x_{i},R}^{-} & \text{if }i\text{ is even.}
\end{array}
\right.  \label{bar}
\end{equation}
Then
\begin{equation*}
\varphi_{R}(X)=
{\sum_{i=1}^{n}}
\overline{V}_{i}.
\end{equation*}
\begin{proposition}
\label{prop:lyapunov-gradient-estimate} Let $\alpha$ be as in
\emph{Lemma~\ref{lem:splitting-f}} and fix $\alpha^{\prime}\in(1/2,\alpha)$.
Then
\begin{equation*}
\sup_{X\in\mathcal{U}_{2,R}}\lVert\nabla J_{\Omega_{R}}(\varphi_{R}
(X))\rVert_{H_{0}^{1}(\Omega_{R})}=O(\mathrm{e}^{-\alpha^{\prime}\mu g_{2}
(R)})+O(R^{-1/2})
\end{equation*}
as $R\rightarrow\infty$.
\end{proposition}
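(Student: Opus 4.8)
\emph{Plan of proof.} The aim is to bound $\lVert\nabla J_{\Omega_R}(\varphi_R(X))\rVert_{H_0^1(\Omega_R)}$ by estimating $DJ_{\Omega_R}(\varphi_R(X))\,w$ for an arbitrary $w\in H_0^1(\Omega_R)$ with $\lVert w\rVert=1$, uniformly in $X=(x_1,\dots,x_n)\in\mathcal{U}_{2,R}$. With the notation \eqref{bar} we have $\varphi_R(X)=\sum_{i=1}^n\overline{V}_i$, and since each $\overline{V}_i=V_{x_i,R}=V_{x_i,1,R}$ solves \eqref{V} with right-hand side $f(\overline{U}_i)$, one expects
\[
  DJ_{\Omega_R}(\varphi_R(X))\,w=\int_{\Omega_R}\Bigl(\sum_{i=1}^n f(\overline{U}_i)-f\bigl(\sum_{i=1}^n\overline{V}_i\bigr)\Bigr)w .
\]
The plan is to make this identity precise up to an $O(R^{-1/2})$ error, then decompose the integrand as $\sum_i\bigl(f(\overline{U}_i)-f(\overline{V}_i)\bigr)+\bigl(\sum_i f(\overline{V}_i)-f(\sum_i\overline{V}_i)\bigr)$ and estimate the two groups by $O(R^{-1/2})$ and $O(\mathrm e^{-\alpha'\mu g_2(R)})$ respectively.

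For the rigorous form of the displayed identity I would repeat the argument of \eqref{eq:18} in Lemma~\ref{lem:nearness-u-v}. As $\overline{V}_i$ solves its equation only on $\Omega_R\cap\mathbb{L}_{x_i}$, fix for each $i$ a cut-off $\chi_i\in C^\infty$ equal to $1$ on $\mathbb{L}_{x_i}$, supported in $\mathbb{L}_{s,x_i}$ for a fixed $s\in(1,r_0)$, with $\lVert\nabla\chi_i\rVert_\infty$ bounded independently of $i$, $R$ and $x_i$, and split $w=\chi_i w+(1-\chi_i)w$. The term $(1-\chi_i)w$ gives no contribution to $\int_{\Omega_R}(\nabla\overline{V}_i\cdot\nabla w+\lambda\overline{V}_i w)$, since $\overline{V}_i$ and $\nabla\overline{V}_i$ vanish a.e.\ outside $\overline{\mathbb{L}_{x_i}}$ while $(1-\chi_i)w$ and its gradient vanish a.e.\ on $\mathbb{L}_{x_i}$. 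Testing the equation \eqref{V} for $V_{x_i,s,R}$ against $\chi_i w\in H_0^1(\Omega_R\cap\mathbb{L}_{s,x_i})$, using that $\chi_i=1$ on the support of $f(\overline{U}_i)$, and using $\lVert\overline{V}_i-V_{x_i,s,R}\rVert_{H^1(\mathbb{R}^N)}=O(R^{-1/2})$, which follows from \eqref{eq:13} and \eqref{eq:12} of Lemma~\ref{lem:u-v-est} since both functions are $O(R^{-1/2})$-close in $H^1$ to $U_{x_i,R}^\pm$, one obtains the displayed identity with an $O(R^{-1/2})$ error, uniformly in $\lVert w\rVert=1$ and in $X$ (the sum having the fixed number $n$ of terms).

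The first group of the decomposition is controlled by H\"older's inequality with exponents $2^*$ and $(2^*)'$ (replacing $2^*$ by any large finite exponent when $N\le2$; the embedding constant is bounded uniformly in $R$, using Corollary~\ref{cor:spectrum-positive-omega-r}) together with \eqref{eq:15} of Lemma~\ref{lem:u-v-est}, which gives $\int_{\Omega_R}\sum_i\lvert f(\overline{U}_i)-f(\overline{V}_i)\rvert\lvert w\rvert\le C\sum_i\lVert f(\overline{U}_i)-f(\overline{V}_i)\rVert_{L^{(2^*)'}}=O(R^{-1/2})$. For the second group, \eqref{eq:21} of Lemma~\ref{lem:splitting-f} gives $\bigl\lvert\sum_i f(\overline{V}_i)-f(\sum_i\overline{V}_i)\bigr\rvert\le\widetilde{C}_2\sum_{i<j}\lvert\overline{V}_i\overline{V}_j\rvert^{\alpha}$. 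Here I would invoke the pointwise bound $\lvert\overline{V}_i\rvert\le\lvert\overline{U}_i\rvert\le C\mathrm e^{-\mu\lvert y-x_i\rvert}$: the first inequality follows from the maximum principle (Corollary~\ref{rem:maximum-principle}), since $\overline{V}_i$ and $\overline{U}_i$ solve the same equation $-\Delta u+\lambda u=f(\overline{U}_i)$ on $\Omega_R\cap\mathbb{L}_{x_i}$ and $\lvert\overline{V}_i\rvert\le\lvert\overline{U}_i\rvert$ on $\partial(\Omega_R\cap\mathbb{L}_{x_i})$, where $\overline{V}_i=0$; the second follows from Lemma~\ref{decayU} together with the fact that $\overline{U}_i$ vanishes where the normal coordinate has modulus $\ge1$. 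Then H\"older's inequality and \eqref{eq:19} of Lemma~\ref{lem:interaction-exponential}, applied with $\mu_1=\mu_2=\mu\alpha(2^*)'$, give
\[
  \sup_{\lVert w\rVert=1}\int_{\Omega_R}\lvert\overline{V}_i\overline{V}_j\rvert^{\alpha}\lvert w\rvert
  \le C\Bigl(\int_{\mathbb{R}^N}\mathrm e^{-\mu\alpha(2^*)'(\lvert y-x_i\rvert+\lvert y-x_j\rvert)}\,\mathrm dy\Bigr)^{1/(2^*)'}
  \le C\mathrm e^{-\alpha'\mu\lvert x_i-x_j\rvert},
\]
where using the exponent $\alpha'$ instead of $\alpha$ absorbs both the strict inequality in Lemma~\ref{lem:interaction-exponential} and the division by $(2^*)'$; this is exactly the role of the hypothesis $\alpha'<\alpha$. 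Since $\lvert x_i-x_j\rvert>g_2(R)$ for all $i\ne j$ on $\mathcal{U}_{2,R}$ and the number of pairs is fixed, the second group contributes $O(\mathrm e^{-\alpha'\mu g_2(R)})$ uniformly in $X$. Adding the three contributions gives the assertion.

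The bulk of the work, and the main obstacle, is the first step: the approximate bumps $\overline{V}_i=V_{x_i,1,R}$ solve the equation only on the truncated cylinders $\Omega_R\cap\mathbb{L}_{x_i}$ and not on $\Omega_R$, which forces the cut-off argument and the comparison with $V_{x_i,s,R}$, $s>1$, exactly as in Lemma~\ref{lem:nearness-u-v}, and is the source of the $O(R^{-1/2})$ term. Once the sharp exponential decay rate $\mu$ for the $\overline{V}_i$ is in hand, the interaction term is soft.
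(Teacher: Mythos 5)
Your proof is correct and follows essentially the same route as the paper: split the gradient into the single-bump errors (controlled by the $V$--$U$ comparison of Lemmas~\ref{lem:u-v-est} and \ref{lem:nearness-u-v}, giving $O(R^{-1/2})$) plus the nonlinear interaction term (controlled by Lemma~\ref{lem:splitting-f}, exponential decay, and Lemma~\ref{lem:interaction-exponential}, with $\alpha'<\alpha$ absorbing the losses). The only cosmetic differences are that the paper invokes the exact identity $DJ_{\Omega_R}(\varphi_R(X))[v]=\sum_i DJ_{\Omega_R}(\overline{V}_i)[v]+\int(\sum_i f(\overline{V}_i)-f(\sum_i\overline{V}_i))v$ together with \eqref{eq:18} instead of redoing the cut-off argument inline, uses plain Cauchy--Schwarz in $L^2$ rather than the $L^{2^*}$ pairing, and replaces $\overline{V}_i\overline{V}_j$ by $\overline{U}_i\overline{U}_j$ via Lemma~\ref{lem:u-v-est} rather than your pointwise maximum-principle bound $\lvert\overline{V}_i\rvert\le\lvert\overline{U}_i\rvert$.
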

\begin{proof}
Fix $X=(x_{1},x_{2},\dots,x_{n})\in\mathcal{U}_{2,R}.$ If $v\in H_{0}
^{1}(\Omega_{R})$ satisfies $\left\Vert v\right\Vert _{H_{0}^{1}(\Omega_{R}
)}=1$ then, using Lemmas \ref{lem:nearness-u-v}, \ref{lem:splitting-f},
\ref{lem:u-v-est}, \ref{lem:decay-V} and \ref{lem:interaction-exponential}\ we
obtain
\begin{multline*}
\left\vert DJ_{\Omega_{R}}(\varphi_{R}(X))\left[  v\right]
\right\vert  \\
\begin{aligned}
  &=\left\vert {\sum_{i=1}^{n}}
    DJ_{\Omega_{R}}(\varphi_{R}(\overline{V}_{i}))\left[ v\right]
    +\int _{\Omega_{R}}\left( {\sum_{i=1}^{n}}
      f(\overline{V}_{i})-f\left( {\sum_{i=1}^{n}}
        \overline{V}_{i}\right)  \right)  v\right\vert \\
  & \leq {\sum_{i=1}^{n}} \lVert\nabla
  J_{\Omega_{R}}(\overline{V}_{i})\rVert_{H_{0}^{1}(\Omega_{R}
    )}+\left( \int_{\Omega_{R}}\left\vert {\sum_{i=1}^{n}}
      f(\overline{V}_{i})-f\left( {\sum_{i=1}^{n}}
        \overline{V}_{i}\right)  \right\vert ^{2}\right)  ^{1/2}\\
  & \leq O(R^{-1/2})+C {\sum_{i<j}} \left(
    \int_{\Omega_{R}}\lvert\overline{V}_{i}\overline{V}_{j}\rvert
    ^{2\alpha}\right)  ^{1/2}\\
  & =O(R^{-1/2})+C {\sum_{i<j}} \left(
    \int_{\Omega_{R}}\lvert\overline{U}_{i}\overline{U}_{j}\rvert
    ^{2\alpha}\right)  ^{1/2}\\
  & =O(R^{-1/2})+O(e^{-\alpha^{\prime}\mu g_{2}(R)}).
\end{aligned}
\end{multline*}
These estimates are independent of the choice of $X.$
\end{proof}
Recall that $n=2k$ and set
\begin{equation*}
E_{n}:=k\left[  J_{\mathbb{L}}(U^{+})+J_{\mathbb{L}}(U^{-})\right]  .
\end{equation*}
\begin{proposition}
\label{prop:jr-high-on-boundary} There exists $\beta>0$ such that
\begin{equation*}
\inf_{X\in\partial\mathcal{U}_{1,R}}J_{\Omega_{R}}(\varphi_{R}(X))\geq
E_{n}+\beta\mathrm{e}^{-\mu g_{1}(R)}+o(\mathrm{e}^{-\mu g_{1}(R)}
)+O(R^{-2/3})
\end{equation*}
as $R\rightarrow\infty$.
\end{proposition}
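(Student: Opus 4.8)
The plan is to expand $J_{\Omega_{R}}(\varphi_{R}(X))$ around the sum of the energies of the individual bumps, to single out the leading pairwise interaction, and to use the alternation of signs together with Lemma~\ref{lem:chains} to show this interaction is positive and of order $e^{-\mu g_{1}(R)}$. Throughout we may assume $g_{1}(R)=o(R^{1/2})$: otherwise $e^{-\mu g_{1}(R)}=o(R^{-2/3})$ and the asserted bound follows already from the expansion below together with the crude estimates $|\varepsilon_{ij}|\le C$, \eqref{eq:17} and Lemma~\ref{lem:u-v-est}. Write $\langle\cdot,\cdot\rangle$ and $\|\cdot\|$ for the scalar product and norm on $H_{0}^{1}(\Omega_{R})$ attached to $-\Delta+\lambda$ (equivalent to the usual ones for $R$ large by Corollary~\ref{cor:spectrum-positive-omega-r}) and abbreviate $\overline{V}_{i},\overline{U}_{i}$ as in \eqref{bar}. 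Expanding the quadratic part of $J_{\Omega_{R}}$ and applying \eqref{eq:22} of Lemma~\ref{lem:splitting-f} to $F\bigl(\sum_{i}\overline{V}_{i}\bigr)$ — licit since the $\overline{V}_{i}$ are uniformly bounded by Lemma~\ref{lem:decay-V} — yields
\[
J_{\Omega_{R}}(\varphi_{R}(X))=\sum_{i=1}^{n}J_{\Omega_{R}}(\overline{V}_{i})+\sum_{i\neq j}\Bigl(\tfrac{1}{2}\langle\overline{V}_{i},\overline{V}_{j}\rangle-\int_{\Omega_{R}}f(\overline{V}_{i})\,\overline{V}_{j}\Bigr)+\mathcal{R}_{X},
\]
with $|\mathcal{R}_{X}|\le C\sum_{i<j}\int_{\mathbb{R}^{N}}|\overline{V}_{i}\overline{V}_{j}|^{2\alpha}+C\sum_{i<j<k}\int_{\mathbb{R}^{N}}|\overline{V}_{i}\overline{V}_{j}\overline{V}_{k}|^{2/3}$. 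By \eqref{eq:17}, and since $k$ of the indices $1,\dots,2k$ are odd and $k$ are even, $\sum_{i}J_{\Omega_{R}}(\overline{V}_{i})=k[J_{\mathbb{L}}(U^{+})+J_{\mathbb{L}}(U^{-})]+O(R^{-1})=E_{n}+O(R^{-1})$; and feeding the exponential decay of Lemma~\ref{lem:decay-V} (whose rate may be taken as close to $\mu$ as we wish) into Lemma~\ref{lem:interaction-exponential}, using Lemma~\ref{lem:chains} — \eqref{sR} bounds $\min_{x}\sum_{\ell}|x-x_{\ell}|$ below by $2g_{1}(R)$ for any three of the $x_{i}$ and \eqref{d>2} bounds two-point distances — together with $2\alpha>1$, every integral in $\mathcal{R}_{X}$ decays faster than $e^{-\mu g_{1}(R)}$, so $\mathcal{R}_{X}=o(e^{-\mu g_{1}(R)})$ uniformly in $X$.

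Next I reduce the bilinear interaction to the limit profiles. Because $\overline{V}_{i}\in H_{0}^{1}(\Omega_{R}\cap\mathbb{L}_{x_{i}})$ solves $-\Delta\overline{V}_{i}+\lambda\overline{V}_{i}=f(\overline{U}_{i})$ there, Green's formula gives $\langle\overline{V}_{i},\overline{V}_{j}\rangle=\int f(\overline{U}_{i})\overline{V}_{j}+B_{ij}$, the boundary term $B_{ij}$ being supported on $\partial\mathbb{L}_{x_{i}}\cap\Omega_{R}$ (the part of $\partial(\Omega_{R}\cap\mathbb{L}_{x_{i}})$ lying on $\partial\Omega_{R}$ contributes nothing, $\overline{V}_{j}$ vanishing there). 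Using \eqref{eq:13} and \eqref{eq:15} of Lemma~\ref{lem:u-v-est} to replace $\overline{V}_{j}$ by $\overline{U}_{j}$ and $f(\overline{V}_{i})$ by $f(\overline{U}_{i})$, and estimating $B_{ij}$ and the remaining mixed terms by the decay bounds of Lemmas~\ref{lem:decay-V}, \ref{decayU} and the geometric estimates \eqref{eq:55}--\eqref{eq:56} (on which $\mathbb{L}_{x_{i}}$ differs from $\Omega_{R}$ only in a layer of thickness $O(\xi^{2}/R)$ at axial distance $\xi$, where $\overline{U}_{j}$ is small), one obtains
\[
\sum_{i\neq j}\Bigl(\tfrac{1}{2}\langle\overline{V}_{i},\overline{V}_{j}\rangle-\int_{\Omega_{R}}f(\overline{V}_{i})\,\overline{V}_{j}\Bigr)=-\tfrac{1}{2}\sum_{i\neq j}\varepsilon_{ij}+o(e^{-\mu g_{1}(R)})+O(R^{-2/3}),\qquad\varepsilon_{ij}:=\int_{\mathbb{R}^{N}}f(\overline{U}_{i})\,\overline{U}_{j},
\]
the polynomial remainder absorbing the $O(R^{-1})$ above and the $O(R^{-\kappa})$ errors produced by Lemma~\ref{lem:u-v-est} and the geometric approximations; hence $J_{\Omega_{R}}(\varphi_{R}(X))=E_{n}-\tfrac{1}{2}\sum_{i\neq j}\varepsilon_{ij}+o(e^{-\mu g_{1}(R)})+O(R^{-2/3})$.

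It remains to bound $-\sum_{i\neq j}\varepsilon_{ij}$ from below. By \eqref{bar}, two consecutive indices — including the pair $(n,1)$, as $n=2k$ is even — carry opposite signs, so by (H4) $f(\overline{U}_{i})\overline{U}_{j}\le 0$ pointwise, i.e.\ $\varepsilon_{ij}\le0$ whenever $d_{n}(i,j)=1$. Any $X\in\partial\mathcal{U}_{1,R}$ is an $n$-chain with $\min_{i\neq j}|x_{i}-x_{j}|=g_{1}(R)$; the minimizing pair $(i_{0},j_{0})$ satisfies $d_{n}(i_{0},j_{0})=1$ by \eqref{d>2}, hence carries opposite signs, and since $\Gamma\in C^{3}$ and $g_{1}(R)^{2}=o(R)$ a fixed small ball $K$ centred at $x_{i_{0}}$ lies inside both $\mathbb{L}_{x_{i_{0}}}$ and $\mathbb{L}_{x_{j_{0}}}$, with $x_{i_{0}}$ at transverse distance $o(1)$ from the axis of the latter. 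On $K$ one has $|f(\overline{U}_{i_{0}})|\ge f_{0}>0$ (from the strict extremum of $U^{\pm}$ at the origin and (H4) with $f$ continuous) and, by the lower bound in Lemma~\ref{decayU}, $|\overline{U}_{j_{0}}|\ge c\,e^{-\mu g_{1}(R)}$, so $-\varepsilon_{i_{0}j_{0}}=\int|f(\overline{U}_{i_{0}})|\,|\overline{U}_{j_{0}}|\ge2\beta\,e^{-\mu g_{1}(R)}$ for some $\beta>0$ independent of $X$ and $R$. All other consecutive pairs still give $-\varepsilon_{ij}\ge0$, while for $d_{n}(i,j)\ge2$ we have $|x_{i}-x_{j}|\ge\tfrac{4}{3}g_{1}(R)$, so Lemma~\ref{decayU} and \eqref{eq:19} yield $|\varepsilon_{ij}|\le Ce^{-\frac{4}{3}\mu'g_{1}(R)}$ with $\mu'$ as close to $\mu$ as desired, whence $\sum_{d_{n}(i,j)\ge2}|\varepsilon_{ij}|=o(e^{-\mu g_{1}(R)})$. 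Therefore $-\tfrac{1}{2}\sum_{i\neq j}\varepsilon_{ij}\ge\beta\,e^{-\mu g_{1}(R)}+o(e^{-\mu g_{1}(R)})$, and substituting into the expansion and passing to the infimum over $X\in\partial\mathcal{U}_{1,R}$ proves the proposition.

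The crux is the reduction step: one must extract the pairwise interaction with the \emph{sharp} rate $\mu$ — both the lower bound $-\varepsilon_{i_{0}j_{0}}\gtrsim e^{-\mu g_{1}(R)}$ for the closest, opposite-sign pair and the matching upper bounds $|\varepsilon_{ij}|\lesssim e^{-\mu'|x_{i}-x_{j}|}$ with $\mu'$ near $\mu$ — and simultaneously verify that every approximation error ($V$ versus $U$, the wall integrals $B_{ij}$, the triple interactions) is genuinely $o(e^{-\mu g_{1}(R)})$ after exploiting the factor $\tfrac{4}{3}$ in \eqref{d>2} and the slow growth of $g_{1}$; this forces a joint use of the precise decay of Lemma~\ref{decayU}, the estimates of Lemma~\ref{lem:u-v-est}, and the geometry of Lemma~\ref{lem:chains} and \eqref{eq:55}--\eqref{eq:56}.
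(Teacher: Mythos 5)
Your argument follows essentially the same route as the paper's proof: expand $J_{\Omega_R}(\varphi_R(X))$ via Lemma~\ref{lem:splitting-f}, pass from the $\overline V_i$ to the limit profiles $\overline U_i$ with Lemma~\ref{lem:u-v-est}, and then use (H4) together with Lemma~\ref{lem:chains} to show that the closest pair, necessarily consecutive and of opposite sign, contributes $+\beta\mathrm{e}^{-\mu g_1(R)}$, while all other consecutive interactions are nonnegative and the pairs with $d_n(i,j)\geq 2$ and the triple terms are $o(\mathrm{e}^{-\mu g_1(R)})$ thanks to the factor $\tfrac43$ in \eqref{d>2} and to \eqref{sR}. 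The boundary term $B_{ij}$ you make explicit is a point the paper glosses over when it identifies $\int(\nabla\overline V_i\cdot\nabla\overline V_j+\lambda\overline V_i\overline V_j)$ with $\int f(\overline U_i)\overline V_j$; your thin-layer treatment of it (using that $\overline V_j$ vanishes on $\partial\Omega_R$, has bounded gradient, and that $\partial\mathbb{L}_{x_i}\cap\Omega_R$ lies within $O((1+\xi^2)/R)$ of $\partial\Omega_R$ by \eqref{eq:55}--\eqref{eq:56}) is sound and yields an admissible $O(R^{-1})$.

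One step, however, is not supported as written: for the remainder $\mathcal{R}_X$ you keep the $\overline V_i$'s and invoke Lemma~\ref{lem:decay-V} ``whose rate may be taken as close to $\mu$ as we wish''. That lemma only gives \emph{some} fixed rate $c_4>0$ (in its proof $\nu$ is merely a small constant), and if $c_4$ is small the resulting bound $\int|\overline V_i\overline V_j|^{2\alpha}\lesssim \mathrm{e}^{-2\alpha c_4' g_1(R)}$ need not be $o(\mathrm{e}^{-\mu g_1(R)})$, so the conclusion does not follow from the cited statement. The repair is exactly what you already do for the bilinear terms, and what the paper does in \eqref{eq:107}: first replace $\overline V_i$ by $\overline U_i$ inside $\mathcal{R}_X$ using \eqref{eq:13} (with $p=2/3$ for the triple products), at cost $O(R^{-2/3})$, and only then exploit the sharp rate $\mu$ of Lemma~\ref{decayU} together with Lemmas~\ref{lem:interaction-exponential} and \ref{lem:chains} and $2\alpha>1$. (Alternatively one could re-run the comparison argument of Lemma~\ref{lem:decay-V} with $\vartheta_{1,r}$, $r$ close to $1$, to sharpen the rate, but that is an additional argument, not the lemma as stated.) A smaller slip: the preliminary reduction to $g_1(R)=o(R^{1/2})$ is harmless, but justifying the complementary regime by the crude bound $|\varepsilon_{ij}|\le C$ is not enough -- there you still need $\varepsilon_{ij}\le0$ for consecutive pairs and the smallness of the $d_n(i,j)\ge2$ interactions, both of which are available without any restriction on $g_1$.
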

\begin{proof}
If $X=(x_{1},x_{2},\dots,x_{n})\in\partial\mathcal{U}_{1,R}$ there are
$i_{0},j_{0}\in\{1,2,\dots,n\}$ such that $\lvert x_{i_{0}}-x_{j_{0}}
\rvert=g_{1}(R).$ By Lemma \ref{lem:chains}, $d_{n}(i_{0},j_{0})=1$ for $R$
large enough. Then, assumption (H4) implies that
\begin{equation}
f(\overline{U}_{i_{0}})\overline{U}_{j_{0}}\leq0. \label{eq:117}
\end{equation}
On the other hand, it follows from Lemma \ref{decayU} and property
\eqref{eq:25} that there exist $r,\varepsilon>0$ such that $\left\vert
f(\overline{U}_{i_{0}})\right\vert \geq\varepsilon$ and $\left\vert
\overline{U}_{j_{0}}\right\vert \geq C_{1}\mathrm{e}^{-\mu g_{1}(R)}$ in
$B_{r}(x_{i_{0}})$ for $R$ large enough, independently of the choice of
$X\in\partial\mathcal{U}_{1,R}$. Hence
\begin{equation}
\frac{1}{2}\int_{\mathbb{R}^{N}}\left\vert f(\overline{U}_{i_{0}})\overline
{U}_{j_{0}}\right\vert \,\mathrm{d}x\geq\beta\mathrm{e}^{-\mu g_{1}(R)}
\label{eq:23}
\end{equation}
for some $\beta>0$ and large $R$. Moreover, Lemmas \ref{lem:chains},
\ref{lem:interaction-exponential} and \ref{decayU} yield
\begin{equation}
\int_{\mathbb{R}^{N}}\left\vert f(\overline{U}_{i})\overline{U}_{j}\right\vert
\,\mathrm{d}x=o(\mathrm{e}^{-\mu g_{1}(R)})\qquad\text{if }d_{n}(i,j)\geq2,
\label{eq:27}
\end{equation}
as $R\rightarrow\infty$.
Since $\overline{U}_{i}$ and $\overline{V}_{i}$ are uniformly bounded, using
Lemma~\ref{lem:splitting-f}, estimate \eqref{eq:13}, and Lemmas
\ref{lem:chains} and \ref{lem:interaction-exponential}, we obtain
\begin{align}
&  \left\vert \int_{\Omega_{R}}\left[  F(
{\sum_{i}}
\overline{V}_{i})-
{\sum_{i}}
F(\overline{V}_{i})\right]  -
{\sum_{i\neq j}}
\int_{\Omega_{R}}f(\overline{V}_{i})\overline{V}_{j}\right\vert \label{eq:107}
\\
&  \leq C
{\sum_{i<j}}
\int_{\Omega_{R}}\left\vert \overline{V}_{i}\overline{V}_{j}\right\vert
^{2\alpha}+C
{\sum_{i<j<k}}
\int_{\Omega_{R}}\left\vert \overline{V}_{i}\overline{V}_{j}\overline{V}
_{k}\right\vert ^{2/3}\nonumber\\
&  =C
{\sum_{i<j}}
\int_{\Omega_{R}}\left\vert \overline{U}_{i}\overline{U}_{j}\right\vert
^{2\alpha}+C
{\sum_{i<j<k}}
\int_{\Omega_{R}}\left\vert \overline{U}_{i}\overline{U}_{j}\overline{U}
_{k}\right\vert ^{2/3}+O(R^{-2/3})\nonumber\\
&  =o(\mathrm{e}^{-\mu g_{1}(R)})+O(R^{-2/3}).\nonumber
\end{align}
Therefore, using estimates \eqref{eq:17}, \eqref{eq:13}, \eqref{eq:117},
\eqref{eq:23} and \eqref{eq:27} we conclude that
\begin{align*}
J_{\Omega_{R}}(\varphi_{R}(X))=  &
{\sum_{i=1}^{n}}
J_{\Omega_{R}}(\overline{V}_{i})+\frac{1}{2}
{\sum_{i\neq j}}
\int_{\Omega_{R}}\left(  \nabla\overline{V}_{i}\cdot\nabla\overline{V}
_{j}+\lambda\overline{V}_{i}\overline{V}_{j}\right)  \,\mathrm{d}x\\
&  -\int_{\Omega_{R}}\left[  F(
{\sum_{i}}
\overline{V}_{i})-
{\sum_{i}}
F(\overline{V}_{i})\right]  \,\mathrm{d}x\\
=  &  E_{n}+\frac{1}{2}
{\sum_{i\neq j}}
\int_{\Omega_{R}}f(\overline{U}_{i})\overline{V}_{j}\,\mathrm{d}x-
{\sum_{i\neq j}}
\int_{\Omega_{R}}f(\overline{V}_{i})\overline{V}_{j}\,\mathrm{d}x\\
&  +o(\mathrm{e}^{-\mu g_{1}(R)})+O(R^{-2/3})\\
=  &  E_{n}-\frac{1}{2}
{\sum_{i\neq j}}
\int_{\Omega_{R}}f(\overline{U}_{i})\overline{U}_{j}\,\mathrm{d}
x+o(\mathrm{e}^{-\mu g_{1}(R)})+O(R^{-2/3})\\
\geq &  E_{n}+\frac{1}{2}\int_{\mathbb{R}^{N}}\left\vert f(\overline{U}
_{i_{0}})\overline{U}_{j_{0}}\right\vert \,\mathrm{d}x-\frac{1}{2}
{\sum_{d_{n}(i,j)\geq2}}
\int_{\Omega_{R}}\left\vert f(\overline{U}_{i})\overline{U}_{j}\right\vert
\,\mathrm{d}x\\
&  +o(\mathrm{e}^{-\mu g_{1}(R)})+O(R^{-2/3})\\
\geq &  E_{n}+\beta\mathrm{e}^{-\mu g_{1}(R)}+o(\mathrm{e}^{-\mu g_{1}
(R)})+O(R^{-2/3}).
\end{align*}
This asymptotic estimate is independent of $X$.
\end{proof}
\begin{proposition}
\label{prop:jr-low-in-interior} The estimate
\begin{equation*}
\inf_{X\in\mathcal{U}_{1,R}}J_{\Omega_{R}}(\varphi_{R}(X))\leq E_{n}
+o(\mathrm{e}^{-\mu g_{1}(R)})+O(R^{-2/3})
\end{equation*}
holds true as $R\rightarrow\infty$.
\end{proposition}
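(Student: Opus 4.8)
The plan is to produce the bound on the infimum by exhibiting one explicit configuration $X_{R}\in\mathcal{U}_{1,R}$ at which $J_{\Omega_{R}}(\varphi_{R}(X_{R}))$ is already at most $E_{n}+o(\mathrm{e}^{-\mu g_{1}(R)})+O(R^{-2/3})$. Since $\mathcal{U}_{1,R}\subset\mathcal{U}_{2,R}$, $\varphi_{R}$ is defined at such a point, and the asserted estimate is a statement about $\inf_{X\in\mathcal{U}_{1,R}}$, this suffices.

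First I would spread the $n=2k$ bumps as far apart along $\Gamma_{R}$ as the geometry allows: set $t_{i}:=(i-1)/n$ and $x_{R,i}:=R\gamma(t_{i})$ for $i=1,\dots,n$. Then $0=t_{1}<t_{2}<\dots<t_{n}=(n-1)/n<1$, so $X_{R}:=(x_{R,1},\dots,x_{R,n})$ is an $n$-chain in $\Gamma_{R}$ in the sense of \eqref{eq:33}. Because $\gamma$ is injective on $[0,1)$, the finite set $\{\,\lvert\gamma(t_{i})-\gamma(t_{j})\rvert\mid i\neq j\,\}$ has a positive minimum $c=c(n,\gamma)>0$, whence $\lvert x_{R,i}-x_{R,j}\rvert\geq cR$ for all $i\neq j$. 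By \eqref{eq:25} we have $g_{1}(R)=o(R)$, so $cR>g_{1}(R)$ for $R$ large; hence $X_{R}\notin D_{1,R}$ and therefore $X_{R}\in\mathcal{U}_{1,R}$.

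Next I would reuse the energy expansion already obtained in the proof of Proposition~\ref{prop:jr-high-on-boundary}. That computation, which relies only on Lemmas~\ref{lem:interaction-exponential}, \ref{lem:splitting-f}, \ref{lem:chains}, \ref{lem:u-v-est}, \ref{decayU} and estimate~\eqref{eq:17}, is valid uniformly for every $X\in\overline{\mathcal{U}_{1,R}}$ and yields
\begin{equation*}
J_{\Omega_{R}}(\varphi_{R}(X))=E_{n}-\frac{1}{2}\sum_{i\neq j}\int_{\Omega_{R}}f(\overline{U}_{i})\overline{U}_{j}+o(\mathrm{e}^{-\mu g_{1}(R)})+O(R^{-2/3}).
\end{equation*}
Evaluating at $X=X_{R}$, every separation is of order $R$, so by Lemma~\ref{lem:decay-V} (and (H3), giving $\lvert f(\overline{U}_{i})\rvert$ and $\lvert\overline{U}_{j}\rvert$ decay like $\mathrm{e}^{-c_{4}\lvert\,\cdot\,-x_{R,i}\rvert}$ and $\mathrm{e}^{-c_{4}\lvert\,\cdot\,-x_{R,j}\rvert}$) together with Lemma~\ref{lem:interaction-exponential},
\begin{equation*}
\int_{\Omega_{R}}\lvert f(\overline{U}_{i})\overline{U}_{j}\rvert\leq C\mathrm{e}^{-\bar{c}\,\lvert x_{R,i}-x_{R,j}\rvert}\leq C\mathrm{e}^{-\bar{c}cR}
\end{equation*}
for some $\bar{c}\in(0,c_{4})$ and all $i\neq j$. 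Since $g_{1}(R)=o(R)$, we get $\mathrm{e}^{-\bar{c}cR}=o(\mathrm{e}^{-\mu g_{1}(R)})$, so the whole double sum is $o(\mathrm{e}^{-\mu g_{1}(R)})$, and therefore
\begin{equation*}
\inf_{X\in\mathcal{U}_{1,R}}J_{\Omega_{R}}(\varphi_{R}(X))\leq J_{\Omega_{R}}(\varphi_{R}(X_{R}))\leq E_{n}+o(\mathrm{e}^{-\mu g_{1}(R)})+O(R^{-2/3}),
\end{equation*}
which is the claim.

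The argument is short because all the analytic work is already in the preceding lemmas; the one point requiring care is the interplay of scales. The configuration $X_{R}$ must simultaneously sit in $\mathcal{U}_{1,R}$ (forcing all separations above $g_{1}(R)$) and make every bump--bump interaction $o(\mathrm{e}^{-\mu g_{1}(R)})$; both hold precisely because $g_{1}(R)=o(R)$ permits equally spaced bumps at mutual distances of order $R\gg g_{1}(R)$. This is the conceptual heart of the estimate and explains why the bound on $\inf_{\mathcal{U}_{1,R}}J_{\Omega_{R}}(\varphi_{R}(\cdot))$ is governed by the $O(R^{-2/3})$ term, with no surviving exponential interaction contribution.
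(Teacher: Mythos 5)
Your proposal is correct and follows essentially the same route as the paper: the authors also fix a chain $x_{R,i}=R\gamma(t_i)$ with the $t_i$ independent of $R$ (so all separations are of order $R\gg g_1(R)$, placing $X_R\in\mathcal{U}_{1,R}$ by \eqref{eq:25}), reuse the energy expansion from Proposition~\ref{prop:jr-high-on-boundary}, and kill the interaction terms via Lemma~\ref{lem:interaction-exponential} together with the exponential decay of the bumps, concluding they are $o(\mathrm{e}^{-\mu g_1(R)})$. The only cosmetic differences are your explicit choice $t_i=(i-1)/n$ and your use of the decay rate $c_4$ from Lemma~\ref{lem:decay-V} in place of the paper's $\mu-\varepsilon$ from Lemma~\ref{decayU}, neither of which changes the argument.
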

\begin{proof}
Fix $0<t_{1}<t_{2}<\dots<t_{n}<1$ and set $x_{R,i}:=R\gamma(t_{i})\in
\Gamma_{R}.$ By \eqref{eq:25}, $X_{R}:=(x_{R,1},x_{R,2},\dots,x_{R,n}
)\in\mathcal{U}_{1,R}$ for large $R$. As in the proof of
Proposition~\ref{prop:jr-high-on-boundary} we obtain
\begin{equation*}
J_{\Omega_{R}}(\varphi_{R}(X_{R}))=E_{n}-\frac{1}{2}
{\sum_{i\neq j}}
\int_{\mathbb{R}^{N}}f(\overline{U}_{i})\overline{U}_{j}\,\mathrm{d}
x+o(\mathrm{e}^{-\mu g_{1}(R)})+O(R^{-2/3}).
\end{equation*}
Choose $\varepsilon\in(0,\mu)$ and $\delta\in(0,{\min_{i\neq j}}\left\vert
\gamma(t_{i})-\gamma(t_{j})\right\vert )$. Then $\left\vert x_{R,i}
-x_{R,j}\right\vert \geq\delta R$ if $i\neq j$.
Lemma~\ref{lem:interaction-exponential} and Lemma \ref{decayU} imply that
\begin{equation*}
\int_{\mathbb{R}^{N}}f(\overline{U}_{i})\overline{U}_{j}\,\mathrm{d}
x=O(\mathrm{e}^{-(\mu-\varepsilon)\delta R})=o(\mathrm{e}^{-\mu g_{1}
(R)})\qquad\text{if }i\neq j,
\end{equation*}
and our claim follows.
\end{proof}
\subsection{The open-end tube case}
We now suppose that $\gamma(0)\neq\gamma(1).$ In this case we need also to
estimate the effect of the ends of the tubular domain on $V_{x,R}^{\pm}$. We
start by comparing the solutions $U^{\pm}$ to the limit problem in
$\mathbb{L}$\ with their projections onto a finite cylinder
\begin{equation*}
\mathbb{L}_{a,b}:=(-a,b)\times B_{1}^{N-1},\text{\qquad}a,b>0.
\end{equation*}
Let $\widetilde{U}_{a,b}^{\pm}$ be the unique solution of
\begin{equation}
\left\{
\begin{array}
[c]{ll}
-\Delta u+\lambda u=f(U^{\pm}) & \text{in }\mathbb{L}_{a,b}\text{,}\\
u=0 & \text{on }\partial\mathbb{L}_{a,b}\text{.}
\end{array}
\right.  \label{Utilde}
\end{equation}
Again, we consider $\widetilde{U}_{a,b}^{\pm}$ to be defined in $\mathbb{R}
^{N}$.
\begin{lemma}
\label{lem:tu-u-est-open}The inequalities
\begin{equation}
0\leq\widetilde{U}_{a,b}^{+}(\xi,\eta)\leq U^{+}(\xi,\eta),\qquad U^{-}
(\xi,\eta)\leq\widetilde{U}_{a,b}^{-}(\xi,\eta)\leq0, \label{eq:108}
\end{equation}
and
\begin{equation}
\lvert U^{\pm}(\xi,\eta)-\widetilde{U}_{a,b}^{\pm}(\xi,\eta)\rvert\leq
C_{2}\vartheta_{1,1}(\eta)\left(  \mathrm{e}^{-\mu(a+\lvert\xi+a\rvert
)}+\mathrm{e}^{-\mu(b+\lvert\xi-b\rvert)}\right)  \label{eq:43}
\end{equation}
hold true for all $(\xi,\eta)\in\mathbb{L}$, where $C_{2}$ is the same
constant as in \emph{Lemma \ref{decayU}}. Moreover, there are $C_{5},C_{6}>0$
such that
\begin{equation}
C_{5}\mathrm{e}^{-\mu\min\{a,b\}}\leq\lVert U^{\pm}-\widetilde{U}_{a,b}^{\pm
}\rVert_{H_{0}^{1}(\mathbb{L})}\leq C_{6}\mathrm{e}^{-\mu\min\{a,b\}}.
\label{eq:44}
\end{equation}
\end{lemma}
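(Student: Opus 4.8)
The plan is to exploit the maximum principle on the cylinder $\mathbb{L}_{a,b}$, using as barriers suitable exponential functions built from $\vartheta_{1,1}$. First I would establish the pointwise sign and monotonicity bounds \eqref{eq:108}. Note that $U^{+}$ restricted to $\overline{\mathbb{L}_{a,b}}$ satisfies $-\Delta U^{+}+\lambda U^{+}=f(U^{+})$ with $U^{+}\geq 0$ on $\partial\mathbb{L}_{a,b}$ (since $U^{+}>0$ in $\mathbb{L}$), while $\widetilde U_{a,b}^{+}=0$ there; hence $U^{+}-\widetilde U_{a,b}^{+}$ satisfies $(-\Delta+\lambda)(U^{+}-\widetilde U_{a,b}^{+})=0$ in $\mathbb{L}_{a,b}$ and is $\geq 0$ on the boundary, so by Corollary~\ref{rem:maximum-principle} it is nonnegative inside, giving $\widetilde U_{a,b}^{+}\leq U^{+}$. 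Since $f(U^{+})\geq 0$ by (H4) and $U^{+}>0$ on $\mathbb{L}$, positivity of $\widetilde U_{a,b}^{+}$ follows from the maximum principle as well (this is essentially the remark in Section~\ref{sec:energy-estimates}). The case of $U^{-}$ is symmetric.

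Next, for the quantitative bound \eqref{eq:43}, I would again look at $w:=U^{+}-\widetilde U_{a,b}^{+}\geq 0$, which is a nonnegative solution of $(-\Delta+\lambda)w=0$ in $\mathbb{L}_{a,b}$ vanishing on the lateral boundary $(-a,b)\times\partial B_{1}^{N-1}$ and equal to $U^{+}$ on the two end disks $\{-a\}\times B_{1}^{N-1}$ and $\{b\}\times B_{1}^{N-1}$. By Lemma~\ref{decayU}, on those end disks $w=U^{+}\leq C_{2}\mathrm{e}^{-\mu a}\vartheta_{1,1}(\eta)$ and $\leq C_{2}\mathrm{e}^{-\mu b}\vartheta_{1,1}(\eta)$ respectively. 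The natural comparison function is
\begin{equation*}
Z(\xi,\eta):=C_{2}\vartheta_{1,1}(\eta)\left(\mathrm{e}^{-\mu(a+\lvert\xi+a\rvert)}+\mathrm{e}^{-\mu(b+\lvert\xi-b\rvert)}\right),
\end{equation*}
and one checks directly, using $-\Delta_{\eta}\vartheta_{1,1}=\lambda_{1,1}\vartheta_{1,1}$ and $\mu^{2}=\lambda+\lambda_{1,1}$, that away from $\xi=-a$ and $\xi=b$ one has $(-\Delta+\lambda)Z=0$, while at the kinks $\xi=-a$, $\xi=b$ the second $\xi$-derivative produces a nonnegative distributional mass (the function $\xi\mapsto\mathrm{e}^{-\mu\lvert\xi-c\rvert}$ has a downward kink), so $Z$ is a supersolution of $-\Delta+\lambda$ on $\mathbb{L}_{a,b}$. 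On the lateral boundary $Z=0=w$, and on the two end disks $Z\geq C_{2}\mathrm{e}^{-\mu a}\vartheta_{1,1}(\eta)\geq w$ and similarly at $\xi=b$. By Corollary~\ref{rem:maximum-principle}, $w\leq Z$ on $\mathbb{L}_{a,b}$, which is \eqref{eq:43} for $U^{+}$; the estimate for $U^{-}$ follows by applying the same argument to $-U^{-}$ (or $-\widetilde U_{a,b}^{-}$), which is positive.

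Finally, for the $H_{0}^{1}$-norm equivalence \eqref{eq:44}: the upper bound follows by integrating the square of \eqref{eq:43} over $\mathbb{L}$ and, for the gradient, testing the equation $(-\Delta+\lambda)w=0$ in $\mathbb{L}_{a,b}$ against $w$ itself, which by (H1) and Lemma~\ref{lem:schroed-invertible} controls $\lVert w\rVert_{H_{0}^{1}}^{2}$ by a constant times $\int_{\mathbb{L}_{a,b}}w^{2}$, already estimated; one extends $w$ by $U^{\pm}$ outside $\mathbb{L}_{a,b}$ to get the norm over all of $\mathbb{L}$, noting that the tails $\lvert\xi\rvert>\max\{a,b\}$ contribute only $O(\mathrm{e}^{-2\mu\max\{a,b\}})$ by Lemma~\ref{decayU}. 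The lower bound is the point that requires slightly more care: I would use the \emph{lower} bound in Lemma~\ref{decayU}, namely $\lvert U^{\pm}(\xi,\eta)\rvert\geq C_{1}\mathrm{e}^{-\mu\lvert\xi\rvert}\vartheta_{1,1}(\eta)$, together with the sign information \eqref{eq:108}, to see that on a fixed-size region near the end $\xi=\min\{a,b\}$ (say within the slab $\{\min\{a,b\}-1<\xi<\min\{a,b\}\}$ where $U^{\pm}$ has size $\gtrsim\mathrm{e}^{-\mu\min\{a,b\}}$ but $\widetilde U_{a,b}^{\pm}$ is forced small by the Dirichlet condition at that end), the difference $\lvert U^{\pm}-\widetilde U_{a,b}^{\pm}\rvert$ is bounded below by a constant times $\mathrm{e}^{-\mu\min\{a,b\}}$ on a set of fixed positive measure, giving $\lVert U^{\pm}-\widetilde U_{a,b}^{\pm}\rVert_{L^{2}}\geq C\mathrm{e}^{-\mu\min\{a,b\}}$ and a fortiori the claimed $H_{0}^{1}$ lower bound. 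The main obstacle is making this last quantitative lower bound clean and uniform: one needs that $\widetilde U_{a,b}^{\pm}$ is genuinely smaller than $U^{\pm}$ near the shorter end — which follows because $\widetilde U_{a,b}^{\pm}$ vanishes at $\xi=\min\{a,b\}$ while $U^{\pm}$ does not — but turning "vanishes on the end disk" into "is at most half of $U^{\pm}$ on a slab of width one" requires either a boundary Harnack/barrier argument or a direct comparison with the one-dimensional profile, and this is the step I would spend the most effort on.
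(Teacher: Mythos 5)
Your handling of \eqref{eq:108} and \eqref{eq:43} is essentially the paper's argument and is correct: inside $\mathbb{L}_{a,b}$ the comparison functions $\vartheta_{1,1}(\eta)\mathrm{e}^{-\mu(a+\lvert\xi+a\rvert)}$ and $\vartheta_{1,1}(\eta)\mathrm{e}^{-\mu(b+\lvert\xi-b\rvert)}$ are exact solutions of $(-\Delta+\lambda)v=0$ because $\mu^{2}=\lambda+\lambda_{1,1}$ (note the kinks at $\xi=-a,b$ lie on $\partial\mathbb{L}_{a,b}$, so no distributional supersolution argument is even needed), and comparison on the end disks via the upper bound of Lemma~\ref{decayU} gives \eqref{eq:43}; outside $\mathbb{L}_{a,b}$ the estimate is trivial. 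The genuine gap is the lower bound in \eqref{eq:44}, which you explicitly leave unresolved. The missing step is simply the same comparison run in the opposite direction, with no boundary Harnack input: each of the two exponential functions above, multiplied by $C_{1}$, solves the equation in $\mathbb{L}_{a,b}$, vanishes on the lateral boundary, and on the end disks is dominated by $\lvert Y_{a,b}\rvert$, $Y_{a,b}:=U^{\pm}-\widetilde U_{a,b}^{\pm}$, because of the \emph{lower} bound in Lemma~\ref{decayU} (e.g.\ at $\xi=-a$ the first function equals $C_{1}\vartheta_{1,1}(\eta)\mathrm{e}^{-\mu a}\leq\lvert U^{\pm}(-a,\eta)\rvert=\lvert Y_{a,b}(-a,\eta)\rvert$). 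Hence the maximum principle yields the global pointwise bound
\begin{equation*}
\lvert Y_{a,b}(\xi,\eta)\rvert\geq C_{1}\vartheta_{1,1}(\eta)\max\{\mathrm{e}^{-\mu(a+\lvert\xi+a\rvert)},\mathrm{e}^{-\mu(b+\lvert\xi-b\rvert)}\}\qquad\text{on all of }\mathbb{L},
\end{equation*}
and squaring and integrating in $\xi$ (cf.\ the computation \eqref{eq:109}; the maximum exceeds $\mathrm{e}^{-\mu(\min\{a,b\}+1)}$ on a unit interval adjacent to the shorter end) gives $\lVert Y_{a,b}\rVert_{L^{2}(\mathbb{L})}\geq C\mathrm{e}^{-\mu\min\{a,b\}}$, which is the left inequality of \eqref{eq:44}. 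This is exactly how the paper proceeds; your slab heuristic is the right picture, but the quantitative statement you were missing is precisely this two-sided barrier bound.

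A secondary weakness is your gradient estimate for the right inequality of \eqref{eq:44}: you propose testing $(-\Delta+\lambda)Y_{a,b}=0$ against $Y_{a,b}$, but $Y_{a,b}$ does not vanish on the end disks of $\mathbb{L}_{a,b}$ (and the zero extension of $\widetilde U_{a,b}^{\pm}$ is not a solution across them), so the integration by parts produces boundary terms $\int Y_{a,b}\,\partial_{\nu}Y_{a,b}$ which require gradient bounds for $\widetilde U_{a,b}^{\pm}$ up to the ends, i.e.\ an elliptic regularity input in any case; also, since $\lambda$ may be negative, coercivity must be invoked on $\mathbb{L}$ rather than read off termwise. The paper avoids these issues by deducing $\lVert\nabla Y_{a,b}\rVert_{L^{2}}=O(\mathrm{e}^{-\mu\min\{a,b\}})$ directly from the $L^{2}$ bound through a standard regularity estimate, cf.\ \cite[Theorem~9.12]{MR737190}; you should either do the same or carry out the boundary-term bookkeeping explicitly.
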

\begin{proof}
Note that $U^{\pm}$ and $\widetilde{U}_{a,b}^{\pm}$ are in $C^{2}
(\mathbb{L}_{a,b})\cap C(\overline{\mathbb{L}_{a,b}})$. Set $Y_{a,b}:=U^{\pm
}-\widetilde{U}_{a,b}^{\pm}.$ We claim that the inequalities
\begin{align}
&  C_{1}\vartheta_{1,1}(\eta)\max\{\mathrm{e}^{-\mu(a+\lvert\xi+a\rvert
)},\mathrm{e}^{-\mu(b+\lvert\xi-b\rvert)}\}\leq Y_{a,b}\label{Y}\\
&  \qquad\qquad\leq C_{2}\vartheta_{1,1}(\eta)(\mathrm{e}^{-\mu(a+\lvert
\xi+a\rvert)}+\mathrm{e}^{-\mu(b+\lvert\xi-b\rvert)})\nonumber
\end{align}
hold true for all $(\xi,\eta)\in\mathbb{L}$, where $C_{1}$ and $C_{2}$ are the
constants in Lemma \ref{decayU}. This is trivially true in $\mathbb{L}
\smallsetminus\mathbb{L}_{a,b}.$ For $(\xi,\eta)\in\mathbb{L}_{a,b}$ it
follows from the maximum principle, because the equalities
\begin{align*}
(-\Delta+\lambda)\vartheta_{1,1}(\eta)\mathrm{e}^{-\mu(a+\lvert\xi+a\rvert)}
&  =0,\\
(-\Delta+\lambda)\vartheta_{1,1}(\eta)\mathrm{e}^{-\mu(b+\lvert\xi-b\rvert)}
&  =0,\\
(-\Delta+\lambda)Y_{a,b}  &  =0,
\end{align*}
hold true in $\mathbb{L}_{a,b}$. Inequalities \eqref{eq:108} and \eqref{eq:43}
are now a consequence of \eqref{Y} and the maximum principle.
Next we prove \eqref{eq:44}. A straightforward computation using \eqref{Y}
yields
\begin{equation}
\left\Vert Y_{a,b}\right\Vert _{L^{2}(\mathbb{L})}=O(\mathrm{e}^{-\mu
\min\{a,b\}}) \label{eq:40}
\end{equation}
as $a,b\rightarrow\infty$. A standard regularity argument,
cf. \cite[Theorem~9.12]{MR737190}, yields
\begin{equation*}
\left\Vert \nabla Y_{a,b}\right\Vert _{L^{2}(\mathbb{L})}=O(\mathrm{e}
^{-\mu\min\{a,b\}}).
\end{equation*}
which, together with \eqref{eq:40}, this gives the inequality in the
right-hand side of \eqref{eq:44}. To prove the other inequality it is enough
to show that
\begin{equation}
\left\Vert Y_{a,b}\right\Vert _{L^{2}(\mathbb{L})}\geq C\mathrm{e}^{-\mu
\min\{a,b\}}, \label{eq:41}
\end{equation}
where $C$ is independent of $a$ and $b$. Note that
\begin{equation*}
a+\left\vert \xi+a\right\vert \leq b+\left\vert \xi-b\right\vert \quad\text{if
and only if}\quad\xi\leq b-a.
\end{equation*}
It follows that
\begin{align}
&  \int_{\mathbb{R}}\max\{\mathrm{e}^{-\mu(a+\lvert\xi+a\rvert)}
,\mathrm{e}^{-\mu(b+\lvert\xi-b\rvert)}\}\,\mathrm{d}\xi\label{eq:109}\\
&  =\frac{1}{\mu}(\mathrm{e}^{-2\mu a}-\mathrm{e}^{-2\mu(a+b)}+\mathrm{e}
^{-2\mu b})\nonumber\\
&  \geq\frac{1}{\mu}\max\{\mathrm{e}{^{-2\mu a},\mathrm{e}^{-2\mu b}\}}
=\frac{1}{\mu}\mathrm{e}^{-2\mu\min\{a,b\}},\nonumber
\end{align}
which together with \eqref{Y} yields \eqref{eq:41}. The proof is complete.
\end{proof}
Next we compare $V_{x,R}^{\pm}$ with the function
\begin{equation*}
W_{x,R}^{\pm}(y):=\widetilde{U}_{\lvert x-R\gamma(0)\rvert,\lvert
x-R\gamma(1)\rvert}^{\pm}(A_{x}(y-x)),\qquad y\in\mathbb{R}^{N},
\end{equation*}
with $A_{x}$ as in \eqref{eq:3} and $\widetilde{U}_{a,b}^{\pm}$ as in
\eqref{Utilde}. Thus, the support of $W_{x,R}^{\pm}$ is contained in a copy of
the finite cylinder $\mathbb{L}_{\lvert x-R\gamma(0)\rvert,\lvert
x-R\gamma(1)\rvert},$ obtained by translating $0$ to $x$ and identifying
$\mathbb{R}\times\{0\}$ with the tangent space to $\Gamma_{R}$ at $x.$
\begin{lemma}
\label{lem:w-v-est-open}For $s\in\lbrack1,r_{0})$ and $p\in(0,\infty)$ the
asymptotic estimates
\begin{align}
\int_{\mathbb{R}^{N}}\lvert V_{x,s,R}^{\pm}-W_{x,R}^{\pm}\rvert^{p}
\,\mathrm{d}y  &  =O(R^{-\min\{p,1\}}),\label{eq:36}\\
\int_{\mathbb{R}^{N}}\lvert\nabla V_{x,s,R}^{\pm}-\nabla W_{x,R}^{\pm}
\rvert^{2}\,\mathrm{d}y  &  =O(R^{-1}),\label{eq:37}\\
\int_{\mathbb{R}^{N}}\lvert F(V_{x,s,R}^{\pm})-F(W_{x,R}^{\pm})\rvert
\,\mathrm{d}y  &  =O(R^{-1}),\label{eq:38}\\
\int_{\mathbb{R}^{N}}\lvert f(V_{x,s,R}^{\pm})-f(W_{x,R}^{\pm})\rvert
^{p}\,\mathrm{d}y  &  =O(R^{-\min\{p,1\}}), \label{eq:39}
\end{align}
hold true as $R\rightarrow\infty,$ independently of $x\in\Gamma_{R}.$
\end{lemma}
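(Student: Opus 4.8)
The plan is to repeat the proof of Lemma~\ref{lem:u-v-est} almost verbatim, with the finite–cylinder model $W_{x,R}^{\pm}$ in place of the infinite–cylinder solution $U_{x,R}^{\pm}$, the only new ingredient being the geometry near the two ends of the tube. Two preliminary facts are needed. First, the decay and regularity estimates of Lemma~\ref{lem:decay-V} remain valid here for $V_{x,s,R}^{\pm}$, by the same barrier argument: $e^{-\nu\lvert\xi\rvert}\vartheta_{1,r_{0}}(\eta)$ (with $\nu>0$ small) is a positive supersolution of $-\Delta+\lambda$ on all of $\Omega_{R}\cap\mathbb{L}_{s,x}$, including near the flat end caps, since $\vartheta_{1,r_{0}}(\eta)>0$ for $\lvert\eta\rvert\le1<r_{0}$; hence the maximum principle gives $\lvert V_{x,s,R}^{\pm}(y)\rvert\le Ce^{-c\lvert y-x\rvert}$ uniformly, and likewise for $\lvert\nabla V_{x,s,R}^{\pm}\rvert$ and for $\lvert D^{2}V_{x,s,R}^{\pm}\rvert$ away from the codimension–two edges of $\Omega_{R}$, by standard regularity. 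Second, the same bounds hold for $W_{x,R}^{\pm}$: by Lemma~\ref{lem:tu-u-est-open}, $\lvert W_{x,R}^{\pm}\rvert\le\lvert U_{x,R}^{\pm}\rvert\le Ce^{-c\lvert y-x\rvert}$, and the derivative bounds follow from interior and boundary regularity theory. Consequently the integrals in \eqref{eq:36}--\eqref{eq:39} over $\mathbb{R}^{N}\smallsetminus Q_{R}$, where $Q_{R}:=(-R^{1/4},R^{1/4})\times B_{s}^{N-1}$ in coordinates straightening $\Gamma_{R}$ at $x$, are $O(e^{-cR^{1/4}})$, and it remains to estimate the integrals over $Q_{R}$.

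Straightening $\Gamma_{R}$ at $x$ as in the proof of Lemma~\ref{lem:u-v-est}, the inclusions \eqref{eq:55}--\eqref{eq:56} show that inside $Q_{R}$ the lateral wall of $\Omega_{R}\cap\mathbb{L}_{s,x}$ lies at distance $O((1+\lvert\xi\rvert^{2})/R)$ from that of the finite cylinder $\mathbb{L}_{a,b,x}:=\{x+A_{x}^{-1}(z):z\in\mathbb{L}_{a,b}\}$, with $a:=\lvert x-R\gamma(0)\rvert$ and $b:=\lvert x-R\gamma(1)\rvert$, exactly as in Claims \ref{c2}--\ref{c4}; note $\mathbb{L}_{a,b,x}$ (which contains the support of $W_{x,R}^{\pm}$) has lateral radius one, so intersecting with $\mathbb{L}_{s,x}$ is immaterial. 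In addition, the flat end face $\{-a\}\times B_{1}^{N-1}$ of $\mathbb{L}_{a,b,x}$ is close to the end cap of $\Omega_{R}$ at $R\gamma(0)$ (the unit disk orthogonal to $\dot\gamma(0)$ centred at $R\gamma(0)$): writing $\ell$ for the arc length along $\Gamma_{R}$ from $x$ to $R\gamma(0)$, the $C^{3}$–regularity of $\gamma$ and the bound $\lvert\kappa_{\Gamma_{R}}\rvert=O(1/R)$ give $\ell-a=O(\ell^{3}/R^{2})$, the cap's centre at $\xi=-\ell+O(\ell^{3}/R^{2})$, $\lvert\eta\rvert=O(\ell^{2}/R)$, and a tilt of $O(\ell/R)$ relative to the $\xi$–axis, so that the symmetric difference of $\Omega_{R}\cap\mathbb{L}_{s,x}$ and $\mathbb{L}_{a,b,x}$ near this end is contained in $\{\lvert\xi\rvert\ge\ell/2\}$ and has volume $O((1+\ell^{2})/R)$, and similarly at the other end; this end–cap discrepancy is relevant only when $\ell\le R^{1/4}$, and since $a+b\ge R\lvert\gamma(0)-\gamma(1)\rvert\to\infty$ at most one end is this close. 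We then decompose $Q_{R}$ as in \eqref{eq:63}, with $\mathbb{L}$ replaced by $\mathbb{L}_{a,b,x}$, into the two mismatch sets $Q_{R}\cap(\Omega_{R}\cap\mathbb{L}_{s,x})\smallsetminus\mathbb{L}_{a,b,x}$ and $Q_{R}\cap\mathbb{L}_{a,b,x}\smallsetminus(\Omega_{R}\cap\mathbb{L}_{s,x})$, and the set $D_{R}:=Q_{R}\cap\Omega_{R}\cap\mathbb{L}_{s,x}\cap\mathbb{L}_{a,b,x}$.

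On the first mismatch set $Y:=V_{x,s,R}^{\pm}-W_{x,R}^{\pm}$ equals $V_{x,s,R}^{\pm}$, on the second it equals $-W_{x,R}^{\pm}$; near the lateral walls this is $O((1+\lvert\xi\rvert^{2})/R)\,e^{-c\lvert\xi\rvert}$, because the vanishing function is at distance $O((1+\lvert\xi\rvert^{2})/R)$ from its zero set (Lemma~\ref{lem:est-intersection-balls} and \eqref{eq:55}--\eqref{eq:56}) and has uniformly bounded gradient, exactly as in Claims \ref{c2}--\ref{c4}, contributing $O(R^{-1})$ to the $L^{1}$–norm and $O(R^{-p})$ locally to the $L^{p}$–norm; near an end the mismatch set has volume $O((1+\ell^{2})/R)$ and lies in $\{\lvert\xi\rvert\ge\ell/2\}$, where both functions are $\le Ce^{-c\lvert\xi\rvert}\le Ce^{-c\ell/2}$, so its contribution is $O((1+\ell^{2})e^{-pc\ell/2}/R)=O(R^{-1})$. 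On $D_{R}$ the difference $Y$ solves $(-\Delta+\lambda)Y=f(U_{x,R}^{\pm})-f(U_{x,R}^{\pm})=0$, and on $\partial D_{R}$ the preceding estimates, together with the vanishing of $V_{x,s,R}^{\pm}$ on $\partial\Omega_{R}$ and of $W_{x,R}^{\pm}$ on $\partial\mathbb{L}_{a,b,x}$, give $\lvert Y\rvert\le(C/R)e^{-\nu\lvert\xi\rvert}\vartheta_{1,r_{0}}(\eta)$; the maximum principle propagates this bound to all of $D_{R}$, whence $\int_{D_{R}}\lvert Y\rvert^{p}=O(R^{-p})$. Adding the pieces yields \eqref{eq:36} with exponent $-\min\{p,1\}$ (interior piece $O(R^{-p})$, mismatch pieces $O(R^{-1})$). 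Estimate \eqref{eq:37} follows as in Claims \ref{c5}--\ref{c6}: on the mismatch sets one uses bounded gradients and the volume bounds above, and on $D_{R}$ the Gauss--Green identity \eqref{eq:105} for $Y$ together with $\lvert Y\rvert\le(C/R)e^{-\nu\lvert\xi\rvert}\vartheta_{1,r_{0}}$ and uniformly bounded $\nabla Y$. Finally \eqref{eq:38} and \eqref{eq:39} follow from \eqref{eq:36}, since $V_{x,s,R}^{\pm}$ and $W_{x,R}^{\pm}$ are uniformly bounded and $F,f\in C^{1}$, as in Claim \ref{c7}. All bounds are uniform in $x\in\Gamma_{R}$ because the underlying barrier, decay, regularity and geometric constants are.

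The only \emph{genuinely new} difficulty compared with Lemma~\ref{lem:u-v-est} is the behaviour near an end of the tube: one must control, uniformly in $x$ (including when $a$ or $b$ is small, so that $W_{x,R}^{\pm}$ is itself small), the distance between the end cap of $\Omega_{R}$ and the flat face of the model cylinder — this is where the $C^{3}$–regularity of $\gamma$, through the chord–versus–arc–length estimate and $\lvert\kappa_{\Gamma_{R}}\rvert=O(1/R)$, enters — and one must check that the corner singularities where the caps meet the lateral walls do no harm; they do not, because the regions concerned have volume $O((1+\ell^{2})/R)$ and sit where $e^{-c\lvert\xi\rvert}$ is of order $e^{-c\ell}$, so that the crude bounds "$\lvert Y\rvert$ and $\lvert\nabla Y\rvert$ bounded" suffice there, and one must verify that the decay barrier $e^{-\nu\lvert\xi\rvert}\vartheta_{1,r_{0}}(\eta)$ for $V_{x,s,R}^{\pm}$ remains a valid supersolution near those caps. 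Everything else is a line-by-line repetition of the proof of Lemma~\ref{lem:u-v-est}.
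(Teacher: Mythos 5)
Your argument is correct and follows essentially the same route as the paper: decompose $\mathbb{R}^{N}$ into a far region, the lateral and end-cap mismatch sets (controlled by the exponential decay bounds together with the $O(b_{R}/R)$ offset/tilt of the slab between the end cap of $\Omega_{R}$ and the flat face of the model cylinder), and the common region, where the difference solves the homogeneous equation and is handled by boundary estimates, the maximum principle and the Gauss--Green identity, with \eqref{eq:38}--\eqref{eq:39} deduced from \eqref{eq:36}. The only organizational difference is that the paper distinguishes the cases $\mathrm{dist}(x,\partial\Gamma_{R})\gtrless 2R^{1/4}$, enlarges the box to $(-R^{1/3},R^{1/3})\times B_{s}^{N-1}$ near an end, and extends $\gamma$ past its endpoints to keep a graph description of the tube, whereas you work directly with the end-cap position and tilt; both give the same estimates.
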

\begin{proof}
Let $x_{R}\in\Gamma_{R}.$ If $x_{R}$ is far from the boundary the proof is
similar to that of Lemma~\ref{lem:u-v-est}, but if $x_{R}$ is close to the
boundary the proof requires some new geometric considerations. More precisely,
we consider two cases:
a) $\left\vert x_{R}-R\gamma(0)\right\vert \geq2R^{1/4}$ and $\left\vert
x_{R}-R\gamma(1)\right\vert \geq2R^{1/4}.$ Then the proof is the same as that
of Lemma~\ref{lem:u-v-est}.
b) Either $\left\vert x_{R}-R\gamma(0)\right\vert <2R^{1/4}$ or $\left\vert
x_{R}-R\gamma(1)\right\vert <2R^{1/4}$. Since both cases are similar, we only
consider the case
\begin{equation}
b_{R}:=\left\vert x_{R}-R\gamma(1)\right\vert <2R^{1/4}. \label{eq:65}
\end{equation}
For each $R$ we fix a coordinate system by identifying $x_{R}$ with $0$ and
the tangent space to $\Gamma_{R}$ at $x_{R}$ with $\mathbb{R}\times\{0\},$
preserving the orientation. In this coordinate system we consider the infinite
cylinder $\mathbb{L}$ and the finite cylinders
\begin{equation*}
\begin{array}
[c]{l}
\mathbb{L}_{R}:=\mathbb{L}_{\lvert x_{R}-R\gamma(0)\rvert,\lvert x_{R}
-R\gamma(1)\rvert},\medskip\\
Q_{R}:=(-R^{1/3},R^{1/3})\times B_{s}^{N-1}(0),
\end{array}
\end{equation*}
and we write $R\gamma(1)=(\xi_{R},\eta_{R}).$ Note that, since $\frac{x_{R}
}{R}\rightarrow\gamma(1)$ as $R\rightarrow\infty,$ the end of $\Omega_{R}$
which contains $R\gamma(0)$ lies outside of $Q_{R}$ for $R$ large enough.
We may assume that $\gamma$ is defined in some interval $(0,1+\varepsilon), $
$\varepsilon>0,$ and write $\widetilde{\Gamma}_{R}:=\{R\gamma(t)\mid
t\in\lbrack0,1+\varepsilon)\}$ and $\widetilde{\Omega}_{R}$ for its tubular
neighborhood of radius $1.$ Then $\widetilde{\Gamma}_{R}\cap Q_{R}$ is
contained in the graph of a $C^{3}$-function $h_{R}:(-R^{1/3},R^{1/3}
)\rightarrow\mathbb{R}^{N-1}$ for large $R$. As before, inequalities
\eqref{eq:55}\ hold for $h_{R}.$ Since $0\leq\xi_{R}\leq b_{R}$ and $b_{R}
^{2}-\xi_{R}^{2}=\eta_{R}^{2}=h_{R}(\xi_{R})^{2}$ we obtain
\begin{equation}
\left\vert b_{R}-\xi_{R}\right\vert =\frac{h_{R}(\xi_{R})^{2}}{\left\vert
b_{R}+\xi_{R}\right\vert }\leq\frac{C\xi_{R}}{2R^{2}}\leq C\frac{b_{R}}{R^{2}
}. \label{eq:73}
\end{equation}
Next, we express $\mathbb{R}^{N}$ as the union of the sets
\begin{equation}
\begin{array}
[c]{l}
D_{R}^{1}:=\mathbb{R}^{N}\smallsetminus Q_{R},\smallskip\\
D_{R}^{2}:=Q_{R}\cap\left[  \left(  \Omega_{R}\cup\mathbb{L}_{R}\right)
\smallsetminus(\widetilde{\Omega}_{R}\cap\mathbb{L})\right]  ,\smallskip\\
D_{R}^{3}:=Q_{R}\cap\left[  (\Omega_{R}\cap\left(  \mathbb{L}\smallsetminus
\mathbb{L}_{R}\right)  )\cup((\widetilde{\Omega}_{R}\smallsetminus\Omega
_{R})\cap\mathbb{L})\right]  ,\smallskip\\
D_{R}^{4}:=Q_{R}\cap\mathbb{L}_{R}\cap\Omega_{R},
\end{array}
\label{partition}
\end{equation}
and we show that the estimate \eqref{eq:36} holds true for the integral over
each one of these sets. Note that $D_{R}^{2}\subset Q_{R}\cap\lbrack
(\widetilde{\Omega}_{R}\cup\mathbb{L})\smallsetminus(\widetilde{\Omega}
_{R}\cap\mathbb{L})].$ Thus, the arguments for $D_{R}^{1}$ and $D_{R}^{2}$ are
the same as those\ given to prove Claims 1-3 in Lemma \ref{lem:u-v-est}. To
prove estimate \eqref{eq:36} over $D_{R}^{3},$ first observe that the angle
$\alpha_{R}$ between $\{b_{R}\}\times\mathbb{R}^{N-1}$ and the end of
$\Omega_{R}$ which contains $R\gamma(1)$ is the same as the angle between the
tangent space to $\Gamma_{R}$ at $x_{R},$ which we have identified with
$\mathbb{R}\times\{0\},$ and the tangent space to $\widetilde{\Gamma}_{R}$ at
$R\gamma(1).$ Therefore, using \eqref{eq:55} we obtain that
\begin{equation}
\tan\alpha_{R}=\left\vert h_{R}^{\prime}(\xi_{R})\right\vert \leq C\frac
{b_{R}}{R}. \label{eq:81}
\end{equation}
Since diam$(B_{1}^{N-1})=2$ it follows that
\begin{align}
D_{R}^{3}  &  \subset[\xi_{R}-2\tan\alpha_{R},\,b_{R}+2\tan\alpha_{R}]\times
B_{1}^{N-1}\label{eq:79}\\
&  \subset[b_{R}-s_{R},b_{R}+s_{R}]\times B_{1}^{N-1},\nonumber
\end{align}
where $s_{R}\geq0$ satisfies
\begin{equation}
s_{R}\leq C\left(  \frac{b_{R}}{R^{2}}+\frac{b_{R}}{R}\right)  \leq
C\frac{b_{R}}{R}. \label{eq:80}
\end{equation}
Here we have used \eqref{eq:73} and \eqref{eq:81}. Therefore, using Lemma
\ref{lem:decay-V}\ we conclude that
\begin{align}
\int_{D_{R}^{3}}\lvert V_{x,s,R}^{\pm}-W_{x,R}^{\pm}\rvert^{p}\,\mathrm{d}y
&  \leq C\int_{b_{R}-s_{R}}^{b_{R}+s_{R}}\mathrm{e}^{-pC_{4}\xi}
\,\mathrm{d}\xi\label{eq:83}\\
&  =C\mathrm{e}^{-pC_{4}b_{R}}\sinh(pC_{4}s_{R})\nonumber\\
&  \leq C\mathrm{e}^{-pC_{4}b_{R}}\frac{b_{R}}{R}=O(R^{-1}).\nonumber
\end{align}
for $R$ large enough. To prove estimate \eqref{eq:36} over $D_{R}^{4},$ we
start by estimating $\lvert V_{x,s,R}^{\pm}-W_{x,R}^{\pm}\rvert$ on $\partial
D_{R}^{4}.$ If $(\xi,\eta)\in\partial D_{R}^{4}\cap\partial\mathbb{L}$ then,
as in \eqref{eq:85}, we have that
\begin{equation*}
\text{dist}((\xi,\eta),\partial\widetilde{\Omega}_{R})\leq C\frac{1+\xi^{2}
}{R}.
\end{equation*}
Similarly, if $(\xi,\eta)\in\partial D_{R}^{4}\cap\partial\widetilde{\Omega
}_{R}$ then
\begin{equation*}
\text{dist}((\xi,\eta),\partial\mathbb{L})\leq C\frac{1+\xi^{2}}{R}.
\end{equation*}
Moreover, if $(\xi,\eta)\in\partial D_{R}^{4}\cap\partial\Omega_{R}
\cap\widetilde{\Omega}_{R}$ then
\begin{equation*}
\text{dist}((\xi,\eta),\partial\mathbb{L}_{R}\cap\mathbb{L})\leq2s_{R}\leq
C\frac{b_{R}}{R}\leq C\frac{\xi+s_{R}}{R}\leq C\left(  \frac{\xi}{R}
+\frac{b_{R}}{R^{2}}\right)  \leq C\frac{1+\xi^{2}}{R}.
\end{equation*}
Similarly, if $(\xi,\eta)\in\partial D_{R}^{4}\cap\partial\mathbb{L}_{R}
\cap\mathbb{L}$ then
\begin{equation*}
\text{dist}((\xi,\eta),\partial\Omega_{R}\cap\widetilde{\Omega}_{R})\leq
C\frac{1+\xi^{2}}{R}.
\end{equation*}
Since $V_{x,s,R}^{\pm}=0$ in $\mathbb{R}^{N}\smallsetminus\Omega_{R}$ and
$W_{x,R}^{\pm}=0$ in $\mathbb{R}^{N}\smallsetminus\mathbb{L}_{R},$ arguing as
in the proof of Claim 4 of Lemma \ref{lem:u-v-est}, we conclude that
\begin{equation}
\lvert V_{x,s,R}^{\pm}-W_{x,R}^{\pm}\rvert\leq C\mathrm{e}^{-C_{4}\left\vert
\xi\right\vert }\frac{1+\xi^{2}}{R}\qquad\text{on $\partial$}D_{R}^{4}\text{,}
\label{eq:84}
\end{equation}
and that
\begin{equation}
\int_{D_{R}^{4}}\lvert V_{x,s,R}^{\pm}-W_{x,R}^{\pm}\rvert^{p}\,\mathrm{d}
y=O(R^{-p}). \label{eq:82}
\end{equation}
This finishes the proof of \eqref{eq:36}.
The proof of \eqref{eq:37} is analogous to that of \eqref{eq:12}, using the
partition \eqref{partition}. Equations \eqref{eq:38} and \eqref{eq:39} follow
from \eqref{eq:36} as in the proof of Lemma~\ref{lem:u-v-est}.
\end{proof}
Again, we consider functions $g_{m}\colon\mathbb{R}^{+}\rightarrow
\mathbb{R}^{+}$ (to be fixed later) satisfying \eqref{eq:24}-\eqref{eq:25},
but this time we define $D_{m,R}$ as the set of points $(x_{1},x_{2}
,\dots,x_{n})$ in $(\Gamma_{R})^{n}$ such that either there exist
$i,j\in\{1,2,\dots,n\}$ with $i\neq j$ and $\lvert x_{i}-x_{j}\rvert\leq
g_{m}(R)$, or there exists $i\in\{1,2,\dots,n\}$ with $2$dist$(x_{i}
,\partial\Gamma_{R})\leq g_{m}(R).$ Then we define
\begin{equation}
\mathcal{U}_{m,R}:=\{(x_{1},x_{2},\dots,x_{n})\in(\Gamma_{R})^{n}
\smallsetminus D_{m,R}\mid(x_{1},x_{2},\dots,x_{n})\text{ is an $n $-chain}\}.
\label{Uopen}
\end{equation}
\begin{lemma}
\label{lem:nearness-w-v-open}The estimates
\begin{align}
\sup_{x\in\Gamma_{R}}\left\Vert V_{x,R}^{\pm}-W_{x,R}^{\pm}\right\Vert
_{H_{0}^{1}(\mathbb{R}^{N})}  &  =O(R^{-1/2}),\label{eq:28}\\
\sup_{x\in\Gamma_{R}}\left\vert J_{\Omega_{R}}(V_{x,R}^{\pm})-J_{\mathbb{L}
}(W_{x,R}^{\pm})\right\vert  &  =O(R^{-1}),\label{eq:45}\\
\sup_{\substack{x\in\Gamma_{R} \\\text{\emph{dist}}(x,\partial\Gamma_{R})\geq
g_{2}(R)/2}}\left\Vert \nabla J_{\Omega_{R}}(V_{x,R}^{\pm})\right\Vert
_{H_{0}^{1}(\Omega_{R})}  &  =O(R^{-1/2})+O(\mathrm{e}^{-\min\{p_{1},2\}\mu
g_{2}(R)/2}) \label{eq:46}
\end{align}
hold true as $R\rightarrow\infty$.
\end{lemma}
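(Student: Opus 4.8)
The plan is to prove the three estimates in order, following the scheme of the closed–tube analog Lemma~\ref{lem:nearness-u-v}, with Lemma~\ref{lem:w-v-est-open} playing the role of Lemma~\ref{lem:u-v-est} and with the boundary effect quantified by Lemma~\ref{lem:tu-u-est-open} entering only in the last and hardest estimate. Estimate \eqref{eq:28} is immediate: taking $p=2$ in \eqref{eq:36} and combining with \eqref{eq:37}, both with $s=1$ (so that $V_{x,1,R}^{\pm}=V_{x,R}^{\pm}$, while $W_{x,R}^{\pm}$ does not depend on $s$), gives $\lVert V_{x,R}^{\pm}-W_{x,R}^{\pm}\rVert_{H^{1}(\mathbb{R}^{N})}^{2}=O(R^{-1})$ uniformly in $x$.

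For \eqref{eq:45} I would cancel the quadratic part of the two energies using the equations they satisfy, exactly as \eqref{eq:17} follows from Lemma~\ref{lem:u-v-est}. Up to the isometry $A_{x}$ of \eqref{eq:3}, $W_{x,R}^{\pm}$ is the solution $\widetilde{U}_{a,b}^{\pm}$ of \eqref{Utilde} with $a:=\lvert x-R\gamma(0)\rvert$ and $b:=\lvert x-R\gamma(1)\rvert$, so it solves $-\Delta W_{x,R}^{\pm}+\lambda W_{x,R}^{\pm}=f(U_{x,R}^{\pm})$ on its support; testing this against $W_{x,R}^{\pm}$ and testing \eqref{V} (with $s=1$) against $V_{x,R}^{\pm}$ yields
\[
J_{\Omega_{R}}(V_{x,R}^{\pm})-J_{\mathbb{L}}(W_{x,R}^{\pm})=\tfrac12\int_{\mathbb{R}^{N}}f(U_{x,R}^{\pm})(V_{x,R}^{\pm}-W_{x,R}^{\pm})-\int_{\mathbb{R}^{N}}\bigl(F(V_{x,R}^{\pm})-F(W_{x,R}^{\pm})\bigr).
\]
Since $f(U_{x,R}^{\pm})$ is bounded in $L^{\infty}$ uniformly in $R$ and $x$, the first term is $O(R^{-1})$ by \eqref{eq:36} with $p=1$ and the second is $O(R^{-1})$ by \eqref{eq:38}.

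The main work is \eqref{eq:46}, and here I would mimic the proof of \eqref{eq:18}. Fix $s\in(1,r_{0})$ and the cut-off $\chi$ used there; for $v\in H_{0}^{1}(\Omega_{R})$ with $\lVert v\rVert_{H_0^1(\Omega_R)}=1$, in coordinates centered at $x$ split $v=v_{1}+v_{2}$, $v_{1}=\chi(\eta)v\in H_{0}^{1}(\Omega_{R}\cap\mathbb{L}_{s,x})$, so that $DJ_{\Omega_{R}}(V_{x,R}^{\pm})v=DJ_{\Omega_{R}}(V_{x,R}^{\pm})v_{1}$ because $V_{x,R}^{\pm}$ is supported in $\mathbb{L}_{x}$ while $v_{2}$ vanishes there. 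Replacing $V_{x,R}^{\pm}$ by $V_{x,s,R}^{\pm}$ costs only $O(R^{-1/2})\lVert v\rVert$, since by Lemma~\ref{lem:w-v-est-open} and the triangle inequality $\lVert V_{x,R}^{\pm}-V_{x,s,R}^{\pm}\rVert_{H^{1}}\le\lVert V_{x,R}^{\pm}-W_{x,R}^{\pm}\rVert_{H^{1}}+\lVert W_{x,R}^{\pm}-V_{x,s,R}^{\pm}\rVert_{H^{1}}=O(R^{-1/2})$, the nonlinear part being absorbed via \eqref{eq:39}. By the defining equation \eqref{V} one has $DJ_{\Omega_{R}}(V_{x,s,R}^{\pm})v_{1}=\int_{\mathbb{R}^{N}}\bigl(f(U_{x,R}^{\pm})-f(V_{x,s,R}^{\pm})\bigr)v_{1}$; inserting $W_{x,R}^{\pm}$ and bounding $\lVert f(W_{x,R}^{\pm})-f(V_{x,s,R}^{\pm})\rVert_{L^{2}}=O(R^{-1/2})$ by \eqref{eq:39} with $p=2$, the whole estimate reduces to controlling $\lVert f(U_{x,R}^{\pm})-f(W_{x,R}^{\pm})\rVert_{L^{2}(\mathbb{R}^{N})}$, which by the isometry $A_{x}$ equals $\lVert f(U^{\pm})-f(\widetilde{U}_{a,b}^{\pm})\rVert_{L^{2}(\mathbb{L})}$, with $\min\{a,b\}\ge g_{2}(R)/2$ thanks to the hypothesis $\mathrm{dist}(x,\partial\Gamma_{R})\ge g_{2}(R)/2$.

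The crux is therefore the bound $\lVert f(U^{\pm})-f(\widetilde{U}_{a,b}^{\pm})\rVert_{L^{2}(\mathbb{L})}=O(\mathrm{e}^{-\min\{p_{1},2\}\mu\min\{a,b\}})$, and the point is the exponent $\min\{p_{1},2\}$: merely using that $f$ is Lipschitz on bounded sets, together with \eqref{eq:44}, would give exponent $1$, which does not suffice. Instead I would exploit the superlinearity of $f$ at the origin — from \eqref{eq:29}, $\lvert f(t)-f(t')\rvert\le C(\lvert t\rvert^{p_{1}-1}+\lvert t'\rvert^{p_{1}-1})\lvert t-t'\rvert$ for $t,t'$ bounded — and combine it with \eqref{eq:108}, the pointwise estimate \eqref{eq:43}, and the decay bound of Lemma~\ref{decayU} to obtain
\[
\lvert f(U^{\pm})(\xi,\eta)-f(\widetilde{U}_{a,b}^{\pm})(\xi,\eta)\rvert\le C\,\mathrm{e}^{-(p_{1}-1)\mu\lvert\xi\rvert}\vartheta_{1,1}(\eta)^{p_{1}}\bigl(\mathrm{e}^{-\mu(a+\lvert\xi+a\rvert)}+\mathrm{e}^{-\mu(b+\lvert\xi-b\rvert)}\bigr).
\]
Squaring, the $\eta$-integral of $\vartheta_{1,1}^{2p_{1}}$ is finite and the remaining one-dimensional integral $\int_{\mathbb{R}}\mathrm{e}^{-2(p_{1}-1)\mu\lvert\xi\rvert}\mathrm{e}^{-2\mu(a+\lvert\xi+a\rvert)}\,\mathrm{d}\xi$ is elementary: its mass concentrates near $\xi=-a$, where $U^{\pm}$ has size $\mathrm{e}^{-\mu a}$, when $p_{1}<2$ (order $\mathrm{e}^{-2p_{1}\mu a}$), and near $\xi=0$, the waist of the cylinder, when $p_{1}\ge 2$ (order $\mathrm{e}^{-4\mu a}$), so in both cases its square root is $O(\mathrm{e}^{-\min\{p_{1},2\}\mu a})$, and symmetrically for the $b$-term. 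Substituting $\min\{a,b\}\ge g_{2}(R)/2$ and collecting all contributions gives $\sup_{\lVert v\rVert=1}\lvert DJ_{\Omega_{R}}(V_{x,R}^{\pm})v\rvert=O(R^{-1/2})+O(\mathrm{e}^{-\min\{p_{1},2\}\mu g_{2}(R)/2})$, uniformly over the admissible $x$, which is \eqref{eq:46}. I expect this last one-dimensional estimate to be where the care is needed: keeping the exponent at $\min\{p_{1},2\}$ forces one to use simultaneously the decay of $U^{\pm}-\widetilde{U}_{a,b}^{\pm}$ at the ends of the cylinder and the vanishing of $f'$ there, and to balance the two regimes.
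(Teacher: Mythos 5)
Your proposal is correct and follows essentially the same route as the paper: \eqref{eq:28} and \eqref{eq:45} from Lemma~\ref{lem:w-v-est-open}, and \eqref{eq:46} by repeating the cut-off argument of \eqref{eq:18} after bounding $\lVert f(U^{\pm})-f(\widetilde{U}_{a,b}^{\pm})\rVert_{L^{2}(\mathbb{L})}$ with the exponent $\min\{p_{1},2\}$ via (H3), \eqref{eq:108}, \eqref{eq:43} and Lemma~\ref{decayU}, exactly as in the paper (your pointwise bound $\lvert f(t)-f(t')\rvert\leq C(\lvert t\rvert^{p_{1}-1}+\lvert t'\rvert^{p_{1}-1})\lvert t-t'\rvert$ is the same device as the paper's integral of $f'$ along the segment, and your explicit energy identity for \eqref{eq:45} is just a slightly more detailed version of what the paper leaves implicit).
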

\begin{proof}
Estimates \eqref{eq:28} and \eqref{eq:45} follow immediately from
Lemma~\ref{lem:w-v-est-open}. To prove \eqref{eq:46} we first observe that
$|tU^{\pm}+(1-t)\widetilde{U}_{a,b}^{\pm}|\leq\left\vert U^{\pm}\right\vert $
for every $t\in\lbrack0,1]$ by \eqref{eq:108}. Moreover, (H3) implies that
$\left\vert f^{\prime}(u)\right\vert \leq C\left\vert u\right\vert ^{p_{1}-1}
$ for some constant $C$ which depends only on an upper bound for $\left\vert
u\right\vert $. Therefore Lemma~\ref{decayU} and inequality \eqref{eq:43}
imply
\begin{align*}
\int_{\mathbb{L}}|f(U^{\pm})-f(\widetilde{U}_{a,b}^{\pm})|^{2}\,\mathrm{d}x
&  \leq\int_{\mathbb{L}}\left(  \int_{0}^{1}|f^{\prime}(tU^{\pm}
)+(1-t)\widetilde{U}_{a,b}^{\pm}|\,\mathrm{d}t\right)  ^{2}|U^{\pm}
-\widetilde{U}_{a,b}^{\pm}|^{2}\,\mathrm{d}x\\
&  \leq C\int_{\mathbb{R}}\mathrm{e}^{-2(p_{1}-1)\mu}\mathrm{e}^{-2\mu
(a+\lvert\xi+a\rvert)}+\mathrm{e}^{-2\mu(b+\lvert\xi-b\rvert)}\,\mathrm{d}
\xi\\
&  =O(\mathrm{e}^{-2\min\{p_{1},2\}\mu\min\{a,b\}}),
\end{align*}
as $a,b\rightarrow\infty$. Therefore,
\begin{align*}
\left\Vert f(U_{x,R}^{\pm})-f(V_{x,R}^{\pm})\right\Vert _{L^{2}}  &
\leq\left\Vert f(U_{x,R}^{\pm})-f(W_{x,R}^{\pm})\right\Vert _{L^{2}
}+\left\Vert f(W_{x,R}^{\pm})-f(V_{x,R}^{\pm})\right\Vert _{L^{2}}\\
&  \leq O(\mathrm{e}^{-\min\{p_{1},2\}\mu g_{2}(R)/2})+O(R^{-1/2}),
\end{align*}
as $R\rightarrow\infty$. Arguing as in the proof of \eqref{eq:18},\ using this
estimate, we obtain \eqref{eq:46}.
\end{proof}
Define $\varphi_{R}\colon\mathcal{U}_{2,R}\rightarrow H_{0}^{1}(\Omega_{R})$
by
\begin{equation}
\varphi_{R}(X):=
{\sum_{i=1}^{k}}
(V_{x_{2i-1},R}^{+}+V_{x_{2i},R}^{-})+(n-2k)V_{x_{n},R}^{+},\qquad
X=(x_{1},x_{2},\dots,x_{n}), \label{phiopen}
\end{equation}
where $k$ is the largest integer smaller than or equal to $\frac{n}{2}.$ This
time we do not require that $n$ is even.
Next we show that the statements of Propositions
\ref{prop:lyapunov-gradient-estimate}--\ref{prop:jr-low-in-interior} are also
true for these new data. We set $\overline{U}_{i}$ and $\overline{V}_{i}$ as
in (\ref{bar}). Similarly, we set
\begin{equation*}
\overline{W}_{i}:=\left\{
\begin{array}
[c]{ll}
W_{x_{i},R}^{+} & \text{if }i\text{ is odd,}\\
W_{x_{i},R}^{-} & \text{if }i\text{ is even.}
\end{array}
\right.
\end{equation*}
\begin{proposition}
\label{prop:lyapunov-gradient-estimate-open}Let $\alpha$ be as in
\emph{Lemma~\ref{lem:splitting-f}} and fix $\alpha^{\prime}\in(1/2,\min
\{\alpha,p_{1}/2,1\})$. Then
\begin{equation*}
\sup_{X\in\mathcal{U}_{2,R}}\lVert\nabla J_{\Omega_{R}}(\varphi_{R}
(X))\rVert_{H_{0}^{1}(\Omega_{R})}=O(\mathrm{e}^{-\alpha^{\prime}\mu g_{2}
(R)})+O(R^{-1/2})
\end{equation*}
as $R\rightarrow\infty$.
\end{proposition}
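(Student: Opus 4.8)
The plan is to mimic the proof of Proposition~\ref{prop:lyapunov-gradient-estimate} almost verbatim, with $W_{x,R}^{\pm}$ taking over the role of $U_{x,R}^{\pm}$, and with Lemmas~\ref{lem:w-v-est-open} and \ref{lem:nearness-w-v-open} replacing Lemmas~\ref{lem:u-v-est} and \ref{lem:nearness-u-v}. I would fix $X=(x_{1},\dots,x_{n})\in\mathcal{U}_{2,R}$ and note first that, with the notation $\overline{V}_{i}$ of \eqref{bar}, one has $\varphi_{R}(X)=\sum_{i=1}^{n}\overline{V}_{i}$ regardless of the parity of $n$: when $n$ is odd the extra summand $(n-2k)V_{x_{n},R}^{+}$ in \eqref{phiopen} equals $\overline{V}_{n}$ (then $n$ is odd and $n-2k=1$), and when $n$ is even it is absent, so both parities are handled at once. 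For $v\in H_{0}^{1}(\Omega_{R})$ with $\lVert v\rVert_{H_{0}^{1}(\Omega_{R})}=1$ I would split
\[
DJ_{\Omega_{R}}(\varphi_{R}(X))[v]=\sum_{i=1}^{n}DJ_{\Omega_{R}}(\overline{V}_{i})[v]+\int_{\Omega_{R}}\Bigl(\sum_{i=1}^{n}f(\overline{V}_{i})-f\Bigl(\sum_{i=1}^{n}\overline{V}_{i}\Bigr)\Bigr)v ,
\]
so that $\lvert DJ_{\Omega_{R}}(\varphi_{R}(X))[v]\rvert$ is at most $\sum_{i=1}^{n}\lVert\nabla J_{\Omega_{R}}(\overline{V}_{i})\rVert_{H_{0}^{1}(\Omega_{R})}$ plus the $L^{2}(\Omega_{R})$-norm of $\sum_{i}f(\overline{V}_{i})-f(\sum_{i}\overline{V}_{i})$.

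For the first sum I would use that, by the definition of $D_{2,R}$ entering \eqref{Uopen}, every point $x_{i}$ of a chain in $\mathcal{U}_{2,R}$ satisfies $2\,\mathrm{dist}(x_{i},\partial\Gamma_{R})>g_{2}(R)$, so estimate~\eqref{eq:46} of Lemma~\ref{lem:nearness-w-v-open} applies and bounds it by $O(R^{-1/2})+O(\mathrm{e}^{-\min\{p_{1},2\}\mu g_{2}(R)/2})$. For the interaction term I would invoke \eqref{eq:21} of Lemma~\ref{lem:splitting-f} to get the pointwise inequality $\lvert\sum_{i}f(\overline{V}_{i})-f(\sum_{i}\overline{V}_{i})\rvert\le\widetilde{C}_{2}\sum_{i<j}\lvert\overline{V}_{i}\overline{V}_{j}\rvert^{\alpha}$, so that its $L^{2}$-norm is at most $\widetilde{C}_{2}\sum_{i<j}(\int_{\Omega_{R}}\lvert\overline{V}_{i}\overline{V}_{j}\rvert^{2\alpha})^{1/2}$. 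Since the $\overline{V}_{i}$ and $\overline{W}_{i}$ are uniformly bounded and $2\alpha\ge1$, estimate~\eqref{eq:36} replaces $\overline{V}_{i}$ by $\overline{W}_{i}$ at the cost of $O(R^{-1})$ inside the integral; \eqref{eq:108} gives $\lvert\overline{W}_{i}\rvert\le\lvert\overline{U}_{i}\rvert$ pointwise, and Lemma~\ref{decayU} (with $\vartheta_{1,1}$ bounded on $\overline{B_{1}^{N-1}}$) gives $\lvert\overline{U}_{i}(y)\rvert\le C\mathrm{e}^{-\mu\lvert y-x_{i}\rvert}$. Then Lemma~\ref{lem:interaction-exponential}, applied with $\mu_{1}=\mu_{2}=2\alpha\mu$ and $\bar{\mu}=2\alpha'\mu<2\alpha\mu$ (legitimate since $\alpha'<\alpha$), together with $\lvert x_{i}-x_{j}\rvert>g_{2}(R)$ on $\mathcal{U}_{2,R}$, yields $\int_{\Omega_{R}}\lvert\overline{U}_{i}\overline{U}_{j}\rvert^{2\alpha}\le C\mathrm{e}^{-2\alpha'\mu g_{2}(R)}$, hence the interaction term is $O(\mathrm{e}^{-\alpha'\mu g_{2}(R)})+O(R^{-1/2})$.

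Combining the two bounds gives $\lvert DJ_{\Omega_{R}}(\varphi_{R}(X))[v]\rvert=O(R^{-1/2})+O(\mathrm{e}^{-\min\{p_{1},2\}\mu g_{2}(R)/2})+O(\mathrm{e}^{-\alpha'\mu g_{2}(R)})$ uniformly in $v$ and in $X$; since $\alpha'<\min\{\alpha,p_{1}/2,1\}$ forces $2\alpha'<\min\{p_{1},2\}$, the middle term is dominated by $O(\mathrm{e}^{-\alpha'\mu g_{2}(R)})$, and taking the supremum over $v$ and over $X$ gives the assertion. I expect the only genuinely new points — hence the main, if modest, obstacle — to be (i) checking that the boundary-distance hypothesis of \eqref{eq:46} holds automatically on $\mathcal{U}_{2,R}$ (it does, by construction) and (ii) verifying that the extra exponential term in \eqref{eq:46}, which has no analogue in the closed-tube estimate \eqref{eq:18}, still fits under the target rate; the latter is precisely what the constraint $\alpha'<p_{1}/2$ imposed in the statement secures. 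Bookkeeping for the odd extra bump adds nothing beyond the observation that it is absorbed into $\overline{V}_{n}$.
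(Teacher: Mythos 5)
Your proposal is correct and follows essentially the same route as the paper, whose proof of Proposition~\ref{prop:lyapunov-gradient-estimate-open} consists precisely of the remark that it is ``completely analogous'' to Proposition~\ref{prop:lyapunov-gradient-estimate} with Lemmas~\ref{lem:nearness-w-v-open} and \ref{lem:w-v-est-open} in place of Lemmas~\ref{lem:nearness-u-v} and \ref{lem:u-v-est}. You moreover correctly pin down the two points the paper leaves implicit: the boundary-distance hypothesis of \eqref{eq:46} holds on $\mathcal{U}_{2,R}$ by the definition of $D_{2,R}$, and the extra term $O(\mathrm{e}^{-\min\{p_{1},2\}\mu g_{2}(R)/2})$ is absorbed into $O(\mathrm{e}^{-\alpha^{\prime}\mu g_{2}(R)})$ exactly because $\alpha^{\prime}<\min\{p_{1}/2,1\}$.
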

\begin{proof}
The proof is completely analogous to that of
Proposition~\ref{prop:lyapunov-gradient-estimate}, using this time Lemmas
\ref{lem:nearness-w-v-open}\ and \ref{lem:w-v-est-open}.
\end{proof}
Set
\begin{equation*}
E_{n}:=k\left[  J_{\mathbb{L}}(U^{+})+J_{\mathbb{L}}(U^{-})\right]
+(n-2k)J_{\mathbb{L}}(U^{+}).
\end{equation*}
\begin{proposition}
\label{prop:jr-high-on-boundary-open}There exists $\beta>0$ such that
\begin{equation*}
\inf_{X\in\partial\mathcal{U}_{1,R}}J_{\Omega_{R}}(\varphi_{R}(X))\geq
E_{n}+\beta\mathrm{e}^{-\mu g_{1}(R)}+o(\mathrm{e}^{-\mu g_{1}(R)}
)+O(R^{-2/3})
\end{equation*}
as $R\rightarrow\infty$.
\end{proposition}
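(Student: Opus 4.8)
The plan is to mimic the proof of Proposition~\ref{prop:jr-high-on-boundary}, the only genuinely new ingredient being a lower bound for the energy of a single bump that sits near an end of the tube, which I derive first. For $x\in\Gamma_{R}$ put $a:=\lvert x-R\gamma(0)\rvert$ and $b:=\lvert x-R\gamma(1)\rvert$, so that $\min\{a,b\}=\mathrm{dist}(x,\partial\Gamma_{R})$. By \eqref{eq:45} and the invariance of the energy under the isometry used to define $W_{x,R}^{\pm}$,
\begin{equation*}
J_{\Omega_{R}}(V_{x,R}^{\pm})=J_{\mathbb{L}_{a,b}}(\widetilde{U}_{a,b}^{\pm})+O(R^{-1}),
\end{equation*}
so it suffices to bound $J_{\mathbb{L}_{a,b}}(\widetilde{U}_{a,b}^{\pm})-J_{\mathbb{L}}(U^{\pm})$ from below. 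Testing the equation satisfied by $U^{\pm}$ with $U^{\pm}$ and equation \eqref{Utilde} with $\widetilde{U}_{a,b}^{\pm}$ respectively, and expanding $F$ by Taylor's formula, one sees that this difference equals $\tfrac12\int_{\mathbb{L}_{a,b}}f(U^{\pm})(U^{\pm}-\widetilde{U}_{a,b}^{\pm})$ up to terms that, by (H3), \eqref{eq:49}, Lemma~\ref{decayU} and \eqref{eq:43}, are $O(\mathrm{e}^{-(p_{1}+1)\mu\min\{a,b\}})=o(\mathrm{e}^{-2\mu\min\{a,b\}})$ since $p_{1}>1$. By \eqref{eq:108} and (H4) the integrand $f(U^{\pm})(U^{\pm}-\widetilde{U}_{a,b}^{\pm})$ is pointwise nonnegative, and on a fixed ball about the origin the lower estimate in \eqref{Y}, together with (H4) and Lemma~\ref{decayU}, gives $f(U^{\pm})(U^{\pm}-\widetilde{U}_{a,b}^{\pm})\geq c\,\mathrm{e}^{-2\mu\min\{a,b\}}$. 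Hence there are $\beta_{0},M_{0}>0$, independent of $x$ and $R$, such that
\begin{equation*}
J_{\Omega_{R}}(V_{x,R}^{\pm})\geq J_{\mathbb{L}}(U^{\pm})+\beta_{0}\,\mathrm{e}^{-2\mu\,\mathrm{dist}(x,\partial\Gamma_{R})}+O(R^{-1})
\end{equation*}
whenever $\mathrm{dist}(x,\partial\Gamma_{R})\geq M_{0}$; in particular this holds for every bump of an $X\in\partial\mathcal{U}_{1,R}$ once $R$ is large, since then $2\,\mathrm{dist}(x_{i},\partial\Gamma_{R})\geq g_{1}(R)\to\infty$.

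Now fix $X=(x_{1},\dots,x_{n})\in\partial\mathcal{U}_{1,R}$. The same manipulations as in the proof of Proposition~\ref{prop:jr-high-on-boundary} — the expansion of $J_{\Omega_{R}}(\varphi_{R}(X))$ into single-bump and interaction parts, integration by parts using \eqref{V}, the splitting Lemma~\ref{lem:splitting-f}, estimate \eqref{eq:36} in place of \eqref{eq:13}, Lemma~\ref{lem:interaction-exponential}, and the analogue of Lemma~\ref{lem:chains} (whose proof uses only the local geometry of the arc $\Gamma$) — yield, uniformly in $X$,
\begin{equation*}
J_{\Omega_{R}}(\varphi_{R}(X))=\sum_{i=1}^{n}J_{\Omega_{R}}(\overline{V}_{i})-\frac12\sum_{i\neq j}\int_{\mathbb{R}^{N}}f(\overline{U}_{i})\overline{U}_{j}+o(\mathrm{e}^{-\mu g_{1}(R)})+O(R^{-2/3}).
\end{equation*}
Consecutive bumps in $\varphi_{R}$ carry opposite signs, so by (H4) the terms with $\lvert i-j\rvert=1$ satisfy $-\frac12\int f(\overline{U}_{i})\overline{U}_{j}\geq0$, whereas those with $d_{n}(i,j)\geq2$ are $o(\mathrm{e}^{-\mu g_{1}(R)})$ by Lemma~\ref{decayU}, Lemma~\ref{lem:interaction-exponential} and the bound $\lvert x_{i}-x_{j}\rvert\geq\tfrac43g_{1}(R)$. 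By the single-bump estimate and the definition of $E_{n}$, $\sum_{i}J_{\Omega_{R}}(\overline{V}_{i})\geq E_{n}+\beta_{0}\sum_{i}\mathrm{e}^{-2\mu\,\mathrm{dist}(x_{i},\partial\Gamma_{R})}+O(R^{-1})\geq E_{n}+O(R^{-1})$. Since $X\in\partial\mathcal{U}_{1,R}$, at least one constraint defining $\mathcal{U}_{1,R}$ is active: either $\lvert x_{i_{0}}-x_{j_{0}}\rvert=g_{1}(R)$ for some pair, necessarily adjacent by the analogue of \eqref{d>2}, so that $-\frac12\int f(\overline{U}_{i_{0}})\overline{U}_{j_{0}}\geq\beta\,\mathrm{e}^{-\mu g_{1}(R)}$ exactly as in Proposition~\ref{prop:jr-high-on-boundary}; or $2\,\mathrm{dist}(x_{i_{0}},\partial\Gamma_{R})=g_{1}(R)$ for some $i_{0}$, so that $\beta_{0}\sum_{i}\mathrm{e}^{-2\mu\,\mathrm{dist}(x_{i},\partial\Gamma_{R})}\geq\beta_{0}\,\mathrm{e}^{-\mu g_{1}(R)}$. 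In either case, after replacing $\beta$ by $\min\{\beta,\beta_{0}\}$ if necessary, the claimed inequality follows.

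The step that requires care, as in the closed case, is the passage from $\varphi_{R}(X)=\sum_{i}\overline{V}_{i}$ to the model functions $\overline{U}_{i}$. Replacing $\overline{V}_{i}$ by $\overline{W}_{i}$ costs only $O(R^{-1})$, since by \eqref{eq:36} the difference is small in $L^{1}$ and, by the proof of Lemma~\ref{lem:w-v-est-open}, super-exponentially small in $R$ away from $x_{i}$. The replacement of $\overline{W}_{j}$ by $\overline{U}_{j}$ is the delicate one, because $\overline{W}_{j}-\overline{U}_{j}$ need not be $o(\mathrm{e}^{-\mu g_{1}(R)})$ when $x_{j}$ is near $\partial\Gamma_{R}$. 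One controls the resulting errors $\int f(\overline{U}_{i})\lvert\overline{W}_{j}-\overline{U}_{j}\rvert$ by using that at most two bumps lie within $2R^{1/4}$ of $\partial\Gamma_{R}$ and each of them satisfies $\mathrm{dist}(x_{j},\partial\Gamma_{R})\geq g_{1}(R)/2$, that by \eqref{eq:43} the difference $\overline{W}_{j}-\overline{U}_{j}$ is $O(\mathrm{e}^{-2\mu\,\mathrm{dist}(x_{j},\partial\Gamma_{R})})$ near $x_{j}$ and is otherwise concentrated near the far end of the corresponding finite cylinder, which lies at distance $\geq g_{1}(R)$ from $x_{i}$, and that $f(\overline{U}_{i})$ decays at the rate $\mu p_{1}>\mu$ away from $x_{i}$; combining these facts with Lemma~\ref{lem:interaction-exponential} shows that each such error is $o(\mathrm{e}^{-\mu g_{1}(R)})+O(R^{-1})$. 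This bookkeeping, rather than any new idea, is the bulk of the work.
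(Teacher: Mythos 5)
Your argument is correct and, in its overall architecture, is the paper's: expand $J_{\Omega_R}(\varphi_R(X))$ into single-bump energies plus pairwise interactions, show adjacent opposite-sign interactions contribute nonnegatively and at least $\beta\mathrm{e}^{-\mu g_1(R)}$ when a distance constraint is active, show an end-repulsion term $\mathrm{e}^{-2\mu\,\mathrm{dist}(x_i,\partial\Gamma_R)}$ when a boundary constraint is active, and absorb all $\overline V\leftrightarrow\overline W\leftrightarrow\overline U$ replacement errors into $o(\mathrm{e}^{-\mu g_1(R)})+O(R^{-2/3})$; the final dichotomy on which constraint of $\partial\mathcal{U}_{1,R}$ is active is exactly the paper's. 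Where you genuinely diverge is the single-bump estimate, the analogue of \eqref{ener}: the paper expands $J_{\mathbb{L}}$ to second order at $U^{\pm}$ and invokes the two-sided $H^1$ bound \eqref{eq:44} (its lower half, \eqref{eq:41}, is proved precisely for this), whereas you compute $J_{\mathbb{L}_{a,b}}(\widetilde U^{\pm}_{a,b})-J_{\mathbb{L}}(U^{\pm})$ directly from the two equations, isolate the main term $\tfrac12\int f(U^{\pm})(U^{\pm}-\widetilde U^{\pm}_{a,b})\geq 0$, and bound it below by $c\,\mathrm{e}^{-2\mu\min\{a,b\}}$ via the pointwise lower estimate in \eqref{Y} on a fixed ball, the remainders being $O(\mathrm{e}^{-(p_1+1)\mu\min\{a,b\}})$ since $\lvert f'(u)\rvert\leq C\lvert u\rvert^{p_1-1}$ on bounded sets and the tail of $J_{\mathbb{L}}(U^{\pm})$ outside $\mathbb{L}_{a,b}$ is of that same higher order. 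This is a legitimate and arguably cleaner route: it trades the paper's use of $J''_{\mathbb{L}}(U^{\pm})$ (where one must see that the quadratic form does not lose coercivity on the specific increment $Y_{a,b}$) for a sign argument via (H4) and \eqref{eq:108}, and it needs only the pointwise bound \eqref{Y}, not the $H^1$ lower bound. Two small corrections to your bookkeeping paragraph: the claim that at most two bumps lie within $2R^{1/4}$ of $\partial\Gamma_R$ is false (mutual separations are only of order $\log R\ll R^{1/4}$), but it is also unnecessary because $n$ is fixed and the per-term bounds suffice; and $\overline W_j-\overline U_j$ also carries mass near the end lying on the same side as $x_i$, which may be only $g_1(R)/2$ from $x_i$ --- there one must use instead that $x_j$ is then at distance at least $\tfrac32 g_1(R)(1+o(1))$ from that end, which is precisely the two-case analysis in the paper's proof of \eqref{eq:112}--\eqref{eq:113}.
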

\begin{proof}
Let $X=(x_{1},x_{2},\dots,x_{n})\in\partial\mathcal{U}_{1,R}.$ The proof is
similar to that of Proposition \ref{prop:jr-high-on-boundary} except that now
we must replace $\overline{U}_{i}$ by $\overline{W}_{i}.$ So, in order to
arrive to the conclusion, we need the following estimates:
\begin{align}
J_{x_{i}+A_{x_{i}}^{-1}\mathbb{L}}(\overline{W}_{i})  &  \geq J_{x_{i}
+A_{x_{i}}^{-1}\mathbb{L}}(\overline{U}_{i})+C\mathrm{e}^{-2\mu\text{dist}
(x_{i},\partial\Gamma_{R})},\label{ener}\\
\int_{\mathbb{R}^{N}}f(\overline{W}_{i})\overline{W}_{j}  &  =\int
_{\mathbb{R}^{N}}f(\overline{U}_{i})\overline{U}_{j}+o(\mathrm{e}^{-\mu
g_{1}(R)}). \label{inter}
\end{align}
Let us prove the first one. After an appropriate change of coordinates
$\overline{U}_{i}$ becomes $U^{\pm}$\ and $\overline{W}_{i}$ becomes
$\widetilde{U}_{a,b}^{\pm}.$ Recall that $J_{\mathbb{L}}^{\prime}(U^{\pm}
)=0$\ and observe that $\left\vert f^{\prime}(U^{\pm}(\xi,\eta))\right\vert
\leq C\mathrm{e}^{-(p_{1}-1)\mu\left\vert \xi\right\vert }$ due to condition
(H3) and Lemma~\ref{decayU}. So using Lemma \ref{lem:tu-u-est-open}\ we obtain
\begin{align*}
J_{\mathbb{L}}(\widetilde{U}_{a,b}^{\pm})  &  =J_{\mathbb{L}}(U^{\pm}
)+\frac{1}{2}J_{\mathbb{L}}^{\prime\prime}(U^{\pm})[\widetilde{U}_{a,b}^{\pm
}-U^{\pm},\widetilde{U}_{a,b}^{\pm}-U^{\pm}]+o(\lVert U^{\pm}-\widetilde
{U}_{a,b}^{\pm}\rVert_{H_{0}^{1}(\mathbb{L})}^{2})\\
&  \geq J_{\mathbb{L}}(U^{\pm})+\frac{1}{2}\lVert U^{\pm}-\widetilde{U}
_{a,b}^{\pm}\rVert_{H_{0}^{1}(\mathbb{L})}^{2}+o(\lVert U^{\pm}-\widetilde
{U}_{a,b}^{\pm}\rVert_{H_{0}^{1}(\mathbb{L})}^{2})\\
&  \geq J_{\mathbb{L}}(U^{\pm})+C\mathrm{e}^{-2\mu\min\{a,b\}}
\end{align*}
for $R$ large enough. This proves (\ref{ener}).
To prove the second estimate it suffices to show that
\begin{align}
\int_{\mathbb{R}^{N}}({f(\overline{W}_{i})-f(\overline{U}_{i})}\overline
{W}_{j}  &  =o(\mathrm{e}^{-\mu g_{1}(R)})\label{eq:112}\\
\int_{\mathbb{R}^{N}}f(\overline{U}_{i})(\overline{W}_{j}-\overline{U}_{j})
&  =o(\mathrm{e}^{-\mu g_{1}(R)}) \label{eq:113}
\end{align}
as $R\rightarrow\infty$. Since the proof of both estimates is similar, we only
prove \eqref{eq:112}. After a change of coordinates we may assume that
$x_{i}=0$ and that the tangent space to $\Gamma_{R}$ at $x_{i}$ is
$\mathbb{R}\times\{0\}$. Then we set $a:=\left\vert R\gamma(0)\right\vert $
and $b:=\left\vert R\gamma(1)\right\vert .$ We may assume without loss of
generality that $a\leq b.$ Since $\left\vert \overline{W}_{j}(x)\right\vert
\leq C\mathrm{e}^{-\mu\left\vert x-x_{j}\right\vert }$ by \eqref{eq:108} and
Lemma~\ref{decayU}, the proof of \eqref{eq:112}\ reduces to showing that
\begin{equation}
\int_{\mathbb{L}}\left\vert f(U^{\pm}(x))-f(\widetilde{U}_{a,b}^{\pm
}(x))\right\vert \mathrm{e}^{-\mu\left\vert x-x_{j}\right\vert }
\mathrm{d}x=o(\mathrm{e}^{-\mu g_{1}(R)}) \label{eq:115}
\end{equation}
as $R\rightarrow\infty$. We distinguish two cases: If $\left\vert
x_{j}\right\vert \geq2g_{1}(R),$ using condition (H3),
Lemma~\ref{lem:interaction-exponential} and \eqref{eq:10} we obtain
\begin{align*}
\int_{\mathbb{L}}\left\vert f(U^{\pm}(x))-f(\widetilde{U}_{a,b}^{\pm
}(x))\right\vert \mathrm{e}^{-\mu\left\vert x-x_{j}\right\vert }\mathrm{d}x
&  \leq C\int_{\mathbb{L}}\mathrm{e}^{-p_{1}\mu\left\vert x\right\vert
}\mathrm{e}^{-\mu\left\vert x-x_{j}\right\vert }\mathrm{d}x\\
&  \leq C\mathrm{e}^{-\mu\left\vert x_{j}\right\vert }\leq C\mathrm{e}^{-2\mu
g_{1}(R)}=o(\mathrm{e}^{-\mu g_{1}(R)})
\end{align*}
as $R\rightarrow\infty$. On the other hand, if $\left\vert x_{j}\right\vert
\leq2g_{1}(R)$ we write $x_{j}=(\xi_{j},\eta_{j})$ and use the Lipschitz
continuity of $f$ on bounded sets and \eqref{eq:43} to obtain
\begin{align*}
&  \int_{\mathbb{L}}\left\vert f(U^{\pm}(x))-f(\widetilde{U}_{a,b}^{\pm
}(x))\right\vert \mathrm{e}^{-\mu\left\vert x-x_{j}\right\vert }\mathrm{d}x\\
&  \leq C\int_{\mathbb{R}}\left(  \mathrm{e}^{-\mu(a+\lvert\xi+a\rvert
)}+\mathrm{e}^{-\mu(b+\lvert\xi-b\rvert)}\right)  \mathrm{e}^{-\mu\left\vert
\xi-\xi_{j}\right\vert }\mathrm{d}\xi\\
&  \leq C\left(  \mathrm{e}^{-\mu(a+\lvert\xi_{j}+a\rvert)}+\mathrm{e}
^{-\mu(\lvert\xi_{j}-b\rvert)}\right) \\
&  =C\mathrm{e}^{-\mu(a+\lvert\xi_{j}+a\rvert)}+o(\mathrm{e}^{-\mu g_{1}(R)})
\end{align*}
The last equality follows from $\left\vert x_{j}\right\vert \leq2g_{1}(R),$
$b:=\left\vert R\gamma(1)\right\vert $ and \eqref{eq:25}. Now, if $j>i$ we
have that $a\geq\frac{3}{2}g_{1}(R)(1+o(1))$, and if $j<i$ we have that
$\xi_{j}+a\geq\frac{3}{2}g_{1}(R)(1+o(1))$ as $R\rightarrow\infty$. So in both
cases $\mathrm{e}^{-\mu(a+\lvert\xi_{j}+a\rvert)}=o(\mathrm{e}^{-\mu g_{1}
(R)})$. This proves \eqref{eq:115} and, hence, \eqref{eq:112}.
Now we may argue as in Proposition \ref{prop:jr-high-on-boundary}. The
analogue of \eqref{eq:107} with ${\overline{U}_{i}}$ replaced by
${\overline{W}_{i}}$ is obtained in a similar way. Therefore, using estimates
\eqref{ener}, \eqref{inter}\ and \eqref{d>2} we conclude that
\begin{align*}
J_{\Omega_{R}}(\varphi_{R}(X))=  &
{\sum_{i=1}^{n}}
J_{\Omega_{R}}(\overline{V}_{i})+\frac{1}{2}
{\sum_{i\neq j}}
\int_{\Omega_{R}}f(\overline{U}_{i})\overline{V}_{j}\,\mathrm{d}x-
{\sum_{i\neq j}}
\int_{\Omega_{R}}f(\overline{V}_{i})\overline{V}_{j}\,\mathrm{d}x\\
&  +o(\mathrm{e}^{-\mu g_{1}(R)})+O(R^{-2/3})\\
=  &
{\sum_{i=1}^{n}}
J_{x_{i}+A_{x_{i}}^{-1}\mathbb{L}}(\overline{W}_{i})+\frac{1}{2}
{\sum_{i\neq j}}
\int_{\Omega_{R}}f(\overline{U}_{i})\overline{W}_{j}\,\mathrm{d}x-
{\sum_{i\neq j}}
\int_{\Omega_{R}}f(\overline{W}_{i})\overline{W}_{j}\,\mathrm{d}x\\
&  +o(\mathrm{e}^{-\mu g_{1}(R)})+O(R^{-2/3})\\
\geq &  E_{n}+C
{\sum_{i=1}^{n}}
\mathrm{e}^{-2\mu\text{dist}(x_{i},\partial\Gamma_{R})}+\frac{1}{2}
{\sum_{\left\vert i-j\right\vert =1}}
\int_{\Omega_{R}}\left\vert f(\overline{U}_{i})\overline{U}_{j}\right\vert
\,\mathrm{d}x\\
&  +o(\mathrm{e}^{-\mu g_{1}(R)})+O(R^{-2/3}).
\end{align*}
Since $X\in\partial\mathcal{U}_{1,R},$ either dist$(x_{1},\partial\Gamma
_{R})=g_{1}(R)/2$ or dist$(x_{n},\partial\Gamma_{R})=g_{1}(R)/2$ or
$\left\vert x_{i+1}-x_{i}\right\vert =g_{1}(R)$ for some $i=1,...,n-1.$ In any
case, our claim follows.
\end{proof}
\begin{proposition}
\label{prop:jr-low-in-interior-open}The estimate
\begin{equation*}
\inf_{X\in\mathcal{U}_{1,R}}J_{\Omega_{R}}(\varphi_{R}(X))\leq E_{n}
+o(\mathrm{e}^{-\mu g_{1}(R)})+O(R^{-2/3})
\end{equation*}
holds true as $R\rightarrow\infty$.
\end{proposition}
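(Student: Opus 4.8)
The plan is to follow the proof of Proposition~\ref{prop:jr-low-in-interior}, adding only the observation that a chain of points with \emph{fixed} parameters automatically stays far from the two ends of the tube. Concretely, I would fix once and for all $0<t_{1}<t_{2}<\dots<t_{n}<1$ and set $x_{R,i}:=R\gamma(t_{i})$ and $X_{R}:=(x_{R,1},x_{R,2},\dots,x_{R,n})$. Since $\gamma$ is injective on $[0,1]$ except possibly at the endpoints, which here are distinct, the numbers $\delta:=\min_{i\neq j}\lvert\gamma(t_{i})-\gamma(t_{j})\rvert$ and $c:=\min_{i}\min\{\lvert\gamma(t_{i})-\gamma(0)\rvert,\lvert\gamma(t_{i})-\gamma(1)\rvert\}$ are strictly positive, so $\lvert x_{R,i}-x_{R,j}\rvert\geq\delta R$ for $i\neq j$ and $\text{dist}(x_{R,i},\partial\Gamma_{R})\geq cR$ for every $i$. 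By \eqref{eq:25} we have $g_{1}(R)=o(R)$, hence $X_{R}\in\mathcal{U}_{1,R}$ for $R$ large, and it suffices to bound $J_{\Omega_{R}}(\varphi_{R}(X_{R}))$ from above.

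For this I would repeat the algebraic part of the proof of Proposition~\ref{prop:jr-high-on-boundary-open}: expanding $J_{\Omega_{R}}(\varphi_{R}(X_{R}))$, using that each $\overline{V}_{i}$ solves \eqref{V}, integrating by parts, invoking \eqref{eq:22} together with the estimates \eqref{eq:45} and \eqref{eq:36}, controlling the double and triple interaction integrals through Lemmas~\ref{lem:chains}, \ref{lem:interaction-exponential} and \ref{decayU}, and finally replacing $\overline{W}_{i}$ by $\overline{U}_{i}$ inside the interaction integrals via \eqref{inter}. This yields
\[
J_{\Omega_{R}}(\varphi_{R}(X_{R}))=\sum_{i=1}^{n}J_{x_{R,i}+A_{x_{R,i}}^{-1}\mathbb{L}}(\overline{W}_{i})-\tfrac{1}{2}\sum_{i\neq j}\int_{\Omega_{R}}f(\overline{U}_{i})\overline{U}_{j}+o(\mathrm{e}^{-\mu g_{1}(R)})+O(R^{-2/3})
\]
as $R\to\infty$.

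It then remains to bound the two sums. For the single-bump terms, after the usual change of coordinates $\overline{W}_{i}$ becomes $\widetilde{U}_{a,b}^{\pm}$ with $a,b\geq cR$, and, exactly as in the derivation of \eqref{ener}, a second order Taylor expansion of $J_{\mathbb{L}}$ at the critical point $U^{\pm}$ together with the boundedness of $J_{\mathbb{L}}''$ on bounded sets and estimate \eqref{eq:44} gives $\lvert J_{x_{R,i}+A_{x_{R,i}}^{-1}\mathbb{L}}(\overline{W}_{i})-J_{\mathbb{L}}(U^{\pm})\rvert=O(\lVert U^{\pm}-\widetilde{U}_{a,b}^{\pm}\rVert_{H_{0}^{1}(\mathbb{L})}^{2})=O(\mathrm{e}^{-2\mu cR})$, so that $\sum_{i}J_{x_{R,i}+A_{x_{R,i}}^{-1}\mathbb{L}}(\overline{W}_{i})=E_{n}+O(\mathrm{e}^{-2\mu cR})$. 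For the interaction terms I would fix $\varepsilon\in(0,\mu)$ and use that $f(\overline{U}_{i})=O(\mathrm{e}^{-p_{1}\mu\lvert\cdot-x_{R,i}\rvert})$ by (H3) and Lemma~\ref{decayU}, so that Lemma~\ref{lem:interaction-exponential} yields $\int_{\Omega_{R}}\lvert f(\overline{U}_{i})\overline{U}_{j}\rvert\leq C\mathrm{e}^{-(\mu-\varepsilon)\lvert x_{R,i}-x_{R,j}\rvert}\leq C\mathrm{e}^{-(\mu-\varepsilon)\delta R}$ for $i\neq j$. Since $g_{1}(R)=o(R)$, both $O(\mathrm{e}^{-2\mu cR})$ and $O(\mathrm{e}^{-(\mu-\varepsilon)\delta R})$ are $o(\mathrm{e}^{-\mu g_{1}(R)})$, hence $J_{\Omega_{R}}(\varphi_{R}(X_{R}))=E_{n}+o(\mathrm{e}^{-\mu g_{1}(R)})+O(R^{-2/3})$, which gives the asserted bound. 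The only step that is genuinely new compared to the closed-tube case is the two-sided control of the single-bump energies near the ends, so I expect no serious obstacle; the rest is a transcription of arguments already in place.
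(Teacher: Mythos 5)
Your proposal is correct and is essentially the paper's own argument: the paper proves this proposition by repeating the closed-tube proof of Proposition~\ref{prop:jr-low-in-interior} with a fixed chain $x_{R,i}=R\gamma(t_i)$, using precisely the observation you make that dist$(x_{R,i},\partial\Gamma_R)\geq cR$, so the end corrections $O(\mathrm{e}^{-2\mu cR})$ (two-sided, via the Taylor expansion behind \eqref{ener} and \eqref{eq:44}) and the interactions $O(\mathrm{e}^{-(\mu-\varepsilon)\delta R})$ are both $o(\mathrm{e}^{-\mu g_1(R)})$. Your write-up just spells out the details the paper leaves implicit.
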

\begin{proof}
The proof is similar to that of Proposition \ref{prop:jr-low-in-interior},
this time taking into account that dist$(x_{i},\partial\Gamma_{R})\geq CR$ for
some $C>0$ and every $R$ and $i.$
\end{proof}
\section{Proof of the Main Results}
\label{sec:finite-dimens-reduct}
\subsection{The Finite Dimensional Reduction}
\label{sec:finite-dimens-reduct-1}
Let $\mathcal{U}_{2,R}$ and $\varphi_{R}:\mathcal{U}_{2,R}\rightarrow
H_{0}^{1}(\Omega_{R})$ be as in (\ref{Uclosed}) and (\ref{immersion}) when
$\Gamma$ is a closed curve and as in (\ref{Uopen}) and (\ref{phiopen}) if
$\gamma(0)\neq\gamma(1).$ Set
\begin{equation*}
\Sigma_{R}:=\varphi_{R}(\mathcal{U}_{2,R}).
\end{equation*}
\begin{lemma}
\label{lem:phi-immersion} $\Sigma_{R}$ is a finite dimensional $C^{2}
$-submanifold of $H_{0}^{1}(\Omega_{R})$.
\end{lemma}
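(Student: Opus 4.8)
The plan is to realize $\Sigma_R$ as the image of a $C^2$-immersion that is, near each of its points, a homeomorphism onto its image. Since $\gamma$ has no self-intersections, $\Gamma_R$ is a one-dimensional $C^2$-submanifold of $\mathbb{R}^N$ --- a circle if $\gamma(0)=\gamma(1)$, an open arc otherwise --- so $(\Gamma_R)^n$ is an $n$-dimensional $C^2$-manifold and $\mathcal{U}_{2,R}$, being an open subset of it, is too. Hence it suffices to prove: \emph{(a)} $\varphi_R\in C^2(\mathcal{U}_{2,R},H_0^1(\Omega_R))$; \emph{(b)} $\mathrm D\varphi_R(X)$ is injective for every $X\in\mathcal{U}_{2,R}$ and every large $R$; and \emph{(c)} $\varphi_R$ is, near each point, a homeomorphism onto its image, and the resulting local pieces of $\Sigma_R$ patch together. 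Together these imply that $\Sigma_R$ is an embedded $C^2$-submanifold of $H_0^1(\Omega_R)$ of dimension $n$.

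For \emph{(a)}, since $\varphi_R(X)$ is a finite sum of the $\overline V_i$ and the $i$-th summand depends only on $x_i$, it is enough to check that $x\mapsto V_{x,R}^{\pm}$ is a $C^2$ map from $\Gamma_R$ to $H_0^1(\Omega_R)$. I would do this by freezing the geometry: for $x$ near a fixed $x_0$, build --- using the diffeomorphism $\Phi_R$ from the proof of Lemma~\ref{lem:existence-positive-supersolution} and the $C^2$-dependence of $A_x$ on $x$, which comes from $\gamma\in C^3$ --- a family $\{\Lambda_x\}$ of diffeomorphisms depending smoothly on $x$, with $\Lambda_{x_0}=\mathrm{id}$, mapping the fixed domain $\Omega_R\cap\mathbb{L}_{x_0}$ onto $\Omega_R\cap\mathbb{L}_x$. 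Pulling back \eqref{V} by $\Lambda_x$ yields, on the fixed domain, a uniformly elliptic Dirichlet problem with coefficients depending smoothly on $x$ and right-hand side the transport of $f\circ U_{x,R}^{\pm}$. Here \eqref{eq:50} ($f(0)=f'(0)=0$) is essential: it makes the zero-extension of $f\circ U^{\pm}$ a function whose gradient vanishes on $\partial\mathbb{L}$, so that $x\mapsto f(U_{x,R}^{\pm})$ is $C^2$ into $H^{-1}$; together with (H2)--(H3) and the fact that the transformed operator is boundedly invertible for large $R$ (its bottom eigenvalue plus $\lambda$ stays positive, as noted before \eqref{V}), elliptic regularity shows the solution is $C^2$ in $x$. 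Transferring back proves \emph{(a)}. I expect this step --- handling simultaneously the moving domain $\Omega_R\cap\mathbb{L}_x$ and the moving frame $A_x$ --- to be the main obstacle.

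For \emph{(b)}, since only the $i$-th summand of $\varphi_R$ involves $x_i$, one has $\mathrm D\varphi_R(X)[\dot X]=\sum_{i=1}^n\mathrm D_{x_i}\overline V_i[\dot x_i]$, with $\dot x_i$ in the one-dimensional space $T_{x_i}\Gamma_R$. Differentiating \eqref{V} in $x$ and using Lemma~\ref{lem:u-v-est} (and, in the open-end case, Lemma~\ref{lem:w-v-est-open}), one checks that $\mathrm D_{x_i}\overline V_i[\dot x_i]$ equals, up to terms that are $o(1)$ (the frame-rotation contribution is $O(1/R)$, and $f'(0)=0$ keeps the moving-boundary contribution lower order), the quantity $-\lvert\dot x_i\rvert$ times the profile $\partial_\xi U^{\pm}$ transported to $x_i$ by $A_{x_i}^{-1}$. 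Now $\partial_\xi U^{\pm}\not\equiv 0$ (because $U^{\pm}$ is even and strictly decreasing in $\lvert\xi\rvert$) and vanishes on $\partial\mathbb{L}$, so $c_0:=\lVert\partial_\xi U^{\pm}\rVert_{H^1}>0$ and the transported zero-extension lies in $H_0^1$. Since $\lvert x_i-x_j\rvert\ge g_2(R)\to\infty$, the exponential decay of $U^{\pm}$ and Lemma~\ref{lem:interaction-exponential} make these $n$ transported profiles asymptotically orthogonal in $H^1$, whence $\lVert\mathrm D\varphi_R(X)[\dot X]\rVert_{H^1(\Omega_R)}^2\ge\tfrac{c_0^2}{2}\sum_i\lvert\dot x_i\rvert^2$ for large $R$, uniformly in $X$. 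In particular $\mathrm D\varphi_R(X)$ is injective and $\varphi_R$ is an immersion.

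For \emph{(c)}, an immersion is locally an embedding, so every point of $\Sigma_R$ has a neighborhood in which $\Sigma_R$ is an embedded $n$-dimensional $C^2$-submanifold; it remains to rule out bad self-intersections. By Lemma~\ref{lem:decay-V} and \eqref{eq:13} (resp.\ \eqref{eq:36}), for large $R$ the positive part of $\varphi_R(X)$ concentrates in small balls around the $x_i$ with $i$ odd and the negative part around the $x_i$ with $i$ even; reading these points in the order induced by $\gamma$ recovers $X$ uniquely in the open-end case, and up to an \emph{even} cyclic shift in the closed case (an odd shift interchanges the $+$ and $-$ profiles, hence changes $\varphi_R(X)$, since $U^{+}\ne-U^{-}$ in general). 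Thus $\varphi_R$ is injective when $\gamma(0)\ne\gamma(1)$, while in the closed case its fibers are orbits of the even cyclic shifts, under which $\varphi_R$ is invariant, so the finitely many local sheets of $\Sigma_R$ through a common point coincide. Finally $\varphi_R$ is proper onto $\Sigma_R$: if $X_m\to X_\infty\in\partial\mathcal{U}_{2,R}$ then $\varphi_R(X_m)$ converges to the value of the same expression at $X_\infty$, which by the peak analysis (valid on a neighborhood of $\overline{\mathcal{U}_{2,R}}$) is not of the form $\varphi_R(X)$ with $X\in\mathcal{U}_{2,R}$; hence $\varphi_R$ is a homeomorphism onto its image. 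Combining \emph{(a)}--\emph{(c)}, $\Sigma_R$ is a finite-dimensional $C^2$-submanifold of $H_0^1(\Omega_R)$.
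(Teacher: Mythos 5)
Your proof is correct and takes essentially the same route as the paper, whose own argument merely asserts that $\varphi_{R}$ is a $C^{2}$-immersion and then observes that it is injective when $\partial\Gamma\neq\emptyset$ or $n\leq2$, while in the closed case with $n\geq4$ its fibers are exactly the orbits of the shifts $x_{i}\mapsto x_{i+2}$, which act freely and leave $\varphi_{R}$ invariant, so $\Sigma_{R}$ is a submanifold. Your steps (a)--(c) simply supply the details (smooth dependence of $V_{x,R}^{\pm}$ on $x$, asymptotic orthogonality of the transported profiles $\partial_{\xi}U^{\pm}$ for injectivity of the differential, and the peak-location argument identifying the fibers with even cyclic shifts) that the paper compresses into ``it is easy to see''.
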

\begin{proof}
It is easy to see that the map $\varphi_{R}$ is a $C^{2}$-immersion. If
$\partial\Gamma\neq\emptyset$ or $n\leq2$ then $\varphi_{R}$ is injective, and
hence $\Sigma_{R}$ is a submanifold of $H_{0}^{1}(\Omega_{R})$. On the other
hand, if $\partial\Gamma=\emptyset$ and $n\geq4$ then $\varphi_{R}$ is not
injective: two points in $\mathcal{U}_{2,R}$ have the same image under
$\varphi_{R}$ if and only if one of them is obtained from the other after a
finite number of shifts of the form $x_{i}\mapsto x_{i+2}$. Since the group of
permutations acts freely on $\mathcal{U}_{2,R},$ $\Sigma_{R}$ is a submanifold
of $H_{0}^{1}(\Omega_{R})$ also in this case.
\end{proof}
We shall reduce the problem of finding a critical point of $J_{\Omega_{R}}$ to
that of finding a critical point of a function $G_{R}:\Sigma_{R}
\rightarrow\mathbb{R}$, which will be defined below.
For $u\in\Sigma_{R}$ we denote by $T_{u}\Sigma_{R}$ the tangent space to
$\Sigma_{R}$ at $u$, by $T_{u}^{\perp}\Sigma_{R}$ its orthogonal complement in
$H_{0}^{1}(\Omega_{R})$ and by $P_{u,R}^{\perp}:H_{0}^{1}(\Omega
_{R})\rightarrow T_{u}^{\perp}\Sigma_{R}$ the orthogonal projection. We
consider $\mathrm{D}^{2}J_{\Omega_{R}}(u)$ as the derivative of the gradient
vector field $\nabla J_{\Omega_{R}}:H_{0}^{1}(\Omega_{R})\rightarrow H_{0}
^{1}(\Omega_{R})$ at $u,$ and define
\begin{equation*}
L_{u,R}:=P_{u,R}^{\perp}\mathrm{D}^{2}J_{\Omega_{R}}(u)|_{T_{u}^{\perp}
\Sigma_{R}}:T_{u}^{\perp}\Sigma_{R}\rightarrow T_{u}^{\perp}\Sigma_{R}.
\end{equation*}
We write $\mathcal{L}(T_{u}^{\perp}\Sigma_{R})$ for the space of bounded
linear operators from $T_{u}^{\perp}\Sigma_{R}$ into itself.
\begin{lemma}
\label{prop:lyapunov-inverse-estimate} If $R$ is large enough and $u\in
\Sigma_{R}$, then $L_{u,R}$ is invertible in $\mathcal{L}(T_{u}^{\perp}
\Sigma_{R})$ and
\begin{equation*}
\limsup_{R\rightarrow\infty}\sup_{u\in\Sigma_{R}}\left\Vert L_{u,R}
^{-1}\right\Vert _{\mathcal{L}(T_{u}^{\perp}\Sigma_{R})}<\infty.
\end{equation*}
\end{lemma}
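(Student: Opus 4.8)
The strategy is the standard one in Lyapunov--Schmidt reductions: one shows that $L_{u,R}$ is a small perturbation of an operator that is uniformly invertible, namely (a projected version of) the linearization at the limit solutions $U^{\pm}$, which is nondegenerate by (H5). First I would decompose $T_u^{\perp}\Sigma_R$ and analyze the quadratic form $v\mapsto \mathrm{D}^2 J_{\Omega_R}(u)[v,v] = \int_{\Omega_R}(|\nabla v|^2+\lambda v^2) - \int_{\Omega_R} f'(u)v^2$ on it. The key point is that, modulo terms that tend to zero as $R\to\infty$ (uniformly in $u\in\Sigma_R$), the potential $f'(u)$ looks, near each bump $x_i$, like $f'(U^{\pm})$ in a translated copy of $\mathbb{L}$, while the bumps are far apart (by Lemma~\ref{lem:chains}, all mutual distances are at least of order $g_2(R)\to\infty$). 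So the leading-order operator splits into a direct sum of copies of the limit linearization, one per bump.

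The concrete steps: (1) Using the decay estimates of Lemma~\ref{lem:decay-V} and the interaction estimates of Lemmas~\ref{lem:interaction-exponential} and \ref{lem:splitting-f}, together with the closeness of $V^{\pm}_{x,R}$ to $U^{\pm}_{x,R}$ (resp.\ $W^{\pm}_{x,R}$) from Lemmas~\ref{lem:u-v-est} and \ref{lem:w-v-est-open}, show that $f'(\varphi_R(X)) = \sum_i f'(\overline U_i) + (\text{error})$ where the error has small $L^{N/2}$-type norm, hence defines an operator of small norm on $H_0^1$. (2) Establish the uniform invertibility of the model operator: on the orthogonal complement of $T_u\Sigma_R$, the quadratic form $\int(|\nabla v|^2+\lambda v^2) - \int \sum_i f'(\overline U_i)v^2$ is uniformly bounded below in absolute value, i.e.\ $|\mathrm{D}^2 J(u)[v,v]|\geq c\|v\|^2$ for some $c>0$ independent of $R$ and $u$. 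This is where (H5) enters: because the kernel of $-\Delta+\lambda-f'(U^{\pm})$ on $H_0^1(\mathbb{L})$ is exactly one-dimensional (spanned by $\partial_\xi U^{\pm}$), the form is nondegenerate on the orthogonal complement of that kernel, and $T_u\Sigma_R$ is spanned (to leading order) precisely by the translates of those kernel elements $\partial_\xi U^{\pm}$ at the various bump locations. (3) Combine: $L_{u,R}$ equals the model operator plus a uniformly small perturbation, so for $R$ large it is invertible with uniformly bounded inverse; the $\limsup$ statement follows.

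The main obstacle is step (2), the uniform lower bound on the model quadratic form on $(T_u\Sigma_R)^{\perp}$. One must rule out, by contradiction, a sequence $v_R\in (T_{u_R}\Sigma_{R})^{\perp}$ with $\|v_R\|=1$ and $\mathrm{D}^2 J(u_R)[v_R,v_R]\to 0$. Using concentration-compactness / a partition of unity subordinate to the bumps, one extracts (after translating to each bump $x_{R,i}$ and passing to a limit) a function $v^{(i)}\in H_0^1(\mathbb{L})$ lying in the kernel of $-\Delta+\lambda-f'(U^{\pm})$ and, by the orthogonality to $T_u\Sigma_R$, orthogonal to $\partial_\xi U^{\pm}$; nondegeneracy (H5) forces $v^{(i)}=0$ for all $i$, and one must then show no mass escapes, contradicting $\|v_R\|=1$. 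The bookkeeping here — controlling the cross terms between distinct bumps via Lemma~\ref{lem:interaction-exponential}, handling the domain perturbation $\Omega_R$ versus $\mathbb{L}$ via the estimates in Section~\ref{sec:energy-estimates}, and checking that the tangent space $T_u\Sigma_R$ is indeed asymptotically spanned by the $\partial_\xi$-translates — is the technical heart of the argument, but each ingredient is already available from the lemmas above.
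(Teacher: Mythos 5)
Your plan is the standard Lyapunov--Schmidt invertibility argument: view $L_{u,R}$ as a uniformly small perturbation of the direct sum of the limit linearizations at the well-separated bumps, and prove uniform coercivity of that model operator on $(T_u\Sigma_R)^{\perp}$ by a contradiction/concentration argument in which the nondegeneracy hypothesis (H5) kills the local limits and the positivity of $-\Delta+\lambda$ away from the bumps prevents loss of mass. This is exactly the approach the paper relies on; it gives no details itself and simply cites the standard proof (Lemma~3.8(v) of \cite{MR2216902}), so your proposal is correct and in line with the paper.
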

\begin{proof}
  The proof of this fact is standard, see for example Lemma~3.8(v) in \cite{MR2216902}.
\end{proof}
\begin{lemma}
\label{lem:existence-reduction} There exist $r_{0}>0$ and $R_{1}\geq1$ such
that for $R\geq R_{1}$ and for every $u\in\Sigma_{R}$ there is a unique
$v_{u}\in u+T_{u}^{\perp}\Sigma_{R}$ which satisfies $\lVert u-v_{u}
\rVert_{H_{0}^{1}(\Omega_{R})}<r_{0}$ and $P_{u,R}^{\perp}\nabla J_{\Omega
_{R}}(v_{u})=0$. The estimates
\begin{equation}
\lVert u-v_{u}\rVert_{H_{0}^{1}(\Omega_{R})}=O(\lVert\nabla J_{\Omega_{R}
}(u)\rVert_{H_{0}^{1}(\Omega_{R})}) \label{eq:30}
\end{equation}
and
\begin{equation}
\lvert J_{\Omega_{R}}(u)-J_{\Omega_{R}}(v_{u})\rvert=O(\lVert\nabla
J_{\Omega_{R}}(u)\rVert_{H_{0}^{1}(\Omega_{R})}^{2}) \label{eq:31}
\end{equation}
hold true as $R\rightarrow\infty$, independently of $u\in\Sigma_{R}$.
Moreover, the operator
\begin{equation*}
P_{u,R}^{\perp}\mathrm{D}^{2}J_{\Omega_{R}}(v_{u})|_{T_{u}^{\perp}\Sigma_{R}
}:T_{u}^{\perp}\Sigma_{R}\rightarrow T_{u}^{\perp}\Sigma_{R}
\end{equation*}
is invertible in $\mathcal{L}(T_{u}^{\perp}\Sigma_{R})$.
\end{lemma}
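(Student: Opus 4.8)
The plan is to solve $P_{u,R}^{\perp}\nabla J_{\Omega_{R}}(u+w)=0$ for $w$ in a small ball of $T_{u}^{\perp}\Sigma_{R}$ by the contraction mapping principle, i.e.\ the standard Lyapunov--Schmidt argument. Since $L_{u,R}w=P_{u,R}^{\perp}\mathrm{D}^{2}J_{\Omega_{R}}(u)w$ for every $w\in T_{u}^{\perp}\Sigma_{R}$ (this uses that $\Sigma_{R}$ is a $C^{2}$-submanifold, Lemma~\ref{lem:phi-immersion}), the above equation is equivalent to the fixed point problem $w=\mathcal{T}_{u,R}(w)$, where
\[
\mathcal{T}_{u,R}(w):=-L_{u,R}^{-1}\Bigl(P_{u,R}^{\perp}\nabla J_{\Omega_{R}}(u)+P_{u,R}^{\perp}\bigl[\nabla J_{\Omega_{R}}(u+w)-\nabla J_{\Omega_{R}}(u)-\mathrm{D}^{2}J_{\Omega_{R}}(u)w\bigr]\Bigr).
\]
Three uniform ingredients drive the argument. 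First, Lemma~\ref{prop:lyapunov-inverse-estimate} provides a constant $C_{0}$ with $\lVert L_{u,R}^{-1}\rVert\le C_{0}$ for all $u\in\Sigma_{R}$ and $R$ large. Second, $\varphi_{R}(X)$ is bounded in $H^{1}(\mathbb{R}^{N})$ uniformly in $X$ and $R$ (immediate from the pointwise bounds of Lemma~\ref{lem:decay-V}), so all the points $u+w$ under consideration lie in a fixed ball of $H_{0}^{1}(\Omega_{R})$, on which $\mathrm{D}^{2}J_{\Omega_{R}}$ is H\"older continuous with an exponent $\theta\in(0,1]$ and constant that can be taken uniform in $R$; hence the bracketed remainder is $O(\lVert w\rVert^{1+\theta})$ and is $\tfrac12$-Lipschitz in $w$ once $\lVert w\rVert\le r_{0}$ with $r_{0}$ small. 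Third, writing $\rho_{R}:=\sup_{X}\lVert\nabla J_{\Omega_{R}}(\varphi_{R}(X))\rVert$, Proposition~\ref{prop:lyapunov-gradient-estimate} in the closed case, respectively Proposition~\ref{prop:lyapunov-gradient-estimate-open} in the open-end case, gives $\rho_{R}\to0$ as $R\to\infty$.

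Given these, for $r_{0}$ small and $R$ large $\mathcal{T}_{u,R}$ maps $\overline{B_{r_{0}}(0)}\subset T_{u}^{\perp}\Sigma_{R}$ into itself and is a contraction there, with all constants independent of $u\in\Sigma_{R}$; its unique fixed point $w_{u}$ yields $v_{u}:=u+w_{u}$. Feeding $w_{u}=\mathcal{T}_{u,R}(w_{u})$ back into the estimate gives $\lVert w_{u}\rVert\le C_{0}\rho_{R}+\tfrac12\lVert w_{u}\rVert$, hence \eqref{eq:30}. For \eqref{eq:31} I would Taylor expand, $J_{\Omega_{R}}(v_{u})=J_{\Omega_{R}}(u)+\langle\nabla J_{\Omega_{R}}(u),w_{u}\rangle+O(\lVert w_{u}\rVert^{2})$, and note that $\langle\nabla J_{\Omega_{R}}(u),w_{u}\rangle=\langle P_{u,R}^{\perp}\nabla J_{\Omega_{R}}(u),w_{u}\rangle=O(\rho_{R}^{2})$ since $w_{u}\in T_{u}^{\perp}\Sigma_{R}$. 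Finally, invertibility of $P_{u,R}^{\perp}\mathrm{D}^{2}J_{\Omega_{R}}(v_{u})|_{T_{u}^{\perp}\Sigma_{R}}$ follows from a Neumann series: this operator differs from $L_{u,R}$ in operator norm by at most $\lVert\mathrm{D}^{2}J_{\Omega_{R}}(v_{u})-\mathrm{D}^{2}J_{\Omega_{R}}(u)\rVert\le C\lVert v_{u}-u\rVert^{\theta}=O(\rho_{R}^{\theta})$, which is smaller than $1/(2C_{0})$ for $R$ large, so the perturbed operator is invertible with uniformly bounded inverse.

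The hard part is not conceptual but lies entirely in the uniformity: every constant -- $C_{0}$, the H\"older data of $\mathrm{D}^{2}J_{\Omega_{R}}$, the radius $r_{0}$, and the implied constants in the $O(\cdot)$ -- must be chosen independently of $u\in\Sigma_{R}$ and of $R$ for $R$ large. This is exactly what the uniform statements of Lemma~\ref{prop:lyapunov-inverse-estimate}, the uniform $H^{1}$-boundedness of $\varphi_{R}(X)$, the (uniform-in-$R$) H\"older continuity of $\mathrm{D}^{2}J_{\Omega_{R}}$ on bounded sets, and the uniform decay $\rho_{R}\to0$ are designed to provide; the only thing to check is that the order in which they enter respects this uniformity.
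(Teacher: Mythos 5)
Your proposal is correct and follows essentially the same route as the paper: your fixed-point map $\mathcal{T}_{u,R}$ coincides with the paper's $g(w)=w-L_{u,R}^{-1}P_{u,R}^{\perp}\nabla J_{\Omega_{R}}(u+w)$, and the uniform ingredients are used identically -- Lemma~\ref{prop:lyapunov-inverse-estimate} for $\lVert L_{u,R}^{-1}\rVert$, the uniform H\"older bound on $\mathrm{D}^{2}J_{\Omega_{R}}$ from (H3), Propositions~\ref{prop:lyapunov-gradient-estimate} and \ref{prop:lyapunov-gradient-estimate-open} for the smallness of the gradient, and a Neumann-series perturbation for the final invertibility, which the paper folds into the choice of $r_{0}$. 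The only cosmetic differences are that you Taylor-expand about $u$ (using $w_{u}\in T_{u}^{\perp}\Sigma_{R}$) while the paper expands about $v_{u}$ (using $\mathrm{D}J_{\Omega_{R}}(v_{u})[w_{u}]=0$), and that in the feedback step you should insert the pointwise quantity $\lVert\nabla J_{\Omega_{R}}(u)\rVert$ rather than its supremum $\rho_{R}$ so as to obtain \eqref{eq:30} and \eqref{eq:31} as stated.
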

\begin{proof}
Along this proof $B_{r}Z$ will denote the open ball of radius $r$ centered at
$0$ in a normed space $Z$, and $\overline{B}_{r}Z$ will denote its closure.
By Lemma~\ref{prop:lyapunov-inverse-estimate} we may fix $M\geq1$ satisfying
\begin{equation}
M>\limsup_{R\rightarrow\infty}\sup_{u\in\Sigma_{R}}\left\Vert L_{u,R}
^{-1}\right\Vert _{\mathcal{L}(T_{u}^{\perp}\Sigma_{R})}. \label{eq:119}
\end{equation}
Clearly,
\begin{equation*}
C_{0}:=\limsup_{R\rightarrow\infty}\sup_{u\in\Sigma_{R}}\left\Vert
u\right\Vert _{H_{0}^{1}(\Omega_{R})}<\infty.
\end{equation*}
Condition~(H3) yields
\begin{equation}
\limsup_{R\rightarrow\infty}\left\Vert J_{\Omega_{R}}\right\Vert
_{C^{2,\bar{\alpha}}(B_{2C_{0}}H_{0}^{1}(\Omega_{R}))}<\infty\label{eq:8}
\end{equation}
for some $\bar{\alpha}\in(0,1]$.
By Lemma~\ref{prop:lyapunov-inverse-estimate} and \eqref{eq:8} there is
$r_{0}>0$ such that for $R$ large enough
\begin{equation}
\left\Vert \mathrm{D}^{2}J_{\Omega_{R}}(u)-\mathrm{D}^{2}J_{\Omega_{R}
}(v)\right\Vert _{\mathcal{L}(H_{0}^{1}(\Omega_{R}))}\leq\frac{1}{2M}
\label{eq:32}
\end{equation}
and $P_{u,R}^{\perp}\mathrm{D}^{2}J_{\Omega_{R}}(v)|_{T_{u}^{\perp}\Sigma_{R}
}$ is invertible in $\mathcal{L}(T_{u}^{\perp}\Sigma_{R}),$ for every
$u\in\Sigma_{R}$ and $v\in H_{0}^{1}(\Omega_{R})$ with $\left\Vert
u-v\right\Vert _{H_{0}^{1}(\Omega_{R})}\leq r_{0}$. Moreover, for $R$ large
enough,
\begin{equation}
\sup_{u\in\Sigma_{R}}\lVert\nabla J_{\Omega_{R}}(u)\rVert_{H_{0}^{1}
(\Omega_{R})}\leq\frac{r_{0}}{2M}. \label{eq:7}
\end{equation}
because of Propositions \ref{prop:lyapunov-gradient-estimate} and
\ref{prop:lyapunov-gradient-estimate-open}. Fix $u\in\Sigma_{R}$ and define
$g\colon T_{u}^{\perp}\Sigma_{R}\rightarrow T_{u}^{\perp}\Sigma_{R}$ by
\begin{equation*}
g(w):=w-L_{u,R}^{-1}P_{u,R}^{\perp}\nabla J_{\Omega_{R}}(u+w).
\end{equation*}
If $w\in\overline{B}_{r_{0}}T_{u}^{\perp}\Sigma_{R}$ it follows from
\eqref{eq:32} and \eqref{eq:7} that
\begin{align}
\left\Vert g(w)\right\Vert  &  \leq M\left\Vert \mathrm{D}^{2}J_{\Omega_{R}
}(u)w-\nabla J_{\Omega_{R}}(u+w)\right\Vert \nonumber\\
&  =M\left\Vert -\nabla J_{\Omega_{R}}(u)-\int_{0}^{1}(\mathrm{D}^{2}
J_{\Omega_{R}}(u+tw)-\mathrm{D}^{2}J_{\Omega_{R}}(u))w\text{ \textrm{d}
}t\right\Vert \label{eq:6}\\
&  \leq M\left(  \left\Vert \nabla J_{\Omega_{R}}(u)\right\Vert +\frac
{\left\Vert w\right\Vert }{2M}\right)  \leq r_{0}.\nonumber
\end{align}
Hence, $g$ maps $\overline{B}_{r_{0}}T_{u}^{\perp}\Sigma_{R}$ into itself.
Moreover, by \eqref{eq:119} and \eqref{eq:32} we have
\begin{equation*}
\left\Vert g^{\prime}(w)\right\Vert \leq\left\Vert L_{u,R}^{-1}\right\Vert
\left\Vert \mathrm{D}^{2}J_{\Omega_{R}}(u)-\mathrm{D}^{2}J_{\Omega_{R}
}(u+w)\right\Vert \leq\frac{1}{2}.
\end{equation*}
Therefore, $g$ is a contraction on $\overline{B}_{r_{0}}T_{u}^{\perp}
\Sigma_{R}$ and by Banach's fixed point theorem $g$ has a unique fixed point
$w_{u}\in\overline{B}_{r_{0}}T_{u}^{\perp}\Sigma_{R}$. Thus, $v_{u}:=u+w_{u}$
is then the only zero of $P_{u,R}^{\perp}\nabla J_{\Omega_{R}}$ in
$u+\overline{B}_{r_{0}}T_{u}^{\perp}\Sigma_{R}.$
Inequality \eqref{eq:6} with $w:=w_{u}=g(w_{u})$ yields $\left\Vert
w_{u}\right\Vert \leq2M\left\Vert \nabla J_{\Omega_{R}}(u)\right\Vert $ and
hence \eqref{eq:30}. Moreover, since $\mathrm{D}J_{\Omega_{R}}(v_{u}
)[w_{u}]=0,$
\begin{align}
&  \lvert J_{\Omega_{R}}(u)-J_{\Omega_{R}}(v_{u})\rvert\label{eq:34}\\
&  \leq\left\vert \mathrm{D}J_{\Omega_{R}}(v_{u})[w_{u}]\right\vert +\int
_{0}^{t}(1-t)\left\vert \mathrm{D}^{2}J_{\Omega_{R}}(u+(1-t)w_{u})\left[
w_{u},w_{u}\right]  \right\vert \text{ \textrm{d}}t\nonumber\\
&  \leq C\left\Vert w_{u}\right\Vert ^{2}\nonumber
\end{align}
for some constant $C$ independent of $R$ and $u$. Now \eqref{eq:30} and
\eqref{eq:34} imply \eqref{eq:31}. Finally, if $R$ is large enough,
\eqref{eq:30} implies the strict inequality $\lVert u-v_{u}\rVert_{H_{0}
^{1}(\Omega_{R})}<r_{0}$, as stated in the lemma.
\end{proof}
We now fix $r_{0}$ and $R_{1}$ as in Lemma~\ref{lem:existence-reduction}. If
$R\geq R_{1}$ we define $G_{R}\colon\Sigma_{R}\rightarrow\mathbb{R}$ by
\begin{equation*}
G_{R}(u):=J_{\Omega_{R}}(v_{u}).
\end{equation*}
where $v_{u}$ is given by Lemma~\ref{lem:existence-reduction}.
\begin{proposition}
\label{lem:reduced-problem} For $R\geq R_{1}$ the map $G_{R}$ is in
$C^{1}(\Sigma_{R},\mathbb{R})$. If $u\in\Sigma_{R}$ is a critical point of
$G_{R}$ then $v_{u}$ is a critical point of $J_{\Omega_{R}}$.
\end{proposition}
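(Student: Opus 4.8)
\emph{Plan of proof.} This is the standard endgame of a Lyapunov--Schmidt reduction. In two steps I would: (i) show that the map $u\mapsto v_{u}$ is of class $C^{1}$, whence $G_{R}=J_{\Omega_{R}}\circ v_{\cdot}$ is $C^{1}$ because $J_{\Omega_{R}}$ is $C^{2}$; (ii) use the defining relation $P_{u,R}^{\perp}\nabla J_{\Omega_{R}}(v_{u})=0$ to conclude that at a critical point $u$ of $G_{R}$ the full gradient $\nabla J_{\Omega_{R}}(v_{u})$ vanishes, not just its normal component.

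For (i) I would apply the implicit function theorem to $F\colon\Sigma_{R}\times H_{0}^{1}(\Omega_{R})\to H_{0}^{1}(\Omega_{R})$,
\begin{equation*}
F(u,v):=P_{u,R}^{\perp}\nabla J_{\Omega_{R}}(v)+(\mathrm{Id}-P_{u,R}^{\perp})(v-u).
\end{equation*}
Since $\Sigma_{R}$ is a $C^{2}$-submanifold (Lemma~\ref{lem:phi-immersion}), the projections $u\mapsto P_{u,R}^{\perp}$ are $C^{1}$, and as $J_{\Omega_{R}}\in C^{2}$ the map $F$ is $C^{1}$. For $\lVert v-u\rVert_{H_{0}^{1}(\Omega_{R})}$ small one has $F(u,v)=0$ if and only if $v=v_{u}$ (the two conditions $v-u\in T_{u}^{\perp}\Sigma_{R}$ and $P_{u,R}^{\perp}\nabla J_{\Omega_{R}}(v)=0$), so $F(u,v_{u})=0$. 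The partial differential $\mathrm{D}_{v}F(u,v_{u})[h]=P_{u,R}^{\perp}\mathrm{D}^{2}J_{\Omega_{R}}(v_{u})[h]+(\mathrm{Id}-P_{u,R}^{\perp})h$ is the identity on $T_{u}\Sigma_{R}$ and restricts on $T_{u}^{\perp}\Sigma_{R}$ to the operator $P_{u,R}^{\perp}\mathrm{D}^{2}J_{\Omega_{R}}(v_{u})|_{T_{u}^{\perp}\Sigma_{R}}$, which is invertible by the last assertion of Lemma~\ref{lem:existence-reduction}; since by (H3) $\mathrm{D}^{2}J_{\Omega_{R}}(v_{u})$ is the identity minus a compact operator, $\mathrm{D}_{v}F(u,v_{u})$ is Fredholm of index zero, and being injective it is an isomorphism. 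The implicit function theorem then produces a $C^{1}$ local solution $u'\mapsto v(u')$, which by the uniqueness part of Lemma~\ref{lem:existence-reduction} agrees with $u'\mapsto v_{u'}$. Hence $v_{\cdot}\in C^{1}(\Sigma_{R},H_{0}^{1}(\Omega_{R}))$ and $G_{R}\in C^{1}(\Sigma_{R},\mathbb{R})$.

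For (ii) let $u\in\Sigma_{R}$ be a critical point of $G_{R}$. From $P_{u,R}^{\perp}\nabla J_{\Omega_{R}}(v_{u})=0$ we get $\nabla J_{\Omega_{R}}(v_{u})\in(T_{u}^{\perp}\Sigma_{R})^{\perp}=T_{u}\Sigma_{R}$. For $\zeta\in T_{u}\Sigma_{R}$ the chain rule gives $0=\mathrm{D}G_{R}(u)[\zeta]=\bigl\langle\nabla J_{\Omega_{R}}(v_{u}),\mathrm{D}v_{\cdot}(u)[\zeta]\bigr\rangle_{H_{0}^{1}(\Omega_{R})}$, and writing $w_{u'}:=v_{u'}-u'\in T_{u'}^{\perp}\Sigma_{R}$ we have $\mathrm{D}v_{\cdot}(u)[\zeta]=\zeta+\mathrm{D}w_{\cdot}(u)[\zeta]$. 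Differentiating the relations $\langle w_{u'},e(u')\rangle=0$ along $\Sigma_{R}$, where $e$ ranges over a local $C^{1}$ orthonormal frame of $T\Sigma_{R}$, shows that the $T_{u}\Sigma_{R}$-component of $\mathrm{D}w_{\cdot}(u)[\zeta]$ has norm $\leq C\lVert w_{u}\rVert\,\lVert\zeta\rVert$, with $C$ governed by the second fundamental form of $\Sigma_{R}$, hence bounded uniformly for $R$ large. As $\nabla J_{\Omega_{R}}(v_{u})\in T_{u}\Sigma_{R}$, only that tangential component enters the pairing, so
\begin{equation*}
0=\bigl\langle\nabla J_{\Omega_{R}}(v_{u}),\zeta\bigr\rangle_{H_{0}^{1}(\Omega_{R})}+O\bigl(\lVert w_{u}\rVert\,\lVert\nabla J_{\Omega_{R}}(v_{u})\rVert\,\lVert\zeta\rVert\bigr)\qquad\text{for all }\zeta\in T_{u}\Sigma_{R}.
\end{equation*}
Taking $\zeta:=\nabla J_{\Omega_{R}}(v_{u})$ and recalling that $\lVert w_{u}\rVert=\lVert u-v_{u}\rVert=O(\lVert\nabla J_{\Omega_{R}}(u)\rVert)\to0$ as $R\to\infty$ by \eqref{eq:30} and Propositions~\ref{prop:lyapunov-gradient-estimate} and \ref{prop:lyapunov-gradient-estimate-open}, we obtain $\lVert\nabla J_{\Omega_{R}}(v_{u})\rVert\leq\tfrac{1}{2}\lVert\nabla J_{\Omega_{R}}(v_{u})\rVert$ for $R$ large (enlarging $R_{1}$ if needed), whence $\nabla J_{\Omega_{R}}(v_{u})=0$.

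The one genuinely delicate point is the bookkeeping in step (ii) with the $u$-dependent orthogonal splitting $H_{0}^{1}(\Omega_{R})=T_{u}\Sigma_{R}\oplus T_{u}^{\perp}\Sigma_{R}$: because the normal space varies with $u$, $\mathrm{D}v_{\cdot}(u)[\zeta]$ is not purely tangential, and its tangential part must be absorbed into the small quantity $\lVert w_{u}\rVert$ using bounds on the geometry of $\Sigma_{R}$ that are uniform in $R$. Everything else is routine.
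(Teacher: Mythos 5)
Your proposal is correct in substance and, in part (i), is essentially the argument the paper leaves implicit: the paper merely asserts that $\psi_{R}(u):=v_{u}$ is a $C^{1}$-diffeomorphism onto its image $\tilde{\Sigma}_{R}:=\{v_{u}\mid u\in\Sigma_{R}\}$, and the final clause of Lemma~\ref{lem:existence-reduction} (invertibility of $P_{u,R}^{\perp}\mathrm{D}^{2}J_{\Omega_{R}}(v_{u})|_{T_{u}^{\perp}\Sigma_{R}}$) exists precisely to feed an implicit function theorem argument like yours. One inaccuracy there: $\mathrm{D}_{v}F(u,v_{u})$ is not the identity on $T_{u}\Sigma_{R}$; with respect to $T_{u}\Sigma_{R}\oplus T_{u}^{\perp}\Sigma_{R}$ it is block triangular with diagonal blocks the identity and $P_{u,R}^{\perp}\mathrm{D}^{2}J_{\Omega_{R}}(v_{u})|_{T_{u}^{\perp}\Sigma_{R}}$, which already yields invertibility directly, so the Fredholm detour is unnecessary.

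In part (ii) you take a genuinely different route from the paper, and this is where the one real issue sits. The paper's argument is exact and soft: at a critical point of $G_{R}$ the gradient $\nabla J_{\Omega_{R}}(v_{u})$ annihilates $T_{v_{u}}\tilde{\Sigma}_{R}=\mathrm{Im}\,\mathrm{D}\psi_{R}(u)$ and, by construction of $v_{u}$, also $T_{u}^{\perp}\Sigma_{R}$; since the section of the normal disc bundle is transversal to the fibres, $H_{0}^{1}(\Omega_{R})=T_{v_{u}}\tilde{\Sigma}_{R}\oplus T_{u}^{\perp}\Sigma_{R}$, and $\nabla J_{\Omega_{R}}(v_{u})=0$ follows for every $R\geq R_{1}$ with no smallness argument. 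Your version is quantitative: you absorb the tangential part of $\mathrm{D}w_{\cdot}(u)[\zeta]$ into $C\lVert w_{u}\rVert\lVert\zeta\rVert$ and close the estimate using $C\lVert w_{u}\rVert<1$. This requires a bound on the frame derivatives (equivalently the second fundamental form of $\Sigma_{R}$) that is uniform in $R$, which you assert but do not prove; it amounts to uniform $H^{1}$-bounds on second derivatives of $x\mapsto V_{x,R}^{\pm}$ along $\Gamma_{R}$, which is more than Lemma~\ref{lem:phi-immersion} supplies, and it forces you to enlarge $R_{1}$, so you prove a slightly weaker statement (harmless for the main theorems, but not quite the proposition as stated). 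To be fair, the paper's phrase \emph{transversal to the fibres} tacitly uses the same geometric input (that $r_{0}$ is below the reach of $\Sigma_{R}$), so your computation makes explicit what the paper glosses; but if you replace the quantitative absorption by the exact splitting $H_{0}^{1}(\Omega_{R})=T_{v_{u}}\tilde{\Sigma}_{R}\oplus T_{u}^{\perp}\Sigma_{R}$ and the two annihilation conditions, you recover the paper's cleaner proof and avoid both the unverified uniform curvature bound and the enlargement of $R_{1}$.
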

\begin{proof}
The map $u\mapsto v_{u}$ is a cross section of the normal disc bundle of
radius $r_{0}$ over $\Sigma_{R},$ so its image $\tilde{\Sigma}_{R}
:=\{v_{u}:u\in\Sigma_{R}\}$ is a submanifold which is transversal to the
fibres, that is, $H_{0}^{1}(\Omega)=T_{v_{u}}\tilde{\Sigma}_{R}\oplus
T_{u}^{\perp}\Sigma_{R}.$ The map $\psi_{R}:\Sigma_{R}\rightarrow\tilde
{\Sigma}_{R}$ given by $\psi_{R}(u):=v_{u}$ is a $C^{1}$-diffeomorphism.
Therefore $G_{R}$ is of class $C^{1}$ and, since $\mathrm{D}G_{R}
(u)=\mathrm{D}J_{\Omega_{R}}(v_{u})\circ\mathrm{D}\psi_{R}(u),$ we have that
$\mathrm{D}J_{\Omega_{R}}(v_{u})w=0$ for every $w\in T_{v_{u}}\tilde{\Sigma
}_{R}$ if $u$ is a critical point of $G_{R}.$ But $v_{u}$ was chosen so that
$\mathrm{D}J_{\Omega_{R}}(v_{u})z=0$ for every $z\in T_{u}^{\perp}\Sigma_{R}.$
Hence, $v_{u}$ is a critical point of $J_{\Omega_{R}}$ if $u$ is a critical
point of $G_{R}.$
\end{proof}
\subsection{The proof of Theorems \ref{thm:no-boundary} and
\ref{thm:with-boundary}}
\label{sec:find-locat-crit}
From Propositions \ref{prop:lyapunov-gradient-estimate},
\ref{prop:jr-high-on-boundary} and \ref{prop:jr-low-in-interior} if
$\gamma(0)=\gamma(1)$, or from Propositions
\ref{prop:lyapunov-gradient-estimate-open},
\ref{prop:jr-high-on-boundary-open} and \ref{prop:jr-low-in-interior-open} if
$\gamma(0)\neq\gamma(1)$, and estimate \eqref{eq:31}, we obtain
\begin{align*}
\min_{X\in\partial\mathcal{U}_{1,R}}G_{R}(\varphi_{R}(X))\geq &  E_{n}
+\beta\mathrm{e}^{-\mu g_{1}(R)}+o(\mathrm{e}^{-\mu g_{1}(R)})\\
&  +O(R^{-2/3})+O(\mathrm{e}^{-2\alpha^{\prime}\mu g_{2}(R)})\\
\min_{X\in\overline{\mathcal{U}_{1,R}}}G_{R}(\varphi_{R}(X))\leq &
E_{n}+o(\mathrm{e}^{-\mu g_{1}(R)})+O(R^{-2/3})+O(\mathrm{e}^{-2\alpha
^{\prime}\mu g_{2}(R)}).
\end{align*}
We set
\begin{equation*}
g_{1}(R):=\frac{1}{2\mu}\log R\qquad\text{and}\qquad g_{2}(R):=\left(
\frac{1}{2}+\frac{1}{4\alpha^{\prime}}\right)  g_{1}(R).
\end{equation*}
Since $\alpha^{\prime}>1/2,$ these functions satisfy \eqref{eq:24},
\eqref{eq:10}, and \eqref{eq:25}. Note that
\begin{equation*}
R^{-2/3}=o(\mathrm{e}^{-\mu g_{1}(R)})\qquad\text{and}\qquad\mathrm{e}
^{-2\alpha^{\prime}\mu g_{2}(R)}=o(\mathrm{e}^{-\mu g_{1}(R)}).
\end{equation*}
Therefore,
\begin{equation*}
\min G_{R}(\varphi_{R}(\overline{\mathcal{U}_{1,R}}))<\min G_{R}(\varphi
_{R}(\partial\mathcal{U}_{1,R}))
\end{equation*}
if $R$ is large enough. It follows that $G_{R}$ has a local minimum
$w_{R}:=\varphi_{R}(X_{R})$ in $\varphi_{R}(\mathcal{U}_{1,R})\subset
\Sigma_{R}$. Hence, by Lemma~\ref{lem:reduced-problem}, $u_{R}:=v_{w_{R}}$ is
a critical point of $J_{\Omega_{R}}$.
Moreover, by \eqref{eq:30}, we have that $u_{R}=\varphi_{R}(X_{R})+o(1)$ in
$H_{0}^{1}(\Omega_{R})$ as $R\rightarrow\infty$. This, together with estimates
\eqref{eq:16}, \eqref{eq:28} and \eqref{eq:44}, yields \eqref{eq:4} and \eqref{eq:5}.
Finally, \eqref{eq:10} implies that $\lvert x_{R,i}-x_{R,j}\rvert
\rightarrow\infty$ if $i\neq j$ and that dist$(x_{R,i},\partial\Gamma
_{R})\rightarrow\infty$ for all $i$, as $R\rightarrow\infty$. The proofs of
Theorems \ref{thm:no-boundary} and \ref{thm:with-boundary} are complete.
\ \hfill$\square$
\bibliographystyle{amsplain-abbrv}
\bibliography{aclapa-1}

\def\cprime{$'$} \def\polhk#1{\setbox0=\hbox{#1}{\ooalign{\hidewidth
  \lower1.5ex\hbox{`}\hidewidth\crcr\unhbox0}}}
\providecommand{\bysame}{\leavevmode\hbox to3em{\hrulefill}\thinspace}
\providecommand{\MR}{\relax\ifhmode\unskip\space\fi MR }
\providecommand{\MRhref}[2]{%
  \href{http://www.ams.org/mathscinet-getitem?mr=#1}{#2}
}
\providecommand{\href}[2]{#2}
\begin{thebibliography}{10}

\bibitem{MR2216902}
N.~Ackermann, \emph{A nonlinear superposition principle and multibump solutions
  of periodic {S}chr{\"o}dinger equations}, J. Funct. Anal. \textbf{234}
  (2006), no.~2, 277--320. \MR{MR2216902}

\bibitem{aclapa-2}
N.~Ackermann, M.~Clapp, and F.~Pacella, \emph{Self-focusing multibump standing
  waves in expanding waveguides}, Milan Journal of Mathematics \textbf{79}
  (2011), 221--232.

\bibitem{MR2037264}
T.~Bartsch and T.~Weth, \emph{A note on additional properties of sign changing
  solutions to superlinear elliptic equations}, Topol. Methods Nonlinear Anal.
  \textbf{22} (2003), no.~1, 1--14. \MR{MR2037264 (2005d:35056)}

\bibitem{MR1395408}
H.~Berestycki, L.A. Caffarelli, and L.~Nirenberg, \emph{Inequalities for
  second-order elliptic equations with applications to unbounded domains. {I}},
  Duke Math. J. \textbf{81} (1996), no.~2, 467--494, A celebration of John F.
  Nash, Jr. \MR{MR1395408 (97h:35054)}

\bibitem{MR1039342}
H.~Berestycki and L.~Nirenberg, \emph{Some qualitative properties of solutions
  of semilinear elliptic equations in cylindrical domains}, Analysis, et
  cetera, Academic Press, Boston, MA, 1990, pp.~115--164. \MR{MR1039342
  (91i:35008)}

\bibitem{MR1627654}
A.~Castro, J.~Cossio, and J.M. Neuberger, \emph{A sign-changing solution for a
  superlinear {D}irichlet problem}, Rocky Mountain J. Math. \textbf{27} (1997),
  no.~4, 1041--1053. \MR{MR1627654 (99f:35056)}

\bibitem{MR949628}
E.N. Dancer, \emph{The effect of domain shape on the number of positive
  solutions of certain nonlinear equations}, J. Differential Equations
  \textbf{74} (1988), no.~1, 120--156. \MR{MR949628 (89h:35256)}

\bibitem{MR1072904}
\bysame, \emph{The effect of domain shape on the number of positive solutions
  of certain nonlinear equations. {II}}, J. Differential Equations \textbf{87}
  (1990), no.~2, 316--339. \MR{1072904 (91j:35034)}

\bibitem{MR1962054}
\bysame, \emph{Real analyticity and non-degeneracy}, Math. Ann. \textbf{325}
  (2003), no.~2, 369--392. \MR{1 962 054}

\bibitem{MR1886955}
E.N. Dancer and S.~Yan, \emph{Multibump solutions for an elliptic problem in
  expanding domains}, Comm. Partial Differential Equations \textbf{27} (2002),
  no.~1-2, 23--55. \MR{MR1886955 (2002m:35067)}

\bibitem{MR1156467}
S.J. Fromm, \emph{Potential space estimates for {G}reen potentials in convex
  domains}, Proc. Amer. Math. Soc. \textbf{119} (1993), no.~1, 225--233.
  \MR{MR1156467 (93k:35076)}

\bibitem{MR737190}
D.~Gilbarg and N.S. Trudinger, \emph{Elliptic partial differential equations of
  second order}, second ed., Grundlehren der Mathematischen Wissenschaften
  [Fundamental Principles of Mathematical Sciences], vol. 224, Springer-Verlag,
  Berlin, 1983. \MR{MR737190 (86c:35035)}

\bibitem{MR1573157}
P.G. Spain, \emph{The {F}ermat {P}oint of a {T}riangle}, Math. Mag. \textbf{69}
  (1996), no.~2, 131--133. \MR{MR1573157}

\bibitem{MR0271508}
W.~Walter, \emph{Differential and integral inequalities}, Translated from the
  German by Lisa Rosenblatt and Lawrence Shampine. Ergebnisse der Mathematik
  und ihrer Grenzgebiete, Band 55, Springer-Verlag, New York, 1970.
  \MR{MR0271508 (42 \#6391)}

\bibitem{MR1014685}
W.P. Ziemer, \emph{Weakly differentiable functions}, Graduate Texts in
  Mathematics, vol. 120, Springer-Verlag, New York, 1989, Sobolev spaces and
  functions of bounded variation. \MR{MR1014685 (91e:46046)}

\end{thebibliography}
\end{document}